\title[Higher-dimensional fractal uncertainty principle]{The fractal uncertainty principle via Dolgopyat's method in higher dimensions}
\author{Aidan Backus}
\address{Aidan Backus, Department of Mathematics, Brown University, Providence, RI}
\email{aidan\_backus@brown.edu}
\author{James Leng}
\address{James Leng, Department of Mathematics, University of California, Los Angeles, Los Angeles, CA}
\email{jamesleng@math.ucla.edu}
\author{Zhongkai Tao}
\address{Zhongkai Tao, Department of Mathematics, University of California, Berkeley, Berkeley, CA}
\email{ztao@math.berkeley.edu}
\date{\today}
\newcommand{\NN}{\mathbf{N}}
\newcommand{\ZZ}{\mathbf{Z}}
\newcommand{\RR}{\mathbf{R}}
\newcommand{\CC}{\mathbf{C}}
\newcommand{\Ball}{\mathbf{B}}
\DeclareMathOperator*{\Expect}{\mathbf E}
\DeclareMathOperator*{\Var}{\mathrm{Var}}
\DeclareMathOperator{\Acal}{\mathcal A}
\newcommand*\dif{\mathop{}\!\mathrm{d}}
\DeclareMathOperator{\diam}{diam}
\DeclareMathOperator{\argmax}{arg\,max}
\DeclareMathOperator{\id}{id}
\DeclareMathOperator{\supp}{supp}
\newcommand{\dist}{\mathrm{dist}_\infty}
\newcommand{\dfn}[1]{\emph{#1}\index{#1}}
\renewcommand{\Re}{\operatorname{Re}}
\renewcommand{\Im}{\operatorname{Im}}
\newtheorem{theorem}{Theorem}[section]
\newtheorem{prop}[theorem]{Proposition}
\newtheorem{lemma}[theorem]{Lemma}
\newtheorem{invariant}[theorem]{Invariant}
\newtheorem{proposition}[theorem]{Proposition}
\newtheorem{corollary}[theorem]{Corollary}
\theoremstyle{definition}
\newtheorem{definition}[theorem]{Definition}
\newtheorem{defi}[theorem]{Definition}
\newtheorem{remark}[theorem]{Remark}
\newtheorem{example}[theorem]{Example}
\definecolor{green}{rgb}{0,0.8,0} 
\numberwithin{equation}{section}
\def\XXint#1#2#3{{\setbox0=\hbox{$#1{#2#3}{\int}$ }
\vcenter{\hbox{$#2#3$ }}\kern-.6\wd0}}
\begin{document}
\begin{abstract}
     We prove a fractal uncertainty principle with exponent $\frac{d}{2} - \delta + \varepsilon$, $\varepsilon > 0$, for Ahlfors--David regular subsets of $\RR^d$ with dimension $\delta$ which satisfy a suitable ``nonorthogonality condition". This generalizes the application of Dolgopyat's method by Dyatlov--Jin \cite{Dyatlov_2018} to prove the same result in the special case $d = 1$. As a corollary, we get a quantitative spectral gap for the Laplacian on convex cocompact hyperbolic manifolds of arbitrary dimension with Zariski dense fundamental groups.
\end{abstract}

\subjclass[2020]{28A80, 35B34, 81Q50}
\keywords{fractal uncertainty principle, resonances}

\maketitle



\section{Introduction}
The \dfn{fractal uncertainty principle}, informally, is the assertion that a function cannot be microlocalized to a neighborhood of a fractal set in phase space.
Such assertions have applications in spectral theory, where one can apply microlocal methods to show that fractal uncertainty principles imply the existence of essential spectral gaps \cite{Dyatlov_2016}.
In particular, one can obtain $L^2 \to L^2$ bounds on the scattering resolvents of the Laplacian on convex cocompact hyperbolic manifolds, as well as improvements on the size of the maximal region in which certain zeta functions admit analytic continuation \cite{Bourgain_2018}.

To make the fractal uncertainty principle more precise, we introduce the \dfn{semiclassical Fourier transform}
$$\mathscr F_h f(\xi) := (2\pi h)^{-d/2} \int_{\RR^d} e^{-ix \cdot \xi/h} f(x) \dif x$$
where $h > 0$ is a small parameter.
If we have sets $X, Y$, and we write $X_h, Y_h$ for the sumsets $X_h := X + B_h$, $Y_h := Y + B_h$, $B_h := B(0, h)$, then the fractal uncertainty principle for $X, Y$ asserts bounds of the form 
\begin{equation}\label{FUP dfn}
\|1_{X_h} \mathscr F_h 1_{Y_h}\|_{L^2 \to L^2} \lesssim h^\beta
\end{equation}
in the limit $h \to 0$.
We will be interested in the case that $X, Y$ are Ahlfors--David regular sets:

\begin{definition}
A compactly supported finite Borel measure $\mu$ on $\RR^d$ is \dfn{Ahlfors--David regular} of dimension $\delta \in [0, d]$, on scales $[\alpha, \beta]$, with regularity constant $C_R \geq 1$, if for every closed square box $I$ with side length $r \in [\alpha, \beta]$, or closed ball $I$ with radius $r \in [\alpha, \beta]$,
$$\mu(I) \leq C_R r^\delta,$$
and if in addition $I$ is centered on a point in $X := \supp \mu$,
$$C_R^{-1} r^\delta \leq \mu(I).$$
In short we say that $(X, \mu)$ is \dfn{$\delta$-regular}.
\end{definition}

Applying Plancherel's theorem and H\"older's inequality, one can easily check that if $X$ is $\delta$-regular and $Y$ is $\delta'$-regular on scales $[h, 1]$, then 
\begin{equation}\label{noob bound}
\|1_{X_h} \mathscr F_h 1_{Y_h}\|_{L^2 \to L^2} \lesssim h^{\max\left(0, \frac{d - \delta - \delta'}{2}\right)};
\end{equation}
this estimate is a straightforward modification of \cite[(2.7)]{Dyatlov_2019}.
In fact, (\ref{noob bound}) is sharp if $\delta$ or $\delta'$ are either $0$ or $d$, or if $X, Y$ are orthogonal line segments in $\RR^2$.

Thus we say that $X, Y$ satisfy the \dfn{fractal uncertainty principle} if (\ref{FUP dfn}) holds for some $\beta > \max(0, \frac{d - \delta - \delta'}{2})$.
There are several cases in which the fractal uncertainty principle is known:
\begin{enumerate}
    \item If $d = 1$ and $0 < \delta, \delta' < 1$, then the fractal uncertainty principle holds \cite{Dyatlov_2016, Bourgain_2018, Dyatlov_2018}.
    \item If $d < \delta + \delta' < 2d$, then the fractal uncertainty principle holds under the additional assumption that either $Y$ can be decomposed as a product of Ahlfors--David fractals in $\RR$ \cite{Han_2020} or $Y$ is line-porous \cite{cohen2023fractal}.
    \item If $d$ is odd and $\delta, \delta'$ are very close to $d/2$, then the fractal uncertainty principle holds \cite{Cladek_2020}. 
    \item If $X,Y$ are arithmetic Cantor sets\footnote{We define these fundamental examples in \S\ref{model problem}, but for now the reader may view them as Cantor sets where the removed boxes have rational vertices.}, then the fractal uncertainty principle holds for $d=1$ \cite{Dyatlov_2017} and $d=2$, $\delta + \delta' \geq 1$ under the condition that $X$ does not contain any line \cite{cohen2022fractal}.
\end{enumerate}

\subsection{The main theorem}
In this paper we establish the fractal uncertainty principle for $0 < \delta + \delta' \leq d$ under the following additional hypothesis which rules out the possibility that $X, Y$ are orthogonal line segments.
For $\Phi(x, y) := -x \cdot y$ it is a quantitative form of the statement that ``$X$ and $Y$ do not lie in submanifolds which have orthogonal tangent spaces."

\begin{definition}\label{def:nonorthogonality}
Let $X,Y\subseteq \RR^d$ and let $\Phi \in C^2(\RR^d \times \RR^d)$.
We say that $(X,Y)$ is $\Phi$-\dfn{nonorthogonal} with constant $0 < c_N \leq 1$ from scales $(\alpha_0^X,\alpha_0^Y)$ to $(\alpha_1^X,\alpha_1^Y)$ if for any $x_0\in X$, $y_0\in Y$, and $r_X \in (\alpha_0^X, \alpha_1^X)$ and $r_Y \in (\alpha_0^Y, \alpha_1^Y)$, there exists $x_1, x_2\in X\cap B(x_0,r_X), y_1, y_2\in Y\cap B(y_0,r_Y)$ such that \begin{align}\label{nonorthogonality formula}
    |\Phi(x_1, y_1) - \Phi(x_2, y_1) - \Phi(x_1, y_2) + \Phi(x_2, y_2)| \geq c_N r_X r_Y.
\end{align}
\end{definition}

The motivation for this definition is as follows: we want nonorthogonality to be visible on virtually all scales; after all, orthogonality of fractals is a local property, so we want non-orthogonal examples on most balls centered on a point in $X$ and $Y$. The Ahlfors--David regularity condition guarantees that each such ball contributes roughly the same amount of fractal mass, and is hence the reason why we upgrade ``most" to ``all". At the same time, we don't want nonorthogonal points to lie too close to each other. This is why we take the right hand side to be $r_Xr_Y$ instead of $|x_1 - x_0| \cdot |y_1 - y_0|$. One can verify that this definition of nonorthogonality generalizes the nonorthogonality hypothesis of \cite[Proposition 6.5]{Dyatlov_2019}.

The nonorthogonality condition (\ref{nonorthogonality formula}) is based on the \dfn{local nonintegrability condition} (LNI) of \cite{Naud2005, stoyanov2011spectra}, which itself can be traced back to the \dfn{uniform nonintegrability condition} of \cite{Chernov98, Dolgopyat_98}.
In such papers one is concerned with the nonintegrability of the stable and unstable foliations of an Axiom A (or perhaps even Anosov) flow.
Roughly speaking, given fractals $X, Y$ one may define two laminations (in the sense of Thurston \cite[Chapter 8]{thurston1979geometry}) in $\RR^d_x \times \RR^d_\xi$, the \dfn{vertical lamination} $\{x \in X\}$ and \dfn{horizontal lamination} $\{\xi = \partial_x \Phi(x, y): y \in Y\}$, and then (\ref{nonorthogonality formula}) essentially asserts that the vertical and horizontal laminations satisfy LNI.

\begin{definition}\label{def:doubling measure}
A measure $\mu$ is \dfn{doubling} on scales $[h, 1]$ if there exists $C_D > 0$ such that for every $r \in [h, \frac{1}{2}]$ and every cube $I$ of side length $r$ centered at $x\in \supp\mu$, $\mu(I \cdot 2) \leq C_D \mu(I)$.
\end{definition}

Clearly every regular measure is doubling; we highlight that our main theorem only needs to assume doubling rather than regular. It is essential that we only consider cubes centered at $x\in \supp\mu$ in the definition. One can compare this doubling property with the Federer property in \cite[\S7]{Dolgopyat_98}, in which case the Gibbs measure is supported everywhere.

What follows is our main theorem:

\begin{theorem}\label{main theorem}
Let $\mu_X, \mu_Y$ be doubling probability measures on scales $[h, 1]$ with compact supports $X\subset I_0, Y\subset J_0$ where $I_0, J_0\subset\RR^d$ are rectangular boxes with unit length.
Let $\mathcal B_h$ be the semiclassical Fourier integral operator 
\begin{align}\label{semiclassical FIO definition}
    \mathcal{B}_h f(x)=\int_Y\exp\left(\frac{i\Phi(x,y)}{h}\right) p(x,y)f(y)\dif\mu_Y(y)
\end{align}
where the phase $\Phi \in C^3(I_0 \times J_0)$, $X, Y$ are $\Phi$-nonorthogonal from scales $h$ to $1$, and the symbol $p \in C^1(I_0 \times J_0)$.
Then there exists $\varepsilon_0 > 0$ such that
$$\|\mathcal B_h\|_{L^2(\mu_Y) \to L^2(\mu_X)} \lesssim h^{\varepsilon_0}.$$
\end{theorem}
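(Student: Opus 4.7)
The plan is to adapt the Dolgopyat-style iteration of Dyatlov--Jin \cite{Dyatlov_2018} to the higher-dimensional setting, with Definition \ref{def:nonorthogonality} replacing the one-dimensional nonlinearity lemma. I would begin with the $TT^*$ reduction $\|\mathcal{B}_h\|^2 = \|\mathcal{B}_h^*\mathcal{B}_h\|$, so that the question becomes one of bounding the operator on $L^2(\mu_Y)$ with Schwartz kernel
\begin{equation*}
L(y, y') = \int_X \exp\!\Bigl(\tfrac{i}{h}\bigl[\Phi(x, y) - \Phi(x, y')\bigr]\Bigr)\,p(x, y)\,\overline{p(x, y')}\,d\mu_X(x).
\end{equation*}
The phase, to leading order $(y - y') \cdot \partial_y \Phi(x, y^*)$, oscillates in $x$ at rate $|y - y'|\,\|\partial_x\partial_y\Phi\|$, and the hypothesis \eqref{nonorthogonality formula} is precisely the quantitative statement that this oscillation does not degenerate against the fractal $X$ on any dyadic pair of scales from $h$ up to $1$.

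Next, I would reduce to a single-scale gain via self-similarity. Zooming into a product box $I_a \times J_b$ of side $\rho$ centered on $X \times Y$ and changing variables $x = x_a + \rho u$, $y = y_b + \rho v$ produces an operator of the same form with new phase $\tilde\Phi(u,v) = \rho^{-2}\Phi(x_a + \rho u, y_b + \rho v)$, new semiclassical parameter $h' = h/\rho^2$, doubling probability measures with controllable constants, and preserved nonorthogonality constant (the right-hand side of \eqref{nonorthogonality formula} scales exactly as $r_X r_Y$, cancelling the $\rho^{-2}$ in $\tilde\Phi$). The goal is then a single-scale lemma: there exist $\rho \in (0, 1)$ and $\beta < 1$, depending only on $c_N$, the doubling constants, and $\|\Phi\|_{C^3}, \|p\|_{C^1}$, such that $\|\mathcal{B}_h\|_{L^2(\mu_Y) \to L^2(\mu_X)} \leq \beta$ whenever $h \leq \rho^2$. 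Iterating along the dyadic hierarchy then yields $\|\mathcal{B}_h\| \lesssim h^{\varepsilon_0}$ with $\varepsilon_0 = \log\beta^{-1}/\log\rho^{-2} > 0$.

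For the single-scale lemma, I would expand $\|\mathcal{B}_h f\|^2$ against a partition $f = \sum_b \chi_{J_b} f$ of $Y$ at some intermediate scale $\sigma \in (h, \rho)$ and analyze the resulting quadratic form $\sum_{b, b'} \langle \mathcal{B}_h \chi_{J_b} f, \mathcal{B}_h \chi_{J_{b'}} f \rangle$. The off-diagonal terms ($b \neq b'$) are oscillatory integrals whose phase can be related to the quadruple differences in \eqref{nonorthogonality formula} after a further partition of $X$ at scale $h/\sigma$ into balls centered on $X$; the doubling of $\mu_X$ ensures each such ball carries comparable mass, and the nonorthogonality supplies a definite phase variation of size $\gtrsim c_N h$ within each ball, which I would convert into a contractive factor $(1-\eta)$ relative to the trivial bound on $L(y, y')$ and close via Schur's test.

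The principal obstacle is that Definition \ref{def:nonorthogonality} only asserts the \emph{existence} of a nonorthogonal quadruple within each pair of product balls, rather than a pointwise variation bound valid for arbitrary $y, y'$ fixed. Consequently, the cancellation cannot be inserted directly into the $TT^*$ kernel; one must instead reorganize the triple integral over $X \times Y \times Y$ so that the $y, y'$ variables range through pairs realized by the quadruples, using a multiscale pigeonhole powered by doubling. Making this robust in higher dimensions (where $X$ has no linear order to cleanly pair intervals as in $d=1$), and propagating only doubling rather than full Ahlfors--David regularity through the iteration without degrading the constants $\rho, \beta$, is where I expect the bulk of the technical work to lie.
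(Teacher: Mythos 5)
There are two genuine gaps, and together they account for essentially all of the content of the actual proof. The first is your iteration scheme. You reduce to a ``single-scale lemma'' ($\|\mathcal B_h\|\le\beta<1$ once $h\le\rho^2$) and then assert that zooming into product boxes $I_a\times J_b$ of side $\rho$ and rescaling lets the gains multiply across $\sim\log_{\rho^{-1}}(1/h)$ scales. But the operator norm has no such submultiplicativity in the continuous, non-self-similar setting: writing $\mathcal B_h=\sum_{a,b}1_{I_a}\mathcal B_h 1_{J_b}$, the pieces are orthogonal in $x$ (disjoint tiles) but not almost orthogonal in $b$, and summing over the many tiles $J_b$ by Cauchy--Schwarz loses a factor comparable to the number of tiles, which swamps the factor $\beta$ gained inside each box. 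Exact submultiplicativity is special to self-similar arithmetic Cantor sets with the discrete Fourier transform (tensor structure), and the paper explicitly notes it fails here. The replacement is a different bookkeeping altogether: one tracks the tile averages $F_J$ through the identity $F_J=\Expect_{b}e^{i\Psi_b}F_{J_b}$ in the scale-adapted norms $C_\theta(I)$, coupling coarsening in $Y$ with refining in $X$ (the constraint $n+m+1=K$), so the iterated object is a convex combination --- no loss in the number of children --- and Proposition \ref{statistical iteration} supplies the per-step factor $(1-\varepsilon_1)$, which integrates to $h^{\varepsilon_0}$.

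The second gap is the single-scale gain itself, which you honestly flag but do not supply: since \eqref{nonorthogonality formula} only guarantees \emph{one} good quadruple per pair of balls, it cannot be fed pointwise into the $TT^*$ kernel $L(y,y')$, and the ``multiscale pigeonhole powered by doubling'' you invoke is exactly the missing mechanism. The paper resolves this by (i) constructing the perturbed standard discretization (Proposition \ref{discret}) so that every tile contains a point of the fractal quantitatively far from its boundary, (ii) upgrading the existential nonorthogonality to the reverse Cauchy--Schwarz inequality \eqref{RCS}, valid for \emph{all} points of two selected children (via the geometric mean value theorem and the homotopy estimate \eqref{difference of MVTs}), so a single quadruple controls entire tiles, and (iii) a second-moment argument: assuming no gain, Cantelli's inequality forces the quantities $e^{i\Psi_b}F_{J_b}(x_a)$ to be nearly phase-aligned simultaneously at $x_a$ and $x_{a'}$ on an event of probability $\gtrsim\rho$ (bounded below using only doubling), contradicting \eqref{Reverse Cauchy Schwarz}. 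Without step (ii) the single-quadruple information yields no integrated smallness of $L(y,y')$ usable in a Schur test, and without the $C_\theta$ local-constancy structure there is no way to spread the cancellation from the quadruple to the tile; so the proposal, as written, stops short of both essential ingredients rather than completing the theorem by a different route.
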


If one additionally assumes $d = 1$, and that $\mu_X, \mu_Y$ are regular with dimension $\in (0, 1)$, then Theorem \ref{main theorem} was proven by Dyatlov--Jin \cite{Dyatlov_2018}, extending the method of Dolgopyat \cite{Dolgopyat_98} which had already been applied to construct spectral gaps. Using the construction of dyadic cubes in \cite{christ1990b}, it might be possible that Theorem \ref{main theorem} can be generalized to doubling metric spaces. Since there is no immediate application for metric spaces, we have not attempted to write down the more general version.

Following the methods of \cite{Dyatlov_2018}, Theorem \ref{main theorem} implies the following fractal uncertainty principle:

\begin{corollary}\label{FUP classic}
Let $X$ and $Y$ be Ahlfors--David regular sets in $\RR^d$, which are nonorthogonal with respect to the dot product on $\RR^d \times \RR^d$.
Assume that $X$ is $\delta$-regular, $Y$ is $\delta'$-regular, $0 < \delta, \delta' < d$. 
Then there exists $\varepsilon_0 > 0$ such that 
$$\|1_{X_h} \mathscr F_h 1_{Y_h}\|_{L^2 \to L^2} \lesssim h^{\frac{d - \delta - \delta'}{2} + \varepsilon_0}.$$
\end{corollary}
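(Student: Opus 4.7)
The plan is to deduce Corollary~\ref{FUP classic} from Theorem~\ref{main theorem} by a discretization at scale $h$, following the one-dimensional reduction of Dyatlov--Jin~\cite{Dyatlov_2018}.

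First, I would introduce the canonical $\delta$- and $\delta'$-regular Borel probability measures $\mu_X, \mu_Y$ supported on $X, Y$, normalized so that $X, Y$ lie in unit cubes; these are doubling on scales $[h,1]$. Setting $\Phi(x, y) := -x\cdot y$, the Fourier kernel $e^{-ix\cdot y/h}$ equals $e^{i\Phi(x,y)/h}$, and
$$\Phi(x_1, y_1) - \Phi(x_2, y_1) - \Phi(x_1, y_2) + \Phi(x_2, y_2) = -(x_1 - x_2)\cdot (y_1 - y_2),$$
so the hypothesis that $(X, Y)$ is nonorthogonal with respect to the dot product is precisely $\Phi$-nonorthogonality in the sense of Definition~\ref{def:nonorthogonality}. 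Applying Theorem~\ref{main theorem} with symbol $p\equiv 1$ therefore furnishes $\varepsilon_0 > 0$ with $\|\mathcal B_h\|_{L^2(\mu_Y)\to L^2(\mu_X)} \lesssim h^{\varepsilon_0}$.

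Second, I would convert this bound into the Lebesgue-measure statement. Choose maximal $h$-separated sets $\{x_j\}\subset X$ and $\{y_k\}\subset Y$ of cardinalities $\sim h^{-\delta}$ and $\sim h^{-\delta'}$ respectively; the corresponding balls cover $X_h$ and $Y_h$ with bounded overlap and have $\mu$-masses $\sim h^\delta$ and $\sim h^{\delta'}$, versus Lebesgue volume $\sim h^d$. For $u\in L^2$ supported in $Y_h$, let $\tilde u\in L^2(\mu_Y)$ take the value $\dashint_{B(y_k, h)}u$ on $B(y_k, h)\cap Y$, and define $\tilde v\in L^2(\mu_X)$ analogously from $v$. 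The volume counts give
$$\|\tilde u\|_{L^2(\mu_Y)}^2 \sim h^{\delta' - d}\|u\|_{L^2(dx)}^2, \qquad \|\tilde v\|_{L^2(\mu_X)}^2 \sim h^{\delta - d}\|v\|_{L^2(dx)}^2,$$
and since the kernel $e^{-ix\cdot y/h}$ varies by only $O(1)$ across each product ball $B(x_j, h)\times B(y_k, h)\subset I_0\times J_0$, the bilinear pairing $\langle 1_{X_h}\mathscr F_h 1_{Y_h} u, v\rangle_{L^2(dx)}$ equals, up to a bounded multiplicative error, a constant multiple of $h^{3d/2 - \delta - \delta'}\langle \mathcal B_h\tilde u, \tilde v\rangle_{L^2(\mu_X)}$ (the $h^{-d/2}$ from the normalization of $\mathscr F_h$, two factors of $h^d$ from the discretization of $dx\,dy$, and a factor of $h^{-\delta - \delta'}$ from the substitution of $\mu$-masses). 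Collecting these factors with the bound from Theorem~\ref{main theorem} yields exactly the exponent $(d - \delta - \delta')/2 + \varepsilon_0$.

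The main technical obstacle is the discretization step, specifically ensuring that the phase and amplitude errors introduced by replacing $u, v$ with $\tilde u, \tilde v$ do not accumulate over the $\sim h^{-\delta - \delta'}$ interaction pairs. A Schur-test argument exploiting the boundedness of $\Phi$ and its first derivatives on the compact set $I_0\times J_0$, combined with a partition of unity at scale $h$, handles this cleanly; this is the higher-dimensional analogue of the reduction in~\cite{Dyatlov_2018}, whose key steps are dimension-free.
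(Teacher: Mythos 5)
Your high-level plan --- apply Theorem \ref{main theorem} with $\Phi(x,y)=-x\cdot y$ and $p\equiv 1$, then convert between Lebesgue and weighted $L^2$ norms --- is the right idea, but the conversion step has a genuine gap. You apply the main theorem to the fractal measures $\mu_X,\mu_Y$ on $X,Y$ themselves and then try to pass to $1_{X_h}\mathscr F_h 1_{Y_h}$ by replacing $u,v$ with their averages over $h$-balls, on the grounds that the kernel ``varies by only $O(1)$ across each product ball'' so the pairing is preserved ``up to a bounded multiplicative error.'' This is exactly where the argument fails: the phase $x\cdot\xi/h$ varies by $\Theta(1)$, not $o(1)$, over an $h$-ball, so $e^{-ix\cdot\xi/h}$ cannot be pulled out as a constant on each piece, and the discarded part --- the operator $1_{X_h}\mathscr F_h(1-P)1_{Y_h}$, with $P$ the local averaging projection --- has operator norm of the trivial order $h^{(d-\delta-\delta')/2}$ in general (test against $u$ oscillating at frequency $\sim 1/h$, for which $Pu$ loses the oscillation). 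Since the whole content of the corollary is an $h^{\varepsilon_0}$ gain over that trivial bound, an error of that size swallows the gain; the Schur-test remark at the end cannot rescue it, because a Schur test bounds the kernel in absolute value and discards precisely the oscillation that produces the gain.

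The paper's proof avoids this issue by never passing through the fractal measures: it applies Theorem \ref{main theorem} to the renormalized Lebesgue measures $\mu_{X,h}(A)=h^{\delta-d}|X_h\cap A|$ and $\mu_{Y,h}(A)=h^{\delta'-d}|Y_h\cap A|$ on the thickened sets. Lemma \ref{regularity at scale h} shows these are $\delta$- resp.\ $\delta'$-regular (hence doubling) on scales $[2h,1]$, and the companion lemma shows $(X_h,Y_h)$ inherits $\Phi$-nonorthogonality with constant $c_N/4$; since Theorem \ref{main theorem} only requires doubling measures down to scale $\sim h$, it applies, and now $1_{X_h}\mathscr F_h 1_{Y_h}$ is an \emph{exact} constant multiple of $h^{(d-\delta-\delta')/2}\,\mathcal B_h$ viewed as an operator $L^2(\mu_{Y,h})\to L^2(\mu_{X,h})$ --- no approximation at all --- which yields the exponent $\frac{d-\delta-\delta'}{2}+\varepsilon_0$ directly. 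If you insist on keeping the measures $\mu_X,\mu_Y$ on the unthickened sets, the reduction can be repaired, but only via a genuine lemma in the spirit of Dyatlov--Zahl's equivalence of the two FUP formulations: for instance, write $1_{Y_h}u\,dx$ as a superposition over translations $|w|\leq 2h$ of measures absolutely continuous with respect to $\mu_Y$, observe that the resulting extra factors $e^{-iw\cdot\xi/h}$ are unimodular in $\xi$, and conclude with Minkowski's integral inequality --- not by the rough-constancy argument you give.
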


\subsubsection{Lower bounds on the uncertainty exponent}
If we let
\begin{equation}\label{L}
L := \frac{10^{14} d^3}{c_N^3} \max(1, \|\partial^2_{xy} \Phi\|_{C^1}^3)
\end{equation}
then we can take in Theorem \ref{main theorem}
\begin{equation}\label{epsilon0}
\frac{1}{\varepsilon_0} \leq 6 \cdot 10^9 c_N^{-2} d^2(C_D(X) C_D(Y))^{4\lceil \log_2(20L^{5/3}) \rceil} L^{2/3} \log L.
\end{equation}
In the model case that $X = Y$ is regular, $d = 1$, and $\Phi(x, y) = -xy$, we can always take $c_N = C_R^{-\frac{4}{\delta}}$ and $C_D = 2^\delta C_R^2$, which gives a subexponential bound of the form $1/\varepsilon \lesssim e^{C(\delta) \log_2 C_R}$. This is because of the rather poor dependence of $\varepsilon_0$ on the doubling constant; if one modified our proof to use the Ahlfors--David regularity directly, they would obtain a bound of the form $1/\varepsilon \lesssim C_R^{O(1 + 1/\delta)}$, which is comparable with the bound
$1/\varepsilon_0 \lesssim C_R^{\frac{160}{\delta(1 - \delta)}}$ of \cite{Dyatlov_2018}.

In any case, it does not seem that one can use Dolgopyat's method to obtain sharp fractal uncertainty principles, which therefore remains an interesting and challenging open problem. To drive this point home, we recall that in the case $d = 1$, $\delta = 1/2$, an unpublished manuscript of Murphy claims $1/\varepsilon_0 \lesssim \log C_R \log \log C_R$ \cite[\S1]{Cladek_2020}.

\subsubsection{Applications to spectral gaps}
Suppose $M=\Gamma\backslash \mathbf{H}^{d+1}$ is a (noncompact) convex cocompact hyperbolic manifold and $\Lambda(\Gamma)$ is the limit set (see \S{\ref{s:covcocom}} for the definition). The Patterson--Sullivan measure $\mu$ on $\Lambda(\Gamma)$ is Ahlfors--David regular of dimension $\delta_\Gamma\in [0,d)$ \cite[Theorem 7]{Sullivan1979}.
Under the condition that $\Gamma$ is Zariski dense\footnote{We note carefully that all varieties in this paper are considered to be over $\RR$, even when they have a natural structure as a $\CC$-variety!} in the algebraic group $SO(d + 1, 1)_0$, $(\Lambda(\Gamma),\mu)$ satisfies the nonorthogonality condition \eqref{nonorthogonality formula} for very general $\Phi(x,y)$ (see Corollary \ref{c:nonortho_hyper}). So we have the fractal uncertainty principle for $\Lambda(\Gamma)$ with very general phase functions.

Dyatlov--Zahl \cite{Dyatlov_2016} showed that fractal uncertainty principles can be used to prove essential spectral gaps. Let $\Delta$ be the Laplace--Beltrami operator on $M$, then the resolvent
$$R(\lambda):= \left(-\Delta-\frac{d^2}{4}-\lambda^2\right)^{-1}:L^2_{\rm comp}(M)\to H^2_{\rm loc}(M)$$
is well-defined for $\Im(\lambda)\gg 1$ and has a meromorphic continuation to $\lambda\in\CC$; see \cite{Mazzeo87,guillarmou2005meromorphic} for (even) asymptotically hyperbolic manifolds and \cite{guillope1995polynomial} for manifolds with constant negative curvature near infinity. Vasy \cite{vasy2013a,vasy2013b} had a new construction of the meromorphic continuation, which is the one used in \cite{Dyatlov_2016}.

The standard Patterson--Sullivan gap \cite{patterson1976limit,Sullivan1979} says
\begin{align}
    R(\lambda) \text{ has only finitely many poles in }\left\{\Im(\lambda)\geq -\max\left(0,\frac{d}{2}-\delta_\Gamma\right)\right\}.
\end{align}
Moreover, there is no pole in $\{\Im(\lambda) > \delta_\Gamma-d/2\}$ and there are conditions on $\delta_\Gamma$ such that $\lambda=i(\delta_\Gamma-d/2)$ is the first pole (see \cite{Sullivan1979,patterson1988lattice}). Using methods of \cite{Dyatlov_2016}, we can improve the essential spectral gap when $\delta_\Gamma\leq d/2$.

\begin{theorem}\label{scattering theory}
Let $M$ be a noncompact convex cocompact hyperbolic $d+1$-fold such that $\Gamma = \pi_1(M)$ is Zariski dense in $SO(d + 1, 1)_0$.
Let $\delta_\Gamma \in (0, d)$ be the Hausdorff dimension of the limit set $\Lambda(\Gamma)$.
Then there exists $\varepsilon_0 > 0$ such that for any $\varepsilon > 0$, $R(\lambda)$ has only finitely many poles $\lambda$ with $\Im \lambda > \delta_\Gamma - \frac{d}{2} - \varepsilon_0 + \varepsilon$. Moreover, for any $\chi\in C_0^\infty(M)$, there exists $C_0=C_0(\varepsilon)>0$ and $C=C(\varepsilon,\chi)>0$ such that
\begin{align}\label{resolvent estimate}
    \|\chi R(\lambda)\chi\|_{L^2\to L^2}\leq C|\lambda|^{-1-2\min(0,\Im\lambda)+\varepsilon},\quad |\lambda|>C_0,\quad \Im \lambda\in\left[\delta_\Gamma - \frac{d}{2} - \varepsilon_0 + \varepsilon,1\right].
\end{align}
\end{theorem}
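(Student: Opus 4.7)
The plan is to realize Theorem \ref{scattering theory} as an application of the Dyatlov--Zahl reduction from fractal uncertainty principles to essential spectral gaps on convex cocompact hyperbolic manifolds \cite{Dyatlov_2016}, with the requisite FUP supplied by Theorem \ref{main theorem} applied to the Patterson--Sullivan measure on $\Lambda(\Gamma)$.

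First, I would record that by Sullivan's theorem the Patterson--Sullivan measure $\mu$ is $\delta_\Gamma$-regular, hence doubling on scales $[h, 1]$ with a constant depending only on $\Gamma$; this verifies the first hypothesis of Theorem \ref{main theorem}. Second, I would work in a local stereographic chart on the boundary sphere $\Sph^d = \partial \Hyp^{d+1}$, in which the microlocal analysis of the resolvent produces a phase $\Phi$ built from the Busemann function. Corollary \ref{c:nonortho_hyper}---which is the geometric heart of the argument and invokes Zariski density of $\Gamma$ in $SO(d+1,1)_0$ crucially---says that $(\Lambda(\Gamma), \Lambda(\Gamma))$ is $\Phi$-nonorthogonal from scales $h$ to $1$ with some $c_N > 0$ depending only on $\Gamma$. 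Theorem \ref{main theorem} then yields a bound $\|\mathcal B_h\|_{L^2(\mu) \to L^2(\mu)} \lesssim h^{\varepsilon_0}$ for some $\varepsilon_0 > 0$, which after a standard cover-by-boxes and Plancherel argument (as in Corollary \ref{FUP classic}) produces
\[
\|1_{X_h}\mathscr F_h 1_{Y_h}\|_{L^2 \to L^2} \lesssim h^{(d - 2\delta_\Gamma)/2 + \varepsilon_0}
\]
with $X = Y$ a small neighborhood of $\Lambda(\Gamma)$ in such a chart.

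Third, I would feed this FUP into the Dyatlov--Zahl framework. Using Vasy's construction \cite{vasy2013a,vasy2013b} of the meromorphically continued resolvent and the hyperbolic propagation estimates of \cite{Dyatlov_2016}, the norm $\|\chi R(\lambda) \chi\|$ for $|\lambda|$ large and $\Im \lambda$ in the claimed strip is dominated by the norm of a semiclassical operator morally of the same form as $\mathcal B_h$, with $h \sim |\lambda|^{-1}$; inserting the FUP gain $\varepsilon_0$ converts the trivial Patterson--Sullivan gap into the improved gap $\delta_\Gamma - d/2 - \varepsilon_0 + \varepsilon$ and produces \eqref{resolvent estimate}. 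Finiteness of poles in the strip is then immediate from the resolvent bound for $|\lambda| > C_0$ together with the meromorphy provided by Vasy's Fredholm theory.

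The main obstacle is the verification of the quantitative nonorthogonality statement underlying Corollary \ref{c:nonortho_hyper}. While Zariski density easily rules out $\Lambda(\Gamma)$ lying in any proper real algebraic subvariety of $\Sph^d$, upgrading this qualitative statement to the multi-scale condition \eqref{nonorthogonality formula}---uniform over all balls of all admissible radii centered on $\Lambda(\Gamma)$---requires a compactness/rescaling argument that exploits both the self-similarity of the Patterson--Sullivan measure under the $\Gamma$-action and its Ahlfors--David regularity, in order to prevent the mixed second differences of $\Phi$ from degenerating on any such pair of balls.
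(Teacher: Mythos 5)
Your proposal follows essentially the same route as the paper: doubling/regularity of the Patterson--Sullivan measure, $\Phi$-nonorthogonality of $(\Lambda(\Gamma),\Lambda(\Gamma))$ from Zariski density (Corollary \ref{c:nonortho_hyper}, which the paper obtains directly from the Sarkar--Winter non-concentration property \cite[Proposition 6.6]{sarkar2021exponential} rather than a new compactness/rescaling argument), then Theorem \ref{main theorem} applied to the kernel $|x-y|^{2i/h}$ together with Lemma \ref{regularity at scale h}, the Bourgain--Dyatlov covering step to pass from $h$- to $h^\rho$-neighborhoods, and finally \cite[Theorem 3]{Dyatlov_2016} for the gap and the bound \eqref{resolvent estimate}. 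The one imprecision is that the fractal uncertainty principle fed into the Dyatlov--Zahl machinery must hold for this nonlinear phase (not literally for $\mathscr F_h$ as you wrote), but that is exactly what Theorem \ref{main theorem} supplies, so the argument is sound.
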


In \cite[Theorem 2]{Dyatlov_2018}, Dyatlov--Jin showed Theorem \ref{scattering theory} with $d=1$ by proving Theorem \ref{main theorem} for $d=1$ and $X$ and $Y$ $\delta$-regular and applying \cite[Theorem 3]{Dyatlov_2016}; our result is the natural higher-dimensional generalization of this theorem.

The spectral gap in Theorem \ref{scattering theory} was first proved by Naud \cite{Naud2005} in dimension $2$ and generalized by Stoyanov \cite{stoyanov2011spectra} to higher dimensions. The size of their gap is implicit but our method gives an explicit constant $\varepsilon_0$ as in \eqref{epsilon0} depending on the fractal dimension $\delta_\Gamma$, the regularity constant and the nonorthogonality constant of the limit set $\Lambda(\Gamma)$. We give a method for computing nonorthogonality constants from the generators of a classical Schottky group $\Gamma\subset SL(2,\CC)$ in \S\ref{how to compute}.

Another advantage of the method of \cite{Dyatlov_2016} is that we also get the resolvent estimate (\ref{resolvent estimate}), which is hard to obtain using transfer operator techniques and in partiular is not included in \cite{Naud2005, stoyanov2011spectra}.

\begin{corollary}\label{Selberg zeta function}
Let $M$ be convex cocompact with $\Gamma$ Zariski dense.
Let $\zeta_M$ be the \dfn{Selberg zeta function}
\begin{align*}
    \zeta_M(s)=\prod\limits_{l\in\mathcal{L}_M}\prod\limits_{k=0}^\infty (1-e^{-(s+k)l}), \quad s=\frac{d}{2}-i\lambda
\end{align*}
where $\mathcal{L}_M$ consists of the lengths of all primitive closed geodesics on $M$ (with multiplicity). Then $\zeta_M(s)$ has only finitely many singularities (i.e. zeroes or poles) in the half plane $\{\Re s>\delta_\Gamma-\epsilon_0+\epsilon\}$ for any $\epsilon>0$.
\end{corollary}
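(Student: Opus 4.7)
The plan is to deduce Corollary \ref{Selberg zeta function} from Theorem \ref{scattering theory} through the standard divisor formula relating the Selberg zeta function of a convex cocompact hyperbolic manifold to the resonances of the Laplacian. The first step is to recall (from Patterson--Perry for the surface case, and Bunke--Olbrich or Guillop\'e--Zworski--Perry in higher dimensions) that the divisor of $\zeta_M(s)$ splits as
\begin{equation*}
\mathrm{div}(\zeta_M) = \sum_{\lambda_j \in \mathrm{Res}(R)} m_R(\lambda_j)\,\delta_{s = d/2 - i\lambda_j} + \mathrm{div}_{\mathrm{top}},
\end{equation*}
where $m_R(\lambda_j)$ is the multiplicity of the resolvent pole and $\mathrm{div}_{\mathrm{top}}$ is a ``topological'' divisor supported on a discrete set of the form $\{-k : k \in \NN_{\geq 0}\}$ (with multiplicities controlled by the Euler characteristic in even dimensions, and similar curvature invariants in odd dimensions).

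Next I would translate the half-plane $\{\Re s > \delta_\Gamma - \varepsilon_0 + \varepsilon\}$ into the resonance parameter via $s = d/2 - i\lambda$, which gives $\Re s = d/2 + \Im \lambda$. The half-plane thus corresponds exactly to the strip $\{\Im \lambda > \delta_\Gamma - d/2 - \varepsilon_0 + \varepsilon\}$ appearing in Theorem \ref{scattering theory}. Applying that theorem immediately yields finiteness of the resonance part of the divisor in this region. For the topological part, since $\delta_\Gamma > 0$ and we may harmlessly assume $\varepsilon_0 < \delta_\Gamma$, the region $\{\Re s > \delta_\Gamma - \varepsilon_0 + \varepsilon\}$ contains only finitely many non-positive integers; hence $\mathrm{div}_{\mathrm{top}}$ contributes only finitely many singularities there. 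Summing the two finite contributions gives the corollary.

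The main obstacle is purely bibliographic: one must invoke the correct form of the divisor formula in arbitrary dimension with the correct identification of resonance multiplicities $m_R(\lambda)$ with the vanishing orders of $\zeta_M$ at $s = d/2 - i\lambda$. For even $d+1$ this is done by Patterson--Perry via a regularized product formula built from the Selberg trace formula, while for odd $d+1$ one uses Bunke--Olbrich's meromorphic extension. Once the divisor formula is accepted as a black box from the literature, the remainder of the argument is the two-line translation sketched above; no new analytic input beyond Theorem \ref{scattering theory} is required.
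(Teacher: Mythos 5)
Your proposal is correct and follows exactly the paper's route: the paper proves this corollary by citing Theorem \ref{scattering theory} together with the divisor formulas of Patterson--Perry \cite{patterson2001divisor} and Bunke--Olbrich \cite{bunke1999group}, which is precisely the black-box reduction you describe. The translation $\Re s = d/2 + \Im\lambda$ and the handling of the topological part of the divisor are both as in the standard argument the paper invokes.
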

\begin{proof}
This follows from Theorem \ref{scattering theory} and \cite{bunke1999group, patterson2001divisor}.
\end{proof}



The spectral gap is closely related to asymptotics of closed geodesics and exponential decay of correlations, which are important and well-studied questions in dynamical systems. We list a few references.
\begin{itemize}
    \item Chernov \cite{Chernov98} gave the first dynamical proof showing sub-exponential decay of correlations for $3$-dimensional contact Anosov flows.
    The groundbreaking work of Dolgopyat \cite{Dolgopyat_98} showed exponential decay of correlations for transitive Anosov flows with jointly nonintegrable $C^1$ stable/unstable foliations.
    \item Naud \cite{Naud2005} applied Dolgopyat's method to prove Theorem \ref{scattering theory} in dimension $2$.
    \item Stoyanov \cite{stoyanov2008spectra,stoyanov2011spectra} showed exponential mixing for a general class of Axiom A flows satisfying his local non-integrability condition.
    \item Sarkar--Winter \cite{sarkar2021exponential} used Dolgopyat's method to prove exponential mixing of the frame flow for convex cocompact hyperbolic manifolds. Chow--Sarkar \cite{chow2022exponential} extended it to locally symmetric spaces. 
\end{itemize}
All the above works require certain \dfn{nonintegrability conditions} which should be thought as the analogue of our nonorthogonality condition \eqref{nonorthogonality formula}.

We would like to mention some other related works on the spectral gap for convex cocompact hyperbolic manifolds.
\begin{itemize}
    \item Dyatlov--Zahl \cite{Dyatlov_2016}, Dyatlov--Jin \cite{Dyatlov_2018} and Bourgain--Dyatlov \cite{Bourgain_2018} proved the fractal uncertainty principle for $d=1$ and hence gave explicit essential spectral gaps.
    \item Bourgain--Dyatlov \cite{Bourgain_2017} used Fourier decay of the Patterson--Sullivan measure to get a spectral gap that only depends on $\delta_\Gamma$ when $d=1,\delta_\Gamma\leq 1/2$. This is generalized to Kleinian Schottky groups when $d=2$ by Li--Naud--Pan \cite{li2021kleinian} but in this case the spectral gap will depend on $\delta_\Gamma$ and another quantity related to our non-orthogonality constant $c_N$ (see \cite[Lemma 4.4]{li2021kleinian}). See also recent work of Khalil \cite{khalil2023exponential} for a method using additive combinatorics.
    \item Oh--Winter \cite{oh2016uniform} showed a uniform spectral gap for a large family of congruence arithmetic surfaces, which was then generalized to arbitrary dimensions by Sarkar \cite{Sarkar22}.
\end{itemize}

\subsection{Idea of the proof}
\subsubsection{Model problem: Arithmetic Cantor sets}\label{model problem}
We first describe the problem in the model case that $X, Y$ are arithmetic Cantor sets. Let $M \geq 3$ be an integer and $A, B \subseteq \{0,1, \dots, M-1\}^d$ be sets with
$\delta_A := \frac{\log|A|}{\log(M)}, \delta_B := \frac{\log|B|}{\log(M)} \le \frac{d}{2}$. We let $N := M^k$ and define the \dfn{arithmetic Cantor sets}
$$C_{k, A} := \{a_0 + a_1M + \cdots + a_{k-1} M^{k-1}: a_i \in A\}$$
$$C_{k, B} := \{b_0 + b_1M + \cdots + b_{k-1} M^{k-1}: b_i \in B\}.$$
We introduce the \dfn{discrete Fourier transform}
$$\mathcal F_N f(j) := N^{-d/2} \sum_{\ell \in \{0,1, \dots, N-1\}^d} \exp\left(2\pi ij \cdot \frac{\ell}{N}\right) f(\ell).$$
The fractal uncertainty principle states that there exists some $\varepsilon_0 > 0$ such that
\begin{equation}\label{DiscreteFUP}
\|1_{C_{k, A}} \mathcal{F}_N 1_{C_{k, B}}\|_{\ell^2 \to \ell^2} \lesssim N^{-\beta - \varepsilon_0}
\end{equation}
where $\beta := \frac{d - \delta_A - \delta_B}{2}$ \cite[\S3]{Dyatlov_2017}. Analyzing the Hilbert--Schmidt norm, we have
\begin{equation}\label{HSNorm}
\|1_{C_{k, A}} \mathcal{F}_N 1_{C_{k, B}}\|_{\ell^2 \to \ell^2} \le \|1_{C_{k, A}} \mathcal{F}_N 1_{C_{k, B}}\|_{HS} = \sqrt{\frac{|A|^k |B|^k}{N^d}} = N^{-\beta}.
\end{equation}
Thus, our goal is to obtain additional gain beyond $\beta$. To prove this, one can show as in \cite[Lemma 6.4]{Dyatlov_2019} that if we let
$$r_k := \|1_{C_{k, A}} \mathcal{F}_N 1_{C_{k, B}}\|_{\ell^2 \to \ell^2},$$
then $r_{k_1 + k_2} \le r_{k_1}r_{k_2}$. This can be used to show that if we can get any gain at all at some scale $k$, then we get a gain on all further levels, so we suppose for the sake of contradiction that we cannot obtain any gain at any scale, or that {the inequality present in (\ref{HSNorm}) is an equality. Then 
since the Hilbert--Schmidt norm measures the square root of the sum of squares of the singular values and the operator norm measures the largest singular value, it follows that the operator $N^{d/2} 1_{C_{k, A}} \mathcal{F}_N 1_{C_{k, B}}$ must be rank one. A simple computation then shows that the operator $N^{d/2} 1_{C_{k, A}} \mathcal{F}_N 1_{C_{k, B}}$ is the matrix $(\exp(2\pi i j\cdot \ell/N))_{j \in C_{k, A}, \ell \in C_{k, B}}$ (and is zero in the unspecified entries). Computing the determinant of $2 \times 2$ minors, we see that
\[ \left|\det\begin{pmatrix} \exp(2\pi i j \cdot \ell/N) & \exp(2\pi i j' \cdot \ell/N) \\ \exp(2\pi i j \cdot \ell'/N) & \exp(2\pi i j' \cdot \ell'/N) \end{pmatrix}\right| = \left|\exp\left(2\pi i \frac{\langle j - j', \ell - \ell' \rangle}{N}\right) - 1\right| = 0\]
for all $j, j' \in C_{k, A}$ and $\ell, \ell' \in C_{k, B}$. Thus, (\ref{DiscreteFUP}) holds as long as a \emph{nonorthogonality} condition
$$\langle j - j', \ell - \ell' \rangle \neq 0$$
holds for some choice of $j, j' \in A, \ell, \ell' \in B$. 
If non-orthogonality is violated at all scales, then \eqref{DiscreteFUP} cannot hold, see Example \ref{nonorthogonality is necessary}.

\subsubsection{Nonorthogonality and Dolgopyat's method}
Our proof and the proof of \cite{Dyatlov_2018} lies in the continuous setting where the fractal is not necessarily self-similar. Thus, we must construct a tree of tiles that discretizes the doubling measure $\mu$, and which is regular enough so that each tile has two children which are spaced far apart away. While very nice submultiplicativity does not hold as it does in the discrete case, we can still, via an induction on scales argument, propagate gain on one scale to gain on all scales. The key tool allowing us to obtain gain on all scales is nonorthogonality, which we formulated in (\ref{nonorthogonality formula}); it asserts that we can find many points in the intersections of the vertical and horizontal laminations where the phase is ``oscillating faster than the function $\mathcal B_h$ is being tested against" at every scale, and so we must obtain a gain at every scale. This technique, called \dfn{Dolgopyat's method}, has been used to obtain fractal uncertainty principles, spectral gaps, or exponential mixing in previous works, including \cite{Dolgopyat_98, Naud2005, stoyanov2008spectra, stoyanov2011spectra, Dyatlov_2018, Tsujii2023}.

The improvement on each child is measured in the spaces $C_\theta(I)$ that were introduced in \cite[Lemma 5.4]{Naud2005}. Informally speaking, localizations of $\mathcal{B}_h$ to a tile $I$ have roughly constant oscillation when normalized by $\theta \diam(I)$ for some appropriate choice of $\theta$ \cite[\S2.2]{Dyatlov_2018}. The $C_\theta(I)$ norms are meant to capture this fact and to measure cancellation on scale $I$, similar to how algebraic manipulations on $M^k$-dimensional vectors can be used to measure cancellation in the arithmetic Cantor case.

\subsubsection{Improvements over Dyatlov--Jin}
The method of Dyatlov--Jin \cite{Dyatlov_2018} does not immediately generalize to $d \geq 2$, for two reasons.
First, in order to ensure that each interval has at least two children that are sufficiently far apart, Dyatlov--Jin allow intervals of varying length to appear in the tree by merging together consecutive intervals that intersect the fractal. However, in higher dimensions this leads to long, narrow, winding tiles appearing in the tree; these do not satisfy suitable doubling estimates, as exemplified by the following example.

\begin{example}
Let $X$ be a Sierpi\'nski carpet, and consider the merged discretization for $X$ (see \S\ref{discretization} or \cite[\S2.1]{Dyatlov_2018}).
Since $X$ is path-connected, every scale consists of a single tile, the only child of the single tile at the previous scale! It is impossible to prove that every tile has two children which enjoy phase cancellation.

However, our method must be able to handle the Sierpi\'nski carpet, since it meets the hypotheses of Corollary \ref{FUP classic} if it is embedded in $\RR^4$. Indeed, $2\delta_X \approx 3.8 < 4$. Moreover, $X$ is nonorthogonal to itself at one scale (see the figure), so it is at every scale by self-similarity.

\begin{figure}[h]

\centering
\includegraphics[width=0.5\textwidth]{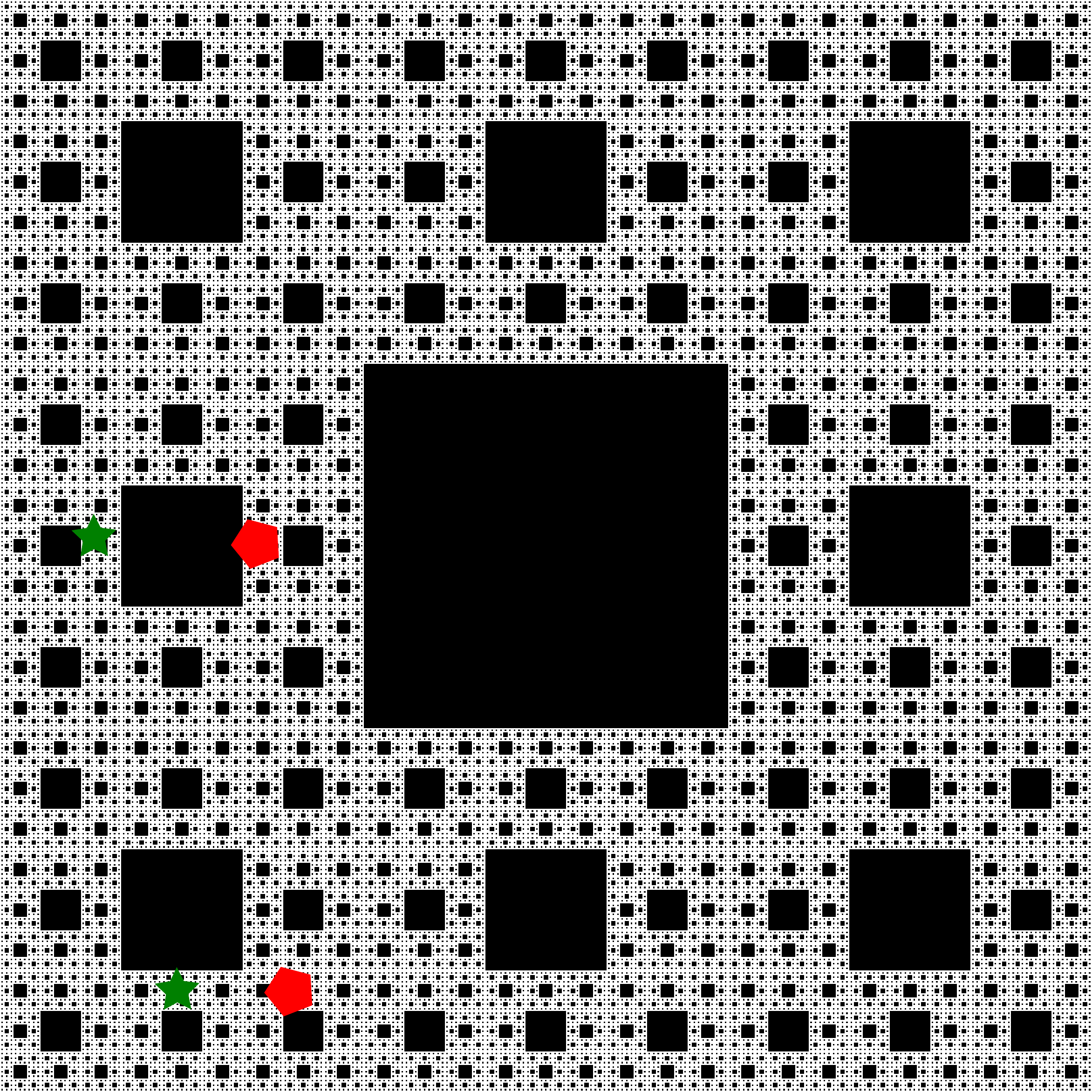}
\caption{Nonorthogonality of the Sierpi\'nski carpet $X$ (the white region) to itself at scale $\frac{1}{3}$ (where $\diam X = \sqrt 2$). Given any two green points $x_1, y_1 \in X$, we can find two red points $x_2, y_2 \in X$ such that $|x_1 - x_2|$ and $|y_1 - y_2|$ are both $\approx 0.15$, and $|\sin \angle(x_2 - x_1, y_2 - y_1)| \ll 1$, so $(X, X)$ is nonorthogonal with constant $(3 \cdot 0.14)^2 \approx 0.42$. Adapted from \cite{Rossel08}.}
\end{figure}
\end{example}

Secondly, as remarked above, one cannot obtain cancellation for arbitrary children $I_1, I_2$, but only those which are ``not orthogonal to each other". Otherwise, even if we construct $I_1, I_2$ to be the appropriate distance each other to impose cancellation, it will not follow that the phases actually cancel each other.

\begin{example}\label{nonorthogonality is necessary}
Let $X := [-5, 5] \times \{0\}$ and $Y := \{0\} \times [-5, 5]$. The Gaussian
$$f(x, y) := e^{-\frac{x^2}{2} - \frac{y^2}{2h^2}}$$
is localized to $X_{5h}$ and its Fourier transform is localized to $Y_{5h}$. So the fractal uncertainty principle is simply false for $(X, Y)$, even though $\delta_X + \delta_Y = 2 \leq 2$, and we must use the nonorthogonality hypothesis somehow. One can also see if $X'\subset X$ and $Y'\subset Y$ are fractals, then fractal uncertainty principle does not hold for $(X',Y')$.
\end{example}

To overcome these difficulties, we improve on Dyatlov--Jin as follows:
\begin{enumerate}
\item We carefully construct the tree, so that tiles in the tree are very close to cubes, and therefore satisfy good doubling estimates, but also so that each tile contains two children a suitable distance from each other. 
\item We prove that if $X, Y$ are nonorthogonal, then tangent vectors to $X, Y$ satisfy a \dfn{reverse Cauchy--Schwarz inequality} which ensures that the phases cannot decouple.
\end{enumerate}
These goals are accomplished by Proposition \ref{discret}, which asserts that we can construct the so-called \dfn{perturbed standard discretization} of $\mu$, and 
Proposition \ref{random discretization}, which asserts that many quadruples of tiles in the perturbed standard discretization satisfy the desired spacing and reverse Cauchy--Schwarz inequality.

We found it convenient to use the language of probability theory to state Proposition \ref{random discretization}, as we then could interpret the various quantities appearing in the induction on scale (Proposition \ref{statistical iteration}) as expected values or variances of certain averages of $\mathcal B_h f$ taken over random tiles. The necessary estimates needed to obtain a contradiction then follow from the \dfn{second moment method} -- namely, the observation that, if Proposition \ref{statistical iteration} is false, then the variance of such random variables is impossibly small given the large size of their tails. A similar approach was taken by \cite{Dyatlov_2018}, which used the strict convexity of balls in Hilbert spaces \cite[Lemma 2.7]{Dyatlov_2018} to accomplish the same goals.

\subsection{Outline of the paper}
In \S\ref{preliminaries} we recall some preliminaries.

In \S\ref{discretization} we construct our discretization and show that it has good statistical properties, as made precise by Proposition \ref{random discretization}.

In \S\ref{induction on scale} we carry out our inductive argument. The main proposition is the iterative step, Proposition \ref{statistical iteration}; we then use this to prove Theorem \ref{main theorem}.

We then turn to the applications in \S\ref{applications} where we reduce Corollary \ref{FUP classic} and Theorem \ref{scattering theory} to Theorem \ref{main theorem} by standard techniques.

\subsection{Acknowledgments}
The authors would like to thank Semyon Dyatlov for suggesting this problem and for helpful comments on earlier drafts.
We would also like to thank Frédéric Naud for suggesting the references \cite{stoyanov2008spectra, stoyanov2011spectra, sarkar2021exponential}, Pratyush Sarkar for suggesting the references \cite{Sarkar22, Mazzeo87, guillope1995polynomial}, Terence Tao for helpful discussions and for suggesting the reference \cite{christ1990b}, Qiuyu Ren for proposing the method we use in \S\ref{how to compute}, and Long Jin and Ruixiang Zhang for helpful discussions.

AB was supported by the National Science Foundation's Graduate Research Fellowship Program under Grant No. DGE-2040433, JL was supported by the NSF's GRFP under Grant No. DGE-2034835, and ZT was partially supported by the NSF grant DMS-1952939 and Simons Targeted Grant Award No. 896630.


\section{Preliminaries}\label{preliminaries}
\subsection{Probability theory}
We shall have probability spaces $A, B$, and will denote by $a, a', a''$ and $b, b', b''$ outcomes in those spaces (or equivalently random variables with values in $A, B$). The expected value of a random variable $X$ is denoted $\Expect X$, while $\Expect(X|E)$ refers to the conditional expectation of $X$ assuming an event $E$. The probability of the event $E$ is denoted $\Pr(E)$, and the variance of a random variable is 
$$\Var X := \Expect(X^2) - (\Expect X)^2.$$
If $X, Y$ are i.i.d., then 
$$\Expect |X - Y|^2 = \Expect |X|^2 + \Expect |Y|^2 - 2\Expect(XY) = 2(\Expect |X|^2 - (\Expect X)^2)$$
and so 
\begin{equation}\label{variance in a product space}
\Expect |X - Y|^2 = 2 \Var X.
\end{equation}
We also record \dfn{Cantelli's inequality}, valid for any constant $\lambda > 0$ \cite[Theorem 1]{Lugosi09}:
\begin{equation}\label{Cantelli}
\Pr(X \geq \Expect X + \lambda) \leq \frac{\Var X}{\lambda^2 + \Var X}.
\end{equation}

\subsection{A geometric mean value theorem}
We shall need an analogue of the mean value theorem for phase functions \cite[Lemma 2.5]{Dyatlov_2018}. To formulate it, we shall recall some differential geometry.

If $R$ is a nondegenerate rectangle in $\RR^d_x \times \RR^d_y$, and $v, w$ are unit tangent to the edges of $R$, then we write $\gamma_R := v \otimes w$ for the \dfn{unit bitangent} to $R$\footnote{Strictly speaking, the unit bitangent should be defined using the exterior algebra, but since $R$ is assumed nondegenerate this adds more complication for no gain.} and $\dif A_R$ for the area element on $R$.
We will consider the case that $v \in \RR^d_x$ and $w \in \RR^d_y$.
In that case, $\gamma_R$ and the off-diagonal Hessian $\partial^2_{xy} \Phi$ both lie in $\RR^d_x \otimes \RR^d_y$, so we can consider their contraction
$$\langle \partial^2_{xy} \Phi, \gamma_R\rangle = \partial_v \partial_w \Phi.$$

\begin{lemma}
Let $\Phi \in C^2(\RR^d \times \RR^d)$. Let $x_0, x_1, y_0, y_1 \in \RR^d$, and let $R$ be the rectangle with vertices $(x_i, y_j)$, $i,j \in \{0, 1\}$. Then 
\begin{equation}\label{geometric MVT}
\int_R \langle \partial^2_{xy} \Phi, \gamma_R\rangle \dif A_R = \Phi(x_0, y_0) - \Phi(x_0, y_1) - \Phi(x_1, y_0) + \Phi(x_1, y_1).
\end{equation}
\end{lemma}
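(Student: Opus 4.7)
The plan is a direct parameterization: write $u := x_1 - x_0$ and $v := y_1 - y_0$, then parameterize $R$ by $(s,t) \in [0,1]^2$ via $\Psi(s,t) := (x_0 + su,\, y_0 + tv)$. Under this parameterization the unit bitangent is $\gamma_R = \frac{u}{|u|} \otimes \frac{v}{|v|}$ and the area element pulls back to $\dif A_R = |u|\,|v|\,\dif s\,\dif t$, so the two factors of $|u|$, $|v|$ cancel and the integrand becomes
\[
\langle \partial^2_{xy}\Phi(\Psi(s,t)),\, u \otimes v\rangle = u_i v_j\, \partial_{x_i}\partial_{y_j}\Phi(\Psi(s,t)).
\]

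Next I would recognize the right-hand side as the mixed partial $\partial_s \partial_t g$ of the pulled-back function $g(s,t) := \Phi(\Psi(s,t)) = \Phi(x_0 + su, y_0 + tv)$. Indeed, the chain rule gives $\partial_s g = u \cdot \partial_x \Phi \circ \Psi$ and then $\partial_t \partial_s g = u_i v_j\, \partial_{x_i}\partial_{y_j}\Phi \circ \Psi$, with the order of $\partial_s$ and $\partial_t$ irrelevant by the $C^2$ hypothesis on $\Phi$.

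Finally, apply the fundamental theorem of calculus twice:
\[
\int_0^1 \int_0^1 \partial_s \partial_t g(s,t)\,\dif s\,\dif t = g(1,1) - g(1,0) - g(0,1) + g(0,0),
\]
which is exactly $\Phi(x_1,y_1) - \Phi(x_1,y_0) - \Phi(x_0,y_1) + \Phi(x_0,y_0)$, matching the claimed identity.

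There is no serious obstacle here; the only thing to be careful about is bookkeeping for the degenerate cases $u = 0$ or $v = 0$ (in which case both sides are zero, since the hypothesis that $R$ is nondegenerate can be relaxed by continuity, or one can simply note that the statement is vacuous because $R$ has no interior). The content of the lemma is really just a coordinate-invariant reformulation of the two-variable fundamental theorem of calculus, packaged so that later applications can integrate $\langle \partial^2_{xy}\Phi, \gamma_R\rangle$ geometrically against the nonorthogonality hypothesis (\ref{nonorthogonality formula}).
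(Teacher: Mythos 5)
Your proof is correct and is essentially the same as the paper's: both pull the integral back to a coordinate square, identify the integrand with the mixed partial of $\Phi$ along the parameterization, and apply the fundamental theorem of calculus twice. The only cosmetic difference is that the paper first reduces to the case $x_0 = y_0 = 0$ with $x_1, y_1$ on the first coordinate axis via an isometry, whereas you parameterize the rectangle directly, which saves that reduction step.
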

\begin{proof}
Both sides of (\ref{geometric MVT}) are preserved by orientation-preserving isometries which preserve the product structure on $\RR^d \times \RR^d$. 
In particular, we may take $x_0, y_0 = 0$, $x_1 = (\xi^*, 0, \dots, 0)$, and $y_1 = (\eta^*, 0, \dots, 0)$ for some $\xi^*, \eta^* \in \RR$.
We then set
$$\varphi(\xi, \eta) := \Phi((\xi, 0, \dots, 0), (\eta, 0, \dots, 0)).$$
Then by Fubini's theorem,
\begin{align*}
    \int_R \langle \partial^2_{xy} \Phi, \gamma_R\rangle \dif A_R
    &= \int_0^{\xi^*} \int_0^{\eta^*} \partial_\xi \partial_\eta \varphi(\xi, \eta) \dif \eta \dif \xi \\
    &= \int_0^{\xi^*} \partial_\xi \varphi(\xi, \eta^*) - \partial_\xi \varphi(\xi, 0) \dif \xi \\
    &= \varphi(\xi^*, \eta^*) - \varphi(\xi^*, 0) - (\varphi(0, \eta^*) - \varphi(0, 0)) \\
    &= \Phi(x_0, y_0) - \Phi(x_0, y_1) - \Phi(x_1, y_0) + \Phi(x_1, y_1). \qedhere
\end{align*}
\end{proof}

We now estimate the difference between (\ref{geometric MVT}) evaluated over two different rectangles $R, R'$ by differentiating $\Phi$ along a homotopy between $R, R'$. This estimate will be useful when applying the nonorthogonality hypothesis.

\begin{lemma}
Let $\Phi \in C^3(\RR^d \times \RR^d)$ and let $R_t = [x_0(t), x_1(t)] \times [y_0(t), y_1(t)]$, where $t = 0, 1$ and $x_i(t), y_i(t) \in \RR^d$.
Let $\gamma_t := \gamma_{R_t}$ be the unit bitangent to $R_t$.
Assume that for some $0 \leq \varepsilon_x, \varepsilon_y, c_x, c_y \leq 1$:
\begin{enumerate}
    \item $|x_i(1) - x_i(0)| \leq \varepsilon_x$ and $|y_i(1) - y_i(0)| \leq \varepsilon_y$.
    \item $|x_1(t) - x_0(t)| \leq c_x$ and $|y_1(t) - y_0(t)| \leq c_y$.
\end{enumerate}
Then
\begin{equation}\label{difference of MVTs}
    \left|\int_{R_1} \langle \partial^2_{xy} \Phi, \gamma_1\rangle \dif A_{R_1} - \int_{R_0}  \langle \partial^2_{xy} \Phi, \gamma_0 \rangle \dif A_{R_0}\right| \leq 7 \|\partial^2_{xy} \Phi\|_{C^1} (\varepsilon_x c_y + \varepsilon_y c_x).
\end{equation}
\end{lemma}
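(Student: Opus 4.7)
The plan is to reduce the statement to the fundamental theorem of calculus by building a one-parameter family of rectangles interpolating between $R_0$ and $R_1$. Explicitly, set $x_i(t) := (1-t) x_i(0) + t x_i(1)$ and $y_j(t) := (1-t) y_j(0) + t y_j(1)$ for $t \in [0,1]$, so that the hypotheses of the lemma continue to hold along the homotopy, with $|x_i'(t)| \leq \varepsilon_x$ and $|y_j'(t)| \leq \varepsilon_y$. By the previous lemma applied to each $R_t$,
$$F(t) := \int_{R_t} \langle \partial^2_{xy} \Phi, \gamma_t\rangle \dif A_{R_t} = \Phi(x_0(t), y_0(t)) - \Phi(x_0(t), y_1(t)) - \Phi(x_1(t), y_0(t)) + \Phi(x_1(t), y_1(t)),$$
so the left-hand side of (\ref{difference of MVTs}) equals $F(1) - F(0) = \int_0^1 F'(t) \dif t$, and it suffices to bound $|F'(t)|$ pointwise.

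The key observation when differentiating $F(t)$ via the chain rule is that each first-order derivative of $\Phi$ appears in two terms of the alternating sum with opposite signs, which can be combined by the mean value theorem into a second-order derivative. Specifically, the $x$-derivative contribution to $F'(t)$ is
$$\sum_{i \in \{0,1\}} (-1)^{i} x_i'(t) \cdot \bigl[\partial_x \Phi(x_i(t), y_0(t)) - \partial_x \Phi(x_i(t), y_1(t))\bigr],$$
whose absolute value is at most $2 \varepsilon_x c_y \|\partial^2_{xy} \Phi\|_{C^0}$, since $|y_0(t) - y_1(t)| \leq c_y$. A symmetric estimate bounds the $y$-derivative contribution by $2 \varepsilon_y c_x \|\partial^2_{xy} \Phi\|_{C^0}$. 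Since $\|\partial^2_{xy}\Phi\|_{C^0} \leq \|\partial^2_{xy}\Phi\|_{C^1}$, integration in $t$ yields (\ref{difference of MVTs}) with a constant $2+2 = 4$, which is absorbed into the prefactor $7$.

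There is no essential obstacle; the proof is a straightforward chain-rule calculation together with the identity supplied by the preceding lemma. The constant $7$ is deliberately generous, presumably to absorb cross terms should one prefer to avoid the integral representation and instead interpolate the vertices and bitangent vectors separately in the integral form $\int_0^1\!\!\int_0^1 \partial_v \partial_w \Phi$; in that approach two additional terms of order $\varepsilon_x c_x c_y$ and $\varepsilon_y c_x c_y$ arise from differentiating the Hessian along the homotopy, each of which is dominated by $\varepsilon_x c_y + \varepsilon_y c_x$ using $c_x, c_y \leq 1$.
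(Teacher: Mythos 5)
Your proof is correct, but it takes a genuinely different route from the paper's. The paper differentiates the surface integral directly along the homotopy of rectangles: it parametrizes $R_t$, applies the product rule to $\langle \partial^2_{xy}\Phi\circ\Psi_t, \gamma_t\rangle\,|v_t|\,|w_t|$, and estimates four terms (using the chain rule on $\partial^2_{xy}\Phi$, the derivative of the unit bitangent, and Kato's inequality), which is where the $C^1$ norm of $\partial^2_{xy}\Phi$ and the constant $1+4+2=7$ come from. You instead convert both integrals into the four-point alternating sum of $\Phi$ via the preceding geometric mean value theorem and then run a one-variable FTC/chain-rule computation in the homotopy parameter, pairing the two occurrences of each first derivative of $\Phi$ into a mixed second derivative. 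This is cleaner: it only uses $\|\partial^2_{xy}\Phi\|_{C^0}$ rather than $\|\partial^2_{xy}\Phi\|_{C^1}$, avoids differentiating the bitangent altogether (and hence any worry about intermediate rectangles degenerating, since you only need the identity of the previous lemma at $t=0,1$ and can take the alternating sum itself as the definition of $F(t)$ for $0<t<1$), and yields the sharper constant $2$ in front of $(\varepsilon_x c_y + \varepsilon_y c_x)$ --- your bookkeeping ``$2+2=4$'' is slightly loose, since the two contributions bound the two different products $\varepsilon_x c_y$ and $\varepsilon_y c_x$ separately, but either way the result is dominated by the stated factor $7$. The only price is that your argument is tied to the rectangle-with-product-structure situation where the alternating-sum identity holds, whereas the paper's homotopy computation is the shape of argument one would need if the identity of the previous lemma were not available; for the purposes of this lemma that costs nothing.
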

\begin{proof}
By taking convex combinations, we define $x_i(t)$ and $y_i(t)$ for any $t \in [0, 1]$, hence also $R_t$ and $\gamma_t$.
Now introduce the parametrization 
$$\Psi_t(\xi, \eta) :=
\begin{bmatrix} \xi x_1(t) + (1 - \xi) x_0(t) \\ \eta y_1(t) + (1 - \eta) y_0(t)
\end{bmatrix} \in \RR^d \times \RR^d$$
which maps $[0, 1]^2$ to $R_t$.
Also let $v_t := x_1(t) - x_0(t)$ and $w_t := y_1(t) - y_0(t)$, so $|v_t| |w_t|$ is the (unoriented) Jacobian of the map $\Psi_t$.
We record for later that $|v_t| \leq c_x$ and $|w_t| \leq c_y$.

We estimate 
\begin{align*}
&\left|\int_{R_1} \langle \partial^2_{xy} \Phi, \gamma_1\rangle \dif A_{R_1} - \int_{R_0}  \langle \partial^2_{xy} \Phi, \gamma_0 \rangle \dif A_{R_0}\right| \\
&\qquad = \left|\int_0^1 \partial_t \int_{R_t} \langle \partial^2_{xy} \Phi, \gamma_t\rangle \dif A_{R_t} \dif t\right|  \\
&\qquad \leq \int_0^1 \int_0^1 \int_0^1 |\partial_t (\langle \partial^2_{xy} \Phi \circ \Psi_t(\xi, \eta), \gamma_t\rangle \cdot |v_t| \cdot |w_t|)| \dif \xi \dif \eta \dif t.
\end{align*}
We next split up 
\begin{align*}
&|\partial_t (\langle \partial^2_{xy} \Phi \circ \Psi_t(\xi, \eta), \gamma_t\rangle |v_t| |w_t|)| \\
&\qquad \leq |\langle \partial_t(\partial^2_{xy} \Phi \circ \Psi_t(\xi, \eta)), \gamma_t\rangle| \cdot |v_t| \cdot |w_t|
+ |\langle \partial^2_{xy} \Phi \circ \Psi_t(\xi, \eta), \partial_t \gamma_t\rangle| \cdot |v_t| \cdot |w_t| \\
&\qquad \qquad + |\langle \partial^2_{xy} \Phi \circ \Psi_t(\xi, \eta), \gamma_t\rangle| \cdot |\partial_t |v_t|| \cdot |w_t|
+ |\langle \partial^2_{xy} \Phi \circ \Psi_t(\xi, \eta), \gamma_t\rangle| \cdot |v_t| \cdot |\partial_t |w_t|| \\
&\qquad =: \mathbf{I} + \mathbf{II} + \mathbf{III} + \mathbf{IV}.
\end{align*}
To estimate $\mathbf I$ we compute
$$\partial_t \Psi_t(\xi, \eta) = \begin{bmatrix}\xi (x_1(1) - x_1(0)) + (1 - \xi)(x_0(1) - x_0(0)) \\ \eta (y_1(1) - y_1(0)) + (1 - \eta)(y_0(1) - y_0(0))
\end{bmatrix}$$
and conclude that $\|\partial_t \Psi_t\|_{C^0} \leq \varepsilon_x + \varepsilon_y$.
Therefore, by the chain rule, 
$$\mathbf I \leq \|\nabla \partial^2_{xy} \Phi\|_{C^0} \|\partial_t \Psi_t\|_{C^0} |v_t| \cdot |w_t| \leq \|\partial^2_{xy} \Phi\|_{C^1} c_x c_y (\varepsilon_x + \varepsilon_y) \leq \|\partial^2_{xy} \Phi\|_{C^1} (c_x \varepsilon_y + c_y \varepsilon_x).$$
We furthermore estimate 
$$|\partial_t v_t| = |x_1(1) - x_1(0) - x_0(1) + x_0(0)| \leq 2\varepsilon_x$$
and similarly for $w_t$.
Now to estimate $\mathbf{II}$, we recall
$$\gamma_t = \frac{v_t}{|v_t|} \otimes \frac{w_t}{|w_t|}.$$
So by the product rule,
\begin{align*}
    |\partial_t \gamma_t|
    &\leq \frac{2}{|v_t|} |\partial_t v_t| + \frac{2}{|w_t|} |\partial_t w_t|
    \leq 4\left[\frac{\varepsilon_x}{|v_t|} + \frac{\varepsilon_y}{|w_t|}\right].
\end{align*}
So 
$$\mathbf{II} \leq 4 \|\partial^2_{xy} \Phi\|_{C^0} (c_x \varepsilon_y + c_y \varepsilon_x) \leq 4 \|\partial^2_{xy} \Phi\|_{C^1} (c_x \varepsilon_y + c_y \varepsilon_x).$$
To estimate $\mathbf{III}$, we use Kato's inequality $|\partial_t |v_t|| \leq |\partial_t v_t|$ to bound
\begin{align*}
    \mathbf{III} \leq 2\|\partial^2_{xy} \Phi\|_{C^0} c_y \varepsilon_x \leq 2\|\partial^2_{xy} \Phi\|_{C^1} c_y \varepsilon_x.
\end{align*}
The estimate on $\mathbf{IV}$ is similar but with $x$ and $y$ swapped.
Adding up these terms and integrating, we conclude the result.
\end{proof}


\section{Discretization of sets and measures}\label{discretization}
\subsection{A new discretization}
As in previous works on the fractal uncertainty principle, such as \cite{Bourgain_2018, Dyatlov_2018}, we will discretize fractals as trees.

\begin{defi}
Let $X \subseteq \RR^d$ be a set. A \dfn{discretization} of $X$ is a family $V(X) = (V_n(X))_{n \in \ZZ}$ of sets, where $V_n(X)$ is a set of nonempty subsets of $\RR^d$ such that
\begin{itemize}
    \item $X=\bigcup\{I \cap X:I\in V_n(X)\}$ for each $n$ and the union is disjoint;
    \item for any $I\in V_n(x)$, there exist $I_k\in V_{n+1}(X)$ such that $I=\bigcup\limits_{k} I_k$.
\end{itemize}
Given $I\in \cup_n V_n(X)$, the \dfn{height} of $I$ is defined as $H(I)=\sup\{n: I\in V_n(X)\}$.
\end{defi}

\begin{defi}
For a compact set $X\subset\RR^d$ and base $L\geq 2$, its \dfn{standard $L$-adic discretization} $V^0 = (V_n^0)_{n\in\ZZ}$ is defined by: $I\in V_n^0(X)$ if and only if
$$I=I_n(q) := [q_1, L^{-n} + q_1) \times [q_2, L^{-n} + q_2) \times \cdots \times [q_d, L^{-n} + q_d)$$
for some $q \in L^{-n} \ZZ^d$ and $I\cap X\neq \varnothing$. 
\end{defi}

The standard discretization was used by Bourgain--Dyatlov \cite{Bourgain_2018} to prove the fractal uncertainty principle in the case $d = 1$, $\delta > 1/2$.
The problem with the standard discretization is that a box in $V_n^0(X)$ may be too small for the fractal measure.
Dyatlov--Jin \cite{Dyatlov_2018} addressed this issue in the case $d = 1$, $\delta \leq 1/2$, by considering a discretization that we call the \dfn{merged discretization}.
Unfortunately, if $d \geq 2$ and $\delta \geq 1$, then the merged discretization does not satisfy the desirable estimates, as intimated by the fact that such estimates have a constant of the form $O(1)^{\frac{1}{\delta(1 - \delta)}}$ for $\delta < 1$ in \cite{Dyatlov_2018}. \\\\
We now construct a discretization which is more adapted to our setting. Given a compact convex set $I$ and a real number $\alpha > 0$, we denote by $I\alpha$ the dilation of $I$ by $\alpha$ from its barycenter. For $A,B\subset \RR^d$, we use the $\ell^\infty$ Hausdorff distance
\begin{align*}
    \dist(A,B)=\sup\{|a_i-b_i|:1\leq i\leq d, a=(a_i)\in A, b=(b_i)\in B\}.
\end{align*}

\begin{prop}\label{discret}
For a compact set $X\in\RR^d$, $N\in\NN$, $L \geq 10^3$,
there is a discretization $V(X)$ of $X$ such that for $I\in V_n(X)$, $1\leq n\leq N$,
\begin{itemize}
    \item there exists $I^0\in V^0_n(X)$ such that
\begin{equation}\label{perturbed dist is perturbation}
I^0(1-L^{-2/3}) \subset I\subset I^0(1+L^{-2/3}),
\end{equation}
    \item and there exists a point $x_0$ in $X \cap I$ such that
\begin{equation}\label{good cube condition}
    \dist(x_0, \partial{I}) \geq \frac{1}{10}L^{-2/3-n}.
\end{equation}
\end{itemize}
\end{prop}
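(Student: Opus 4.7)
The plan is to construct $V_n(X)$ by perturbing the standard $L$-adic grid inductively on $n$. For each level $n$, coordinate $j \in \{1, \dots, d\}$, and $k \in \ZZ$, I select a shift $\sigma_{j,k}^n$ with $|\sigma_{j,k}^n| \leq \tfrac{1}{2} L^{-2/3-n}$ and place the hyperplane perpendicular to $e_j$ at the perturbed position $k L^{-n} + \sigma_{j,k}^n$. Since children must tile parents exactly, a face common to a level-$n$ and level-$(n+1)$ tile has a single absolute position, forcing the inheritance $\sigma_{j,kL}^{n+1} = \sigma_{j,k}^n$; the effective shift bound on an inherited face is thus the tighter fine-level bound. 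The tiles $V_n(X)$ are the perturbed axis-aligned boxes that meet $X$, and condition \eqref{perturbed dist is perturbation} is immediate from the shift bound.

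For the deep-point condition \eqref{good cube condition}, I designate a candidate $x_0(I^0) \in X \cap I^0$ for each $I^0 \in V_n^0(X)$ and seek shifts such that $x_0(I^0)$ lies at $\ell^\infty$-distance $\geq \tfrac{1}{10} L^{-2/3-n}$ from every face of the perturbed tile $I$. A direct calculation at a face shared between adjacent standard boxes $I^0_\pm$ shows that the resulting constraint on the single shift $\sigma_{j,k}^n$ is a nonempty sub-interval of the admissible range provided the perpendicular distances of $x_0(I^0_\pm)$ from the shared face sum to at least $\tfrac{1}{5} L^{-2/3-n}$. So the construction reduces to choosing designated points satisfying this per-face separation condition across the grid.

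I propose the following greedy rule for the selection. For each $I^0$, first try to pick $x_0(I^0) \in X \cap I^0$ at distance $\geq \tfrac{1}{5} L^{-2/3-n}$ from every face of $I^0$; any such choice automatically discharges every face constraint involving $I^0$, and the fresh shifts may be taken to be zero. If no such deep-in-$I^0$ point exists, then $X \cap I^0$ lies entirely in the $\tfrac{1}{5} L^{-2/3-n}$-boundary layer of $I^0$; in that case I select $x_0(I^0)$ jointly with the shifts of the offending faces, pushing those faces outward so that the boundary mass is absorbed into the neighboring perturbed tile and an interior candidate is retained in $I^0$.

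The main obstacle will be coordinating these choices at faces shared by two such bad boxes, while simultaneously respecting the inheritance constraint that propagates coarse-level shifts up to finer levels. I expect to resolve these conflicts using the factor-$5$ slack between the length of the shift interval and the required avoidance radius, together with local reselection of candidates: since each bad box's near-boundary concentration is localized and determined only by $X \cap I^0$, a conflict between adjacent bad boxes involves only one specific face direction and can be broken by adjusting the candidate on one side. Iterating this procedure for $n = 1, \ldots, N$ and tracking the inheritance of shifts between levels produces the desired discretization.
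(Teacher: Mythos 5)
Your construction perturbs each grid hyperplane $\{x_j = kL^{-n}\}$ as a whole, by a single shift $\sigma_{j,k}^n$. This makes the deep-point constraint \eqref{good cube condition} a \emph{global} constraint along the entire hyperplane, not the per-face, per-pair condition your "direct calculation at a face shared between adjacent standard boxes" analyzes, and it can be genuinely unsatisfiable. Concretely (say $d=2$): place finitely many points of $X$ in distinct rows along one level-$n$ hyperplane, each point deep with respect to all transverse faces, with signed distances to the hyperplane forming a net of $[-\tfrac{1}{2}L^{-2/3-n},\tfrac{1}{2}L^{-2/3-n}]$ of mesh $<\tfrac{1}{5}L^{-2/3-n}$, and each point the \emph{only} point of $X$ in its pair of boxes. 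A point at signed distance $t$ is at distance $\geq \tfrac{1}{10}L^{-2/3-n}$ from the shifted hyperplane only if the shift $s$ avoids the interval $(t-\tfrac{1}{10}L^{-2/3-n},\,t+\tfrac{1}{10}L^{-2/3-n})$; with the chosen net these forbidden intervals cover the whole admissible range $|s|\leq\tfrac{1}{2}L^{-2/3-n}$, so no admissible shift works, and since each point is the unique candidate in its tile, "local reselection of candidates" offers no escape. Letting the shift vary along the hyperplane abandons the box structure and reintroduces exactly the issues (mass concentrated near edges and corners where up to $2^d$ tiles meet, and keeping the modifications of different tiles disjoint) that your plan never addresses and that the paper's proof handles via the induction on type with tubular neighborhoods and Invariants on badness, type, and closeness.

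The inheritance rule $\sigma_{j,kL}^{n+1}=\sigma_{j,k}^n$ creates a second, independent obstruction. If, as you state, the effective bound on an inherited face is the fine-level one, then every coarse face shared with level $n+1$ can move by at most $\tfrac{1}{2}L^{-2/3-(n+1)}$, which for $L\geq 10^3$ is far smaller than the $\tfrac{1}{10}L^{-2/3-n}$ displacement needed when $X\cap I^0$ hugs that coarse face; if instead the coarse budget propagates down, the children abutting that face are displaced by more than their own side length and \eqref{perturbed dist is perturbation} fails at level $n+1$. So exact nesting via shared shifted hyperplanes is incompatible with the freedom the deep-point argument requires. The paper sidesteps this by decoupling the two tasks: it first produces good (non-nested) tiles at each scale separately, and only afterwards restores the tree structure by reassigning each level-$(n+1)$ tile to one level-$n$ tile and redefining the parent as the union of its children, which moves parent boundaries by only $2L^{-n-1}\leq\tfrac{1}{10}L^{-n-2/3}$ and merely degrades the interior constant from $\tfrac{1}{5}$ to $\tfrac{1}{10}$. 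Some mechanism of this kind (or an entirely different way to reconcile nesting with scale-$n$ face mobility) is missing from your proposal, so as written the approach does not go through.
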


We call this discretization the \dfn{perturbed standard discretization}, and we call elements of the perturbed standard discretization \dfn{tiles} (to emphasize that they may not be cubes).

\begin{remark}
Christ \cite{christ1990b} constructed dyadic cubes with similar properties for metric spaces with a doubling measure $\mu$ as in Definition \ref{def:doubling measure}. It's possible that the construction there can also be applied to prove Theorem \ref{main theorem}. Our construction is less general but does not rely on the existence of a doubling measure.
\end{remark}

\begin{figure}[h]
\centering
\includegraphics[width=0.30\textwidth]{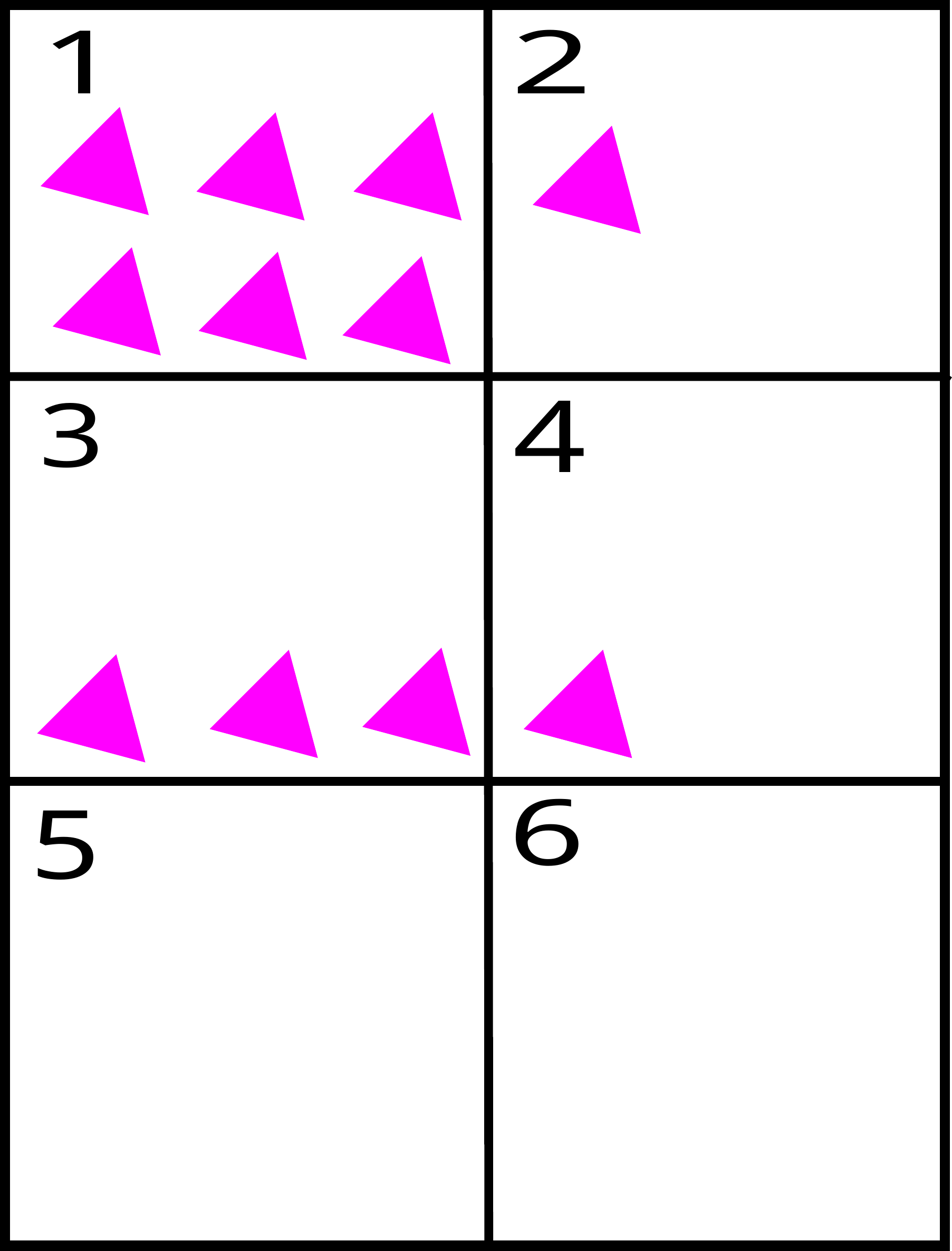}
\includegraphics[width=0.30\textwidth]{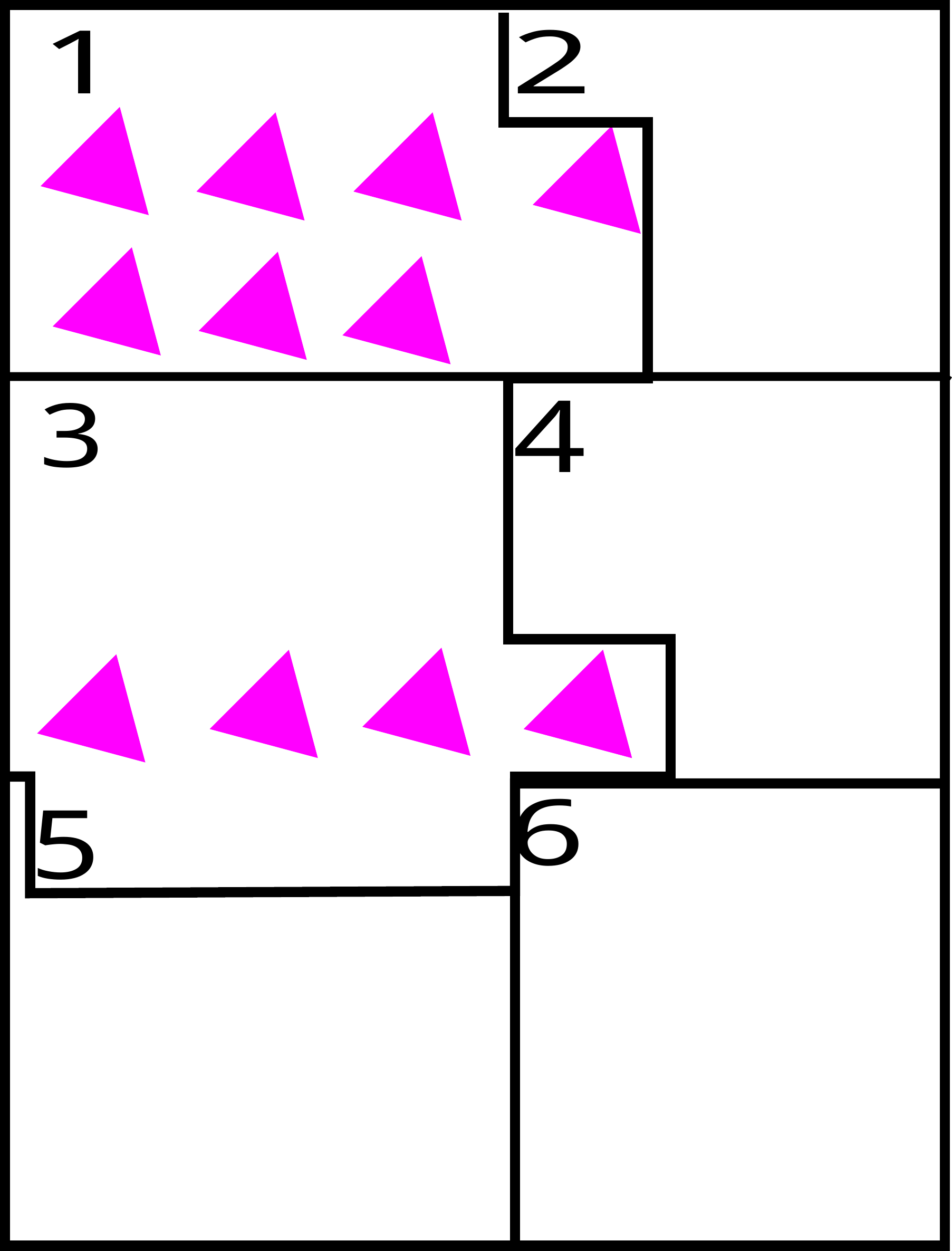}
\caption{A standard (left) and perturbed standard (right) discretization. On the left, Cube 1 is type $2$, Cubes 2 and 3 are type $1$, Cube 4 is type $0$, and Cubes 5 and 6 are type $-1$; on the right, Tiles 1 and 3 are good and all other tiles are type $-1$.}
\label{figure2}
\end{figure}

\subsection{Constructing the new discretization}
\subsubsection{Preliminaries}
We establish some terminology and notation that we will use in the construction of the new discretization.
Let $I$ be a cube, such that $\bar{I}=[a_1,b_1]\times \cdots [a_d,b_d]$. For $1\leq k\leq d$, define the $k$-boundary
\begin{align*}
    \partial^k I:=\bigcup\limits_{j_1,\cdots,j_k} [a_1,b_1]\times \cdots \times \{a_{j_i},b_{j_i}\}\times \cdots\times [a_d,b_d].
\end{align*}
For a set $A\in \RR^d$, $r>0$, let the $\ell^\infty$ ball around $A$ with radius $r$ be 
\begin{align*}
    B_\infty(A,r)=\{x\in\RR^d:\exists a\in A, |a-x|_{\ell^\infty}<r\}.
\end{align*}
We stress that a $B$ without a subscript refers to the $\ell^2$ ball (and in particular, the balls in the definition of nonorthogonality are $\ell^2$ balls!)

For a subset $P\subset \partial^kI$ of the $k$-boundary of a cube $I$, suppose without loss of generality that 
\begin{align*}
    P\subseteq \{a_1,b_1\}\times \cdots \times \{a_k,b_k\}\times [a_{k+1},b_{k+1}]\times \cdots\times [a_d,b_d].
\end{align*}
In that case, we define the \dfn{tubular neighbourhoods}
\begin{align*}
    B^t_\infty(P,r):=\{x\in\RR^d: \exists y=(y_i)\in P, |x_1-y_1|<r,\cdots, |x_{k}-y_{k}|<r, x_{k+1}=y_{k+1},\cdots, x_d=y_d \}
\end{align*}
and
\begin{align*}
    B^t_\infty(P,r_1,r_2):=B^t_\infty(P,r_1)\cup B_\infty(P,r_2).
\end{align*}
We define other cases similarly.

Let $V^0(X)$ be the standard discretization and $n \leq N$. We divide the cubes $I \in V_n^0(X)$ into the following types:
\begin{itemize}
    \item $I$ is of type $d$ if there exists a point $x\in X\cap I$ such that $\dist(x,\partial I)>L^{-2/3-n}/2$;
    \item $I$ is of type $d-1$ if there exists a point $x\in X\cap I$ with $\dist(x,\partial^2 I)>L^{-2/3-n}/2$ but it is not of type $d$;
    \item $I$ is of type $d-2$ if there exists a point $x\in X\cap I$ with $\dist(x,\partial^3 I)>L^{-2/3-n}/2$, but not of type $\geq d - 1$;
    \item $\cdots$;
    \item $I$ is of type $0$ if $X\cap I$ is nonempty and $\dist(X\cap I,\partial^{d}I)\leq L^{-2/3-n}/2$;
    \item $I$ is of type $-1$ if $X \cap I$ is empty.
\end{itemize}
See Figure \ref{figure2}.

We want to modify the cubes $I \in V_n^0(X)$ into tiles $T$ so that there exists $x_0 \in X \cap T$ satisfying
\begin{equation}\label{good cube condition 2}
    \dist(x_0, \partial{T}) \geq \frac{1}{5} L^{-2/3-n}.
\end{equation}
We say that a tile $T$ is \emph{good} if (\ref{good cube condition 2}) holds, and otherwise that it is \emph{bad}. For the remainder of the proof, we assume:

\begin{invariant}\label{badness invariant}
If a tile $T$ constructed from a cube $I$ is bad, then $T \subseteq I$.
\end{invariant}

This invariant is true at the current stage of the proof; we necessarily have $T = I$, since we have not modified any tiles yet.

We want to do induction on the type of the tiles. In order to do so, we will need a notion of ``type" for a bad tile. 
By Invariant \ref{badness invariant}, in order for type to be well-defined, it suffices to define the type of a tile $T$ which was modified from a cube $I$ such that $T \subseteq I$.
In that case, we define the \emph{type} of $T$ to be $k$ if $I$ is of type $k$ with respect to $X\cap T$; that is, if $I$ has type $k$ in $V_n(X \cap T)$ where $V(X \cap T)$ consists of the restriction of elements of $V(X)$ that we are already defined to $T$.

\subsubsection{Induction on type}
We now induct backwards on the largest type $k$ of a bad tile.
We make the following inductive assumptions, which are vacuous  at the start of the inductive process, when $k = d - 1$:

\begin{invariant}\label{bad tiles have low type}
Every bad tile has type $\leq k$.
\end{invariant}

\begin{invariant}\label{closeness invariant}
If a tile $T$ was constructed from a cube $I$, then $\dist(\partial T, \partial I) \leq L^{-n-2/3}/2$.
\end{invariant}

\begin{lemma}\label{improving invariants}
Assume that $0 \leq k \leq d - 1$, and the above set of tiles satisfies Invariants \ref{badness invariant}, \ref{bad tiles have low type}, and \ref{closeness invariant}. Then we may modify each tile to obtain a new set of tiles satisfying Invariants \ref{badness invariant}, \ref{bad tiles have low type}, and \ref{closeness invariant}, but with $k$ replaced by $k - 1$.
\end{lemma}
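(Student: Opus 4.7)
Fix a bad tile $T$ of type exactly $k$ (tiles of type $< k$ will be touched only as shared-face neighbors of such $T$'s). By the type hypothesis there is a witness $x \in X \cap T$ with $\dist(x, \partial^{d-k+1} T) > L^{-2/3-n}/2$; by the badness of $T$, $\dist(x, \partial T) < L^{-2/3-n}/5$. A key elementary observation, which I would record as a short sublemma, is that $x$ lies within $L^{-2/3-n}/5$ of at most $d-k$ distinct codim-$1$ faces of $T$: if $x$ were within this distance of $d-k+1$ such faces, it would also lie within this distance of their common intersection, a codim-$(d-k+1)$ face inside $\partial^{d-k+1}T$, contradicting the type condition. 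Call these close faces $F_1, \ldots, F_j$, $1 \leq j \leq d-k$.

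The core modification is to push each $F_i$ outward (away from $x$) by an amount $\eta_i \leq L^{-2/3-n}/5$ just large enough that $\dist(x, F_i') \geq L^{-2/3-n}/5$ in the resulting tile $T'$, so that $x$ witnesses the goodness of $T'$. The budget for such displacements is controlled by Invariant \ref{closeness invariant}: the distance from the modified face to the original cube face $\partial I$ must remain at most $L^{-n-2/3}/2$. Since $d \cdot L^{-2/3-n}/5$ can exceed this budget when $d \geq 3$, I would organize the pushes face-by-face rather than tile-by-tile: each face in the original grid is touched at most twice across the entire outer induction (once from the side of its first-processed bad adjacent tile, and possibly once more from the other side, if that tile is later processed), and the second push reuses the already-moved position of the face when possible. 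This gives total displacement at most $2L^{-2/3-n}/5 < L^{-n-2/3}/2$.

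The main obstacle is controlling the effect on neighbors. When $F_i$ is pushed into $T_{\text{adj}}$, the adjacent tile loses a slab of width $\eta_i$; I would argue case by case that $T_{\text{adj}}$ does not become bad of type $\geq k$. If $T_{\text{adj}}$ is empty (type $-1$), no issue arises. If $T_{\text{adj}}$ is bad of type $< k$, its witness's distance to the codim-$(d-k+2)$ skeleton changes by at most $\eta_i \ll L^{-2/3-n}/2$, so its type is preserved. If $T_{\text{adj}}$ is good, the decrease in the good point's boundary distance by $\eta_i$ is absorbed by working at each stage with a slightly strengthened good-threshold (the final threshold in Proposition \ref{discret} is only $L^{-2/3-n}/10$, giving slack beneath the working threshold $L^{-2/3-n}/5$). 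The delicate subcase is when $T_{\text{adj}}$ is also bad of type $k$ with its own witness $x_{\text{adj}}$ within $L^{-2/3-n}/5$ of $F_i$: here the two sides' desired push directions conflict. I would resolve this by pushing $F_i$ far enough into $T_{\text{adj}}$ to absorb $x_{\text{adj}}$ into $T'$, making $T'$ a single enlarged good tile containing both witnesses (which is possible since both sit within $L^{-2/3-n}/5$ of $F_i$, hence become interior after absorption), while $T_{\text{adj}}'$ loses its close $X$-points and is either empty, still good, or bad of type strictly less than $k$.

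Finally, Invariants \ref{badness invariant}, \ref{bad tiles have low type} at level $k-1$, and \ref{closeness invariant} are verified by collecting the case analysis: tiles that remain bad after modification are only ever shrunk, keeping them inside their original cubes (Invariant \ref{badness invariant}); every former type-$k$ bad tile has been turned into a good tile or one of type $< k$, while no neighbor's type has been raised (Invariant \ref{bad tiles have low type}); and the face displacements sum within the declared budget (Invariant \ref{closeness invariant}). The hardest technical point is the face-pairing bookkeeping needed to guarantee that the total displacement of every face is at most $L^{-n-2/3}/2$ across the full run of stages $k = d-1, d-2, \ldots, 0$.
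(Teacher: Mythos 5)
Your mechanism is genuinely different from the paper's, and it has a gap that I do not think can be patched as stated. The paper never shrinks a good tile: for each relevant component $P$ of the cube's skeleton it either donates the offending tubular neighborhood $B^t_\infty(P,L^{-2/3-n}/2)\cap(T'\cup T)$ of the \emph{bad} tile $T$ to an adjacent \emph{good} tile $T'$ (so $T$ loses the near-boundary points of $X$ witnessing its type, and $T'$, being only enlarged, stays good), and only when every tile adjacent to $P$ is bad does it enlarge $T$ itself, using a two-radius neighborhood $B^t_\infty(P,L^{-2/3-n}/2,L^{-2/3-n}/4)$ precisely so that distinct enlargements are disjoint and never collide with earlier transfers (this is where Invariant \ref{badness invariant} is used). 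Your scheme instead pushes entire codimension-one faces of the bad tile \emph{into} its neighbors, so a good neighbor loses a slab of width up to $L^{-2/3-n}/5$. The ``slack'' you invoke is only $L^{-2/3-n}/10$ (the gap between the working threshold $/5$ and the final threshold $/10$ in Proposition \ref{discret}), which is strictly smaller than a single push; moreover that slack is already spent by the later tree-structuring step, which moves boundaries by up to $2L^{-n-1}$. So a good neighbor whose witness sits at distance exactly $L^{-2/3-n}/5$ from the pushed face can be made bad again, possibly of type $\geq k$, and the induction (and termination of your face-by-face iteration) breaks down.

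There is a second, quantitative gap in your bookkeeping for Invariant \ref{closeness invariant}. In your ``delicate subcase'' you must push a face past the neighbor's witness and then a further $L^{-2/3-n}/5$ beyond it, i.e.\ by up to roughly $2L^{-2/3-n}/5$ in a single step; your budget estimate ``at most two pushes of $L^{-2/3-n}/5$ each'' does not cover this, and two such absorption pushes on one grid face would exceed the allowed $L^{-n-2/3}/2$. You also never control the \emph{lateral} boundaries created where slabs from different pushed faces meet near edges and corners of the grid: an absorbed witness can end up arbitrarily close to the side of the slab that absorbed it, and nearby tiles' witnesses can be close to these new boundary pieces. This interaction problem is exactly what the paper's restriction to components $P$ of $\partial^{k}I\setminus B_\infty(\partial^{k+1}I,L^{-2/3-n}/2)$, together with the disjointness of the two-radius tubular neighborhoods, is designed to eliminate; your proposal has no analogue, and the claim that ``each face is touched at most twice'' is asserted rather than proved.
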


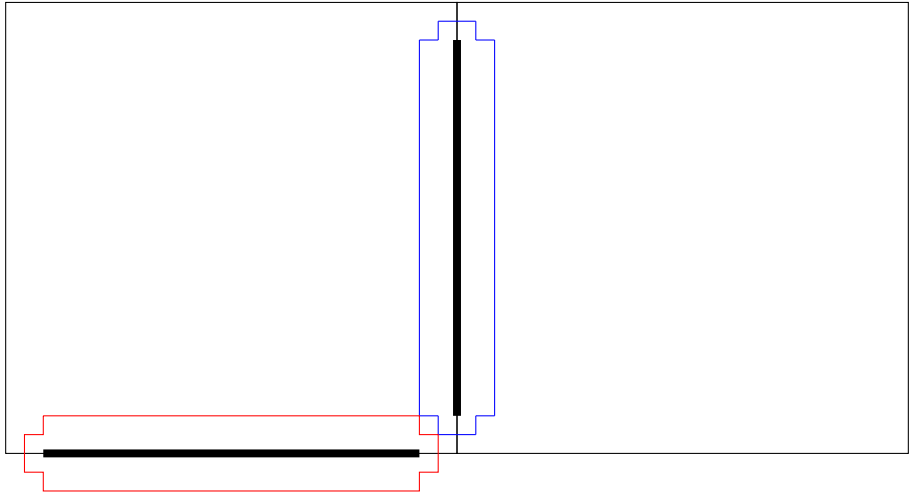
\begin{figure}
    \centering
\begin{tikzpicture}
\draw (-3,0) rectangle +(6,6) ;
\draw (3,0) rectangle +(6,6);
\draw [line width=3]  (3,5.5) -- (3,0.5) ;
\draw[blue] (2.75,5.75) -- (3.25,5.75) ;
\draw[blue]  (2.75,5.75) -- (2.75,5.5) ;
\draw[blue]  (2.5,5.5) -- (2.75,5.5) ;
\draw[blue]  (2.5,5.5) -- (2.5,0.5) ;
\draw[blue]  (2.5,0.5) -- (2.75,0.5) ;
\draw[blue]  (2.75,0.5) -- (2.75,0.25) ;
\draw[blue]  (2.75,0.25) -- (3.25,0.25) ;
\draw[blue]  (3.25,0.25) -- (3.25,0.5) ;
\draw[blue]  (3.25,0.5) -- (3.5,0.5) ;
\draw[blue]  (3.5,0.5) -- (3.5,5.5) ;
\draw[blue]  (3.5,5.5) -- (3.25,5.5) ;
\draw[blue]  (3.25,5.5) -- (3.25,5.75);
\draw[line width=3] (-2.5,0) -- (2.5,0);
\draw[red] (-2.5,0.5) -- (2.5,0.5) -- (2.5,0.25) -- (2.75,0.25) -- (2.75,-0.25) -- (2.5,-0.25) -- (2.5,-0.5) -- (-2.5,-0.5) -- (-2.5,-0.25) -- (-2.75,-0.25 ) -- (-2.75,0.25) -- (-2.5,0.25) -- (-2.5,0.5) ;
\end{tikzpicture}
\caption{The proof of Lemma \ref{improving invariants}, Case (2). The bold lines represent components $P$ of the boundary. The tubular neighborhoods around them do not intersect.\label{tubularfig}}
\end{figure}

\begin{proof}
Let $T$ be a bad tile of type $k$ modified from some cube $I$, and let $P$ be a connected component of $\partial^{k}I\setminus B_\infty(\partial^{k+1}I, L^{-2/3-n}/2)$ such that $B^t(P,L^{-n-2/3}/5)\cap X\cap T\neq \varnothing$.
We modify the adjacent tiles to $P$:
\begin{enumerate}
    \item If there is a good tile $T' \neq T$ adjacent to $P$, then we enlarge $T'$ to contain the tubular neighborhood $\mathcal T := B^t_\infty(P,L^{-2/3-n}/2) \cap (T' \cup T)$. Then:
\begin{enumerate}
    \item $T'$ is still good.
    \item $T$ no longer contains $P$.
    \item Since $\mathcal T$ is contained in $T' \cup T$, no other tile is affected.
\end{enumerate}
    \item Otherwise, by Invariant \ref{bad tiles have low type}, every tile adjacent to $P$ has type $\leq k$.
    In this case, we enlarge $T$ by a tubular neighborhood $\mathcal T := B^t_\infty(P,L^{-2/3-n}/2, L^{-2/3-n}/4)$. Then:
\begin{enumerate}
    \item $\mathcal T$ is disjoint from all other tubular neighborhoods of this form. See Figure \ref{tubularfig}.
    \item Prior to this step, every tile $T'$ adjacent to $P$ was bad, so by Invariant \ref{badness invariant}, $T'$ was contained in the cube $I'$ it was modified from. If $\mathcal T'$ is a tubular neighborhood transferred between tiles in a previous step, and $\mathcal T \cap \mathcal T'$ is nonempty, then there exists $T'$ adjacent to $P$ containing $\mathcal T'$, but $T'$ is not contained in $I'$, which is a contradiction. Therefore $\mathcal T$ is disjoint from all tubular neighborhoods transferred between tiles in a previous step.
    \item $T$ becomes good.
    \item Every tile $T' \neq T$ adjacent to $P$ no longer contains $P$.
\end{enumerate}
\end{enumerate}
We iterate the above procedure over all possible components $P$, stopping once there are no more components to consider. This happens after finitely many stages, because of the following facts:
\begin{enumerate}
\item If a tubular neighborhood of a component $P$ is absorbed by a tile $T$ of type $k$, and its other neighboring tile is $T'$, then $T$ becomes good, and $P$ can no longer witness that $T'$ has type $\geq k$. Therefore we will not iterate over $P$ again.
\item At each stage, no new bad tiles are created, and no bad tiles are given more points and remain bad. Therefore Invariants \ref{badness invariant} and \ref{bad tiles have low type} are preserved.
\item Invariant \ref{closeness invariant} is preserved, because if $T$ was constructed from $I$, then we only modify $T$ in a neighborhood of distance $L^{-n-2/3}/2$ of $\partial I$.
\end{enumerate}

After iterating over all possible components $P$, Invariant \ref{bad tiles have low type} is improved, so that every bad tile has type $\leq k - 1$.
Indeed, if $T$ is still bad, and was type $k$, then every tubular neighborhood of a component $P$ which could witness that $T$ had type $k$ was absorbed into a neighboring tile, so $T$ must have type $\leq k - 1$.
\end{proof}

After stage $k = 0$, every bad tile has type $-1$ by Invariant \ref{bad tiles have low type}.
However, if $T$ is a tile of type $-1$, then by definition $X \cap T \cap I$ is empty.
Then, by Invariant \ref{badness invariant}, $X \cap T$ is empty, and we may discard the tile $T$ entirely.

Let $\tilde V_n(X)$ be the set of good tiles that were constructed from $V_n(X)$ by the above procedure.
Then every tile in $\tilde V_n(X)$ satisfies (\ref{good cube condition 2}), and
$$X = \bigsqcup_{T \in \tilde V_n(X)} T \cap X.$$
However, $\tilde V(X)$ may not have a tree structure, so it is not a discretization. 

\subsubsection{Obtaining a tree structure}
We now modify $\tilde V(X)$ to a discretization $V(X)$.
We again proceed by induction.
For $n > N$, let $V_n(X) = V_n^0(X)$.
Now suppose that $n \leq N$ and we have constructed $(V_m(X))_{m \geq n + 1}$, to be a discretization of $X$. 
For each element $T \in \tilde{V}_n(X)$, we define subsets $\mathcal{C}(T)$ of $V_{n + 1}(X)$ as follows:
\begin{itemize}
    \item $\mathcal{C}(T)$ are all disjoint and their disjoint union is all of $V_{n + 1}(X)$.
    \item If $S \in V_{n + 1}(X)$ and $S \subseteq T$, then $S \in \mathcal{C}(T)$.
    \item If $S \in V_{n + 1}(X)$ intersects multiple $T$, then we pick one $T$ for which $S$ lies in $\mathcal{C}(T)$.
\end{itemize}
We now define $V_n(X) = \{\bigcup_{S \in \mathcal{C}(T)} S: T \in \tilde{V}_n(X)\}$. Thus, for each $I \in V_n(X)$, there exists an element $T \in \tilde{V}_n(X)$ such that
$$\dist(\partial T, \partial I) \leq 2L^{-n-1} \leq \frac{1}{10} L^{-n-2/3}$$
(where the second inequality is because $L \geq 10^3$), and for $x \in T$ satisfying (\ref{good cube condition 2}), $x \in I$.
Then for every $x \in X$ there exists a unique $I' \in V_{n + 1}(X)$ containing $x$ by our inductive assumption, and a unique $I \in V_n(X)$ which is a superset of $I'$, by the fact that $\{\mathcal C(T): T \in \tilde V_n(X)\}$ is a partition of $V_{n + 1}(X)$.
It follows that $(V_m(X))_{m \geq n}$ is a discretization of $X$.

By construction, there exists $x_0 \in X \cap T$ satisfying (\ref{good cube condition 2}), hence 
$$\dist(x_0, \partial I) \geq \dist(x_0, \partial T) - \dist(\partial I, \partial T) \geq \left(\frac{1}{5} - \frac{1}{10}\right) L^{-n-2/3} = \frac{1}{10} L^{-n-2/3}$$
and hence $x_0$ satisfies (\ref{good cube condition}).
If we denote by $I^0$ the cube that we modified to create $T$, then by Invariant \ref{closeness invariant},
$$\dist(\partial I^0, \partial I) \leq \dist(\partial I^0, \partial T) + \dist(\partial T, \partial I) \leq \left(\frac{1}{2} + \frac{1}{10}\right) L^{-n-2/3},$$
which one can use to show (\ref{perturbed dist is perturbation}). 
This completes the proof of Proposition \ref{discret}.

\subsection{Regularity of the discretization}
We now show that if the compact set $X$ is the support of a doubling measure, then its perturbed standard discretization $V(X)$ satisfies regularity conditions similar to those established in \cite[Lemma 2.1]{Dyatlov_2018} for the merged discretization in the case $d = 1$.

We begin by showing that every pair of tiles $(I, J) \in V_n(X) \times V_m(Y)$ have children which contain points for which the analogue 
$$|\Phi(x_0, y_0) - \Phi(x_0, y_1) - \Phi(x_1, y_0) + \Phi(x_1, y_1)| \gtrsim |x_0 - x_1| \cdot |y_0 - y_1|$$
of the reverse Cauchy--Schwarz inequality for the indefinite inner product $\partial^2_{xy} \Phi$ holds.
This is the key new estimate needed in the higher-dimensional case:

\begin{figure}[h]

\centering
\includegraphics[scale=0.7]{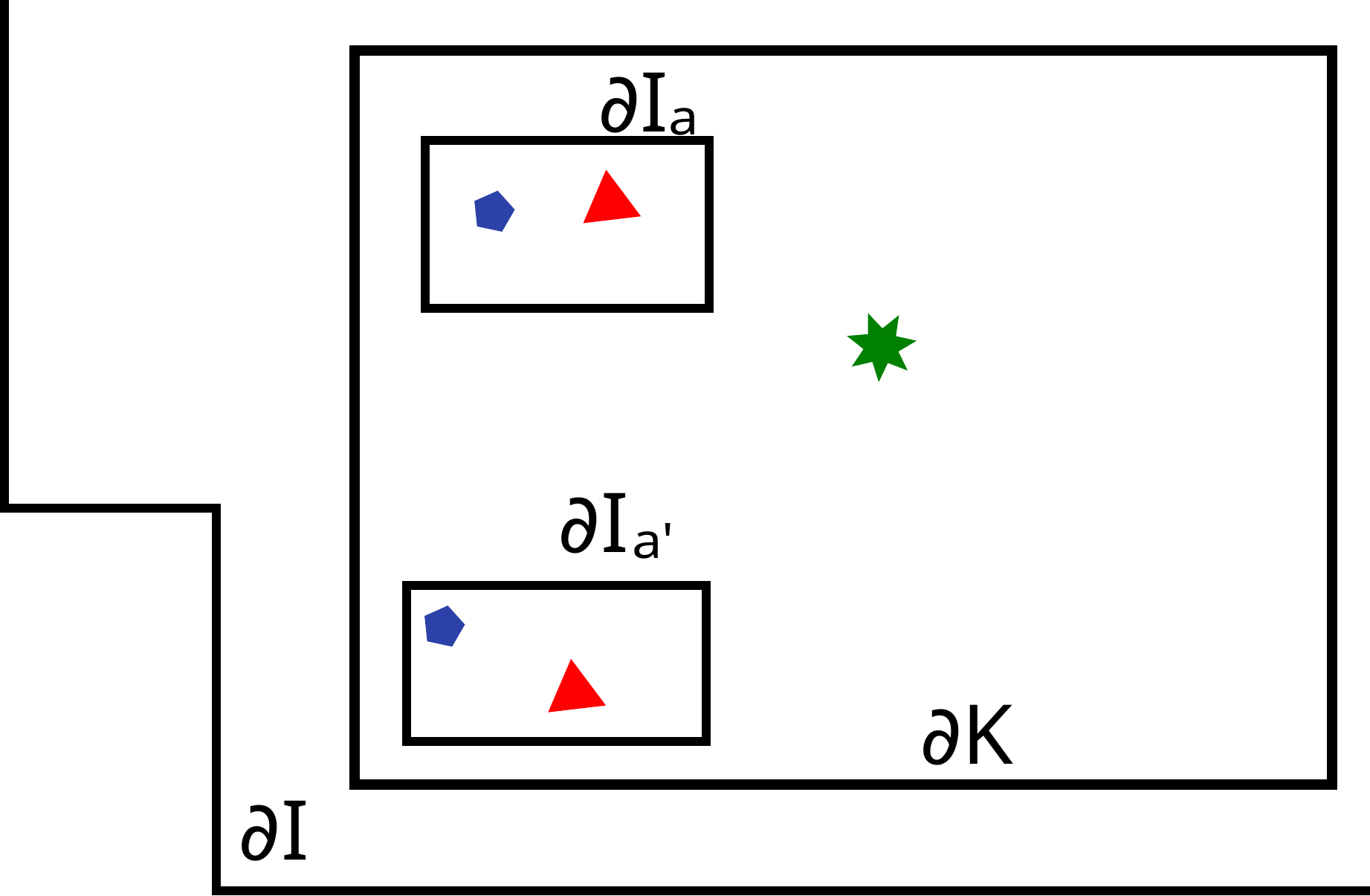}
\caption{A typical situation in the proof of Lemma \ref{DJ213}. The child tiles $I_a, I_{a'}$ are contained in the cube $K \subset I$, and are much smaller than $I$. The green $7$-gon denotes $\underline x$, the red triangles denote $\tilde x_a, \tilde x_{a'}$, and the blue pentagons denote $x_a, x_{a'}$.}
\label{figure3}
\end{figure}

\begin{lemma}\label{DJ213}
Let $\Phi \in C^3(\RR^d \times \RR^d)$, and let $X, Y \subseteq \RR^d$ be $\Phi$-nonorthogonal with constant $c_N$ from scales $(L^{-K_X},L^{-K_Y})$ to $1$.
Let $V(X), V(Y)$ be the perturbed standard discretizations of $X, Y$.
Then for
\begin{equation}\label{first estimate on L}
L \geq \max(180^3,  10^{10} c_N^{-3} \|\partial^2_{xy} \Phi\|_{C^1}^3) d^{3/2},
\end{equation}
and every $n < K_X$, $m < K_Y$, $I \in V_n(X)$, $J \in V_m(Y)$, there exist children $I_a, I_{a'}$ of $I$ and $J_b, J_{b'}$ of $J$ such that for every $x_\alpha \in I_\alpha$, $y_\beta \in J_\beta$, and $\omega_{\alpha \beta} := \Phi(x_\alpha, y_\beta)$,
we have the \dfn{reverse Cauchy--Schwarz inequality}
\begin{equation}\label{RCS}
    \frac{c_N}{1000}\leq L^{m+n+4/3} |\omega_{ab} - \omega_{a'b} - \omega_{ab'} + \omega_{a'b'}| \leq \frac{\|\partial^2_{xy} \Phi\|_{C^0}}{20}.
\end{equation}
and the \dfn{even spacing condition}
\begin{equation}\label{even spacing}
  L^{n + 2/3} |x_a - x_{a'}|, L^{m + 2/3} |y_b - y_{b'}| \leq \frac{1}{2}.
\end{equation}
Moreover, we may assume
\begin{equation}\label{line segment_1}
    \text{ for any }\, 
    x_a\in I_a, x_{a'}\in I_{a'},\,\text{ the line segment } \, \overline{x_a x_{a'}}\, \text{ always lies in }\, I.
\end{equation}
\end{lemma}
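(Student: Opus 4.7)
My plan is to apply the nonorthogonality hypothesis at a carefully chosen scale just slightly smaller than $L^{-n-2/3}$, which will produce four ``seed points" whose phases satisfy a good lower bound; the child tiles containing them will be our candidates. The main ingredients will be Proposition \ref{discret}, the nonorthogonality formula (\ref{nonorthogonality formula}), the geometric mean value theorem (\ref{geometric MVT}), and its perturbation companion (\ref{difference of MVTs}).

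First, Proposition \ref{discret} furnishes a point $x_0\in X\cap I$ with $\dist(x_0,\partial I)\geq \frac{1}{10}L^{-n-2/3}$, and similarly $y_0\in Y\cap J$. Fix a small constant $c\in (0,1/10)$ (I expect $c=1/20$ to work after balancing). Apply Definition \ref{def:nonorthogonality} at the points $x_0,y_0$ with $r_X=cL^{-n-2/3}$ and $r_Y=cL^{-m-2/3}$; the hypothesis $n<K_X$, $m<K_Y$ combined with $L$ large ensures $r_X\in(L^{-K_X},1)$ and similarly for $r_Y$. This produces $\tilde x_1,\tilde x_2\in X\cap B(x_0,r_X)$ and $\tilde y_1,\tilde y_2\in Y\cap B(y_0,r_Y)$ with phase difference bounded below by $c_N r_X r_Y = c_N c^2 L^{-n-m-4/3}$. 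Let $I_a,I_{a'}\in V_{n+1}(X)$ be the children of $I$ containing $\tilde x_1,\tilde x_2$, and $J_b,J_{b'}$ likewise.

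Next, I would verify $I_a\neq I_{a'}$: combining the lower bound with the trivial geometric MVT estimate $|\text{phase}|\leq\|\partial^2_{xy}\Phi\|_{C^0}|\tilde x_1-\tilde x_2||\tilde y_1-\tilde y_2|$ and the enclosing ball bound $|\tilde y_1-\tilde y_2|\leq 2r_Y$ yields $|\tilde x_1-\tilde x_2|\gtrsim (c_N/\|\partial^2_{xy}\Phi\|_{C^0})r_X$, which under the lower bound (\ref{first estimate on L}) on $L$ greatly exceeds the diameter $\sqrt d\,L^{-n-1}$ of a child tile (using (\ref{perturbed dist is perturbation}) to control tile versus cube size). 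Similarly $J_b\neq J_{b'}$. To pass from the seed points to arbitrary $x_\alpha\in I_\alpha$, $y_\beta\in J_\beta$, apply the perturbation estimate (\ref{difference of MVTs}) with $\varepsilon_x,\varepsilon_y\lesssim L^{-n-1},L^{-m-1}$ (child diameter) and $c_x\leq 2r_X$, $c_y\leq 2r_Y$, so the drift between the two rectangles is $O(\|\partial^2_{xy}\Phi\|_{C^1}\cdot c\cdot L^{-1/3}\cdot L^{-n-m-4/3})$, which by (\ref{first estimate on L}) is a small fraction of $c_N c^2 L^{-n-m-4/3}$; subtracting yields the left inequality in (\ref{RCS}) with margin enough to get $c_N/1000$.

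Finally, the right inequality in (\ref{RCS}) follows from the plain geometric MVT applied to the rectangle with vertices $(x_\alpha,y_\beta)$ together with $|x_a-x_{a'}|\leq 2r_X+\sqrt d\,L^{-n-1}\leq(2c+\sqrt d L^{-1/3})L^{-n-2/3}$, which with the chosen $c$ forces $L^{n+2/3}|x_a-x_{a'}|\leq 1/2$, establishing the even spacing (\ref{even spacing}) simultaneously. For the line-segment condition (\ref{line segment_1}), note that both $x_a,x_{a'}$ lie at $\ell^\infty$-distance at most $(c+\sqrt d L^{-1/3})L^{-n-2/3}\leq \frac{1}{10}L^{-n-2/3}$ from $x_0$; hence they lie in the $\ell^\infty$-cube of that radius around $x_0$, which by Proposition \ref{discret} and the convexity of cubes is contained in $I$, so the entire line segment is in $I$. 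The main obstacle I anticipate is balancing the constant $c$: it must be small enough that the perturbation drift and the upper bound in (\ref{RCS}) are both under control, yet not so small that the gain $c_N c^2$ fails to dominate the drift after applying (\ref{difference of MVTs}); tracking the constants through each inequality, using the specific lower bound (\ref{first estimate on L}) on $L$, is where the numerical care of the proof lives.
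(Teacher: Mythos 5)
Your proposal is correct and follows essentially the same route as the paper's proof: apply Proposition \ref{discret} to get interior points, invoke nonorthogonality at radius $\tfrac{1}{20}L^{-n-2/3}$ (resp.\ $\tfrac{1}{20}L^{-m-2/3}$), take the children containing the seed points, transfer the lower bound to arbitrary points via \eqref{difference of MVTs}, get the upper bound and even spacing from \eqref{geometric MVT}, and enclose $x_a, x_{a'}$ in an $\ell^\infty$-cube around the interior point for \eqref{line segment_1}. The only differences are cosmetic (your explicit check that $I_a \neq I_{a'}$ is already implied by the lower bound holding for arbitrary points, and your diameter bounds omit a second child-diameter term and the $(1+2L^{-2/3})^2$ factor, which the margins easily absorb).
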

\begin{proof}
By Proposition \ref{discret}, we may choose $\underline x \in X\cap I$ and $\underline y \in Y\cap J$ such that
$$\min\left(L^{n + 2/3} \dist(\underline x, \partial I), L^{m + 2/3} \dist(\underline y, \partial J)\right) \geq \frac{1}{10}.$$
Let $r_X=\frac{1}{20}L^{-n-2/3}$, $r_Y=\frac{1}{20}L^{-m-2/3}$.
One can show that if (\ref{first estimate on L}) holds, then $L \geq 20^3$ and 
\begin{equation}\label{annoying factors}
    (1 + 2L^{-2/3})^4 \leq \frac{9}{8}.
\end{equation}
Since $n \leq K_X - 1$ and $L \geq 20^3$,
$$r_X = \frac{1}{20} L^{-n-2/3} \geq \frac{1}{20} L^{-K_X} L^{1/3} \geq L^{-K_X},$$
and similarly $r_Y \geq L^{-K_Y}$.
So by nonorthogonality, there exist $\tilde x_a, \tilde x_{a'} \in X\cap B(\underline x,r_X)$ and $\tilde y_b, \tilde y_{b'} \in Y\cap B(\underline y,r_Y)$ such that for $\tilde \omega_{\alpha \beta} := \Phi(\tilde x_\alpha, \tilde y_\beta)$,
\begin{equation}\label{tilde nonorthogonality}
    |\tilde \omega_{ab} - \tilde \omega_{a'b} - \tilde \omega_{ab'} + \tilde \omega_{a'b'}| \geq c_N r_X r_Y.
\end{equation}
In the other direction, (\ref{geometric MVT}) and the triangle inequality gives
\begin{equation}\label{tilde CS}
    |\tilde \omega_{ab} - \tilde \omega_{a'b} - \tilde \omega_{ab'} + \tilde \omega_{a'b'}| \leq \|\partial^2_{xy} \Phi\|_{C^0} \cdot |\tilde x_a - \tilde x_{a'}| \cdot |\tilde y_b - \tilde y_{b'}|.
\end{equation}
Let $I_\alpha$ the the children of $I$ containing $\tilde x_\alpha$ and $J_\beta$ be the children of $J$ containing $y_\beta$. Pick arbitrary points $x_\alpha \in I_\alpha$ and $y_\beta \in J_\beta$. We first use (\ref{perturbed dist is perturbation}), (\ref{annoying factors}), and (\ref{first estimate on L}) to bound
\begin{align*}
    |x_a - x_{a'}| 
    &\leq 2r_X + \diam I_a + \diam I_{a'} \\
    &\leq \frac{1}{10} L^{-n-2/3} + 2 d^{1/2} L^{-n-1}(1 + 2L^{-2/3})^2 \\
    &\leq \frac{1}{10} L^{-n-2/3} + 5 d^{1/2} L^{-n - 1} \\
    &\leq \frac{1}{2} L^{-n-2/3}.
\end{align*}
A similar estimate holds on $|y_b - y_{b'}|$, which proves the upper bound in (\ref{even spacing}).

To prove (\ref{RCS}), let $c_x := 2r_X$, $c_y := 2r_Y$, $\varepsilon_x := \max(\diam I_a, \diam I_{a'})$, and $\varepsilon_y := \max(\diam J_b, \diam J_{b'})$.
Then by (\ref{geometric MVT}), (\ref{difference of MVTs}), (\ref{perturbed dist is perturbation}), (\ref{annoying factors}), and (\ref{first estimate on L}),
 \begin{align*}
    &|\omega_{ab} - \omega_{ab'} - \omega_{a'b} + \omega_{a'b'} - \tilde \omega_{ab} + \tilde \omega_{ab'} + \tilde \omega_{a'b} - \tilde \omega_{a'b'}| \\
    &\qquad \leq 7 \|\partial^2_{xy} \Phi\|_{C^1} (c_x \varepsilon_y + c_y \varepsilon_x) \\
    &\qquad \leq \frac{7}{5} \|\partial^2_{xy} \Phi\|_{C^1} (d^{1/2} L^{-n-m-5/3}(1 + 2L^{-2/3})^2) \\
    &\qquad \leq 2 \|\partial^2_{xy} \Phi\|_{C^1} d^{1/2} L^{n-m-5/3}.
\end{align*}
Combining this estimate with (\ref{tilde nonorthogonality}) and (\ref{first estimate on L}), we obtain
\begin{align*}
    &|\omega_{ab} - \omega_{ab'} - \omega_{a'b} + \omega_{a'b'}| \\
    &\qquad \geq |\tilde \omega_{ab} - \tilde \omega_{a'b} - \tilde \omega_{ab'} + \tilde \omega_{a'b'}| \\
    &\qquad \qquad - |\omega_{ab} - \omega_{ab'} - \omega_{a'b} + \omega_{a'b'} - \tilde \omega_{ab} + \tilde \omega_{ab'} + \tilde \omega_{a'b} - \tilde \omega_{a'b'}| \\
    &\qquad \geq \frac{c_N}{400} L^{-n-m-4/3} - 2 \|\partial^2_{xy} \Phi\|_{C^1} d^{1/2} L^{-n-m-5/3} \\
    &\qquad \geq \frac{c_N}{1000} L^{-n-m-4/3}
\end{align*}
which is the desired lower bound in (\ref{RCS}).
For the upper bound, since $\tilde{x}_a,\tilde{x}_{a'}\in B(\underline{x},r_X)$ and $\tilde{y}_b,\tilde{y}_{b'}\in B(\underline{y},r_Y)$ we use \eqref{geometric MVT}:
\begin{align*}
   |\omega_{ab} - \omega_{ab'} - \omega_{a'b} + \omega_{a'b'}| &\leq  4\|\partial^2_{xy} \Phi\|_{C^0} (r_X+\sqrt{d}L^{-n-1}) (r_Y+\sqrt{d}L^{-m-1})\\
    &\leq \frac{\|\partial^2_{xy} \Phi\|_{C^0}}{20} L^{-n-m-4/3}.
\end{align*}

Finally we prove (\ref{line segment_1}). We use (\ref{perturbed dist is perturbation}), (\ref{annoying factors}), (\ref{first estimate on L}), and the fact that $\dist(a, b) \leq |a - b|$ to estimate 
$$\dist(x_a, \underline x) \leq \dist(x_a, \tilde x_a) + \dist(\tilde x_a, \underline x) \leq 2L^{-n-1} + r_X \leq \frac{1}{15} L^{-n-2/3}.$$
The same bound holds for $x_{a'}$ and it follows that $x_a, x_{a'}$ are contained in the convex set $K := B_\infty(\underline x, L^{-n-2/3}/15)$.
In particular, $\ell := \overline{x_a x_{a'}}$ satisfies $\ell \subset K$.
This implies $\ell \subset I$, since
$$\dist(\partial K, \partial I) \geq \dist(\underline x, \partial I) - \frac{L^{-n-2/3}}{15} \geq \frac{L^{-n-2/3}}{30}$$
so that $K \subseteq I$.
\end{proof}

We now give a probabilistic interpretation of the above lemmata. 
To establish notation, suppose that $I \in V_n(X)$ for some compact set $X$ and some $n$.
We write $\{I_a: a \in A\}$ for the set of children of $I$.
This induces the structure of a probability space on $A$: namely,
$$\Pr(a) := \frac{\mu_X(I_a)}{\mu_X(I)}.$$

\begin{proposition}\label{random discretization}
Let $\Phi \in C^3(\RR^d \times \RR^d)$, and suppose that $L$ satisfies (\ref{first estimate on L}). Let $(X, \mu_X)$ be doubling with constant $C_D(X)$ on scales $[ L^{-K_X},1]$, let $(Y, \mu_Y)$ be doubling with constant $C_D(Y)$ on scales $[ L^{-K_Y},1]$, let $V(X), V(Y)$ be their perturbed standard discretizations, and assume that $(X, Y)$ is $\Phi$-nonorthogonal with constant $c_N$ from scales $(L^{-K_X}, L^{-K_Y})$ to $1$, $n < K_X$, $m < K_Y$, $I \in V_n(X)$, and $J \in V_m(Y)$, and $\{I_a: a \in A\}$ and $\{J_b: b \in B\}$ the sets of children of $I, J$. Furthermore, choose for each $a \in A$ and $b \in B$, $x_a \in I_a$ and $y_b \in J_b$, and set $\omega_{ab} := \Phi(x_a, y_b)$.
 
Draw independent random outcomes $a, a' \in A$ and $b, b' \in B$.
Then with probability
\begin{equation}\label{probability estimate}
    \rho \geq C_D(X)^{-2\lceil \log_2(20L^{5/3}) \rceil} C_D(Y)^{-2\lceil \log_2(20L^{5/3}) \rceil},
\end{equation}
we have the reverse Cauchy--Schwarz inequality
\begin{equation}\label{Reverse Cauchy Schwarz}
    \frac{c_N}{1000} L^{-1/3} \leq L^{n + m + 1} |\omega_{ab} - \omega_{a'b} - \omega_{ab'} + \omega_{a'b'}|\leq \pi
\end{equation}
and the even spacing condition
\begin{equation}\label{comparable lengths}
  L^{n + 2/3} |x_a - x_{a'}|, L^{m + 2/3} |y_b - y_{b'}| \leq \frac{1}{2}.
\end{equation}
Moreover, we may assume
\begin{equation}\label{eq:mean value condition}
    \text{ for any }\, 
    x_a\in I_a, x_{a'}\in I_{a'},\,\text{ the line segment } \, \overline{x_a x_{a'}}\, \text{ always lies in }\, I.
\end{equation}
\end{proposition}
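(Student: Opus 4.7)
The plan is to apply Lemma \ref{DJ213} to pin down a specific quadruple $(I_{a_0}, I_{a'_0}, J_{b_0}, J_{b'_0})$ of children of $I$ and $J$ that deterministically satisfies a reverse Cauchy--Schwarz inequality, an even spacing condition, and the line segment condition, and then to lower-bound the probability that the random draw $(a, a', b, b')$ lands on this specific quadruple via iterated applications of the doubling property.

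First, I will verify that the deterministic bounds from Lemma \ref{DJ213} imply the three required conclusions. That lemma gives
\[\frac{c_N}{1000} \leq L^{n+m+4/3}|\omega_{a_0 b_0} - \omega_{a'_0 b_0} - \omega_{a_0 b'_0} + \omega_{a'_0 b'_0}| \leq \frac{\|\partial^2_{xy}\Phi\|_{C^0}}{20}\]
for every choice of representatives. Dividing by $L^{1/3}$ immediately yields the lower bound in \eqref{Reverse Cauchy Schwarz}; the upper bound follows since $\|\partial^2_{xy}\Phi\|_{C^0}/(20 L^{1/3}) \leq \pi$, a consequence of the lower bound \eqref{first estimate on L} on $L$. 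The conditions \eqref{comparable lengths} and \eqref{eq:mean value condition} transfer verbatim from \eqref{even spacing} and \eqref{line segment_1}.

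By independence, the probability estimate reduces to
\[\rho \geq \Pr(a = a_0)\,\Pr(a' = a'_0)\,\Pr(b = b_0)\,\Pr(b' = b'_0),\]
so it suffices to bound each factor below by $C_D(X)^{-\lceil \log_2(20 L^{5/3}) \rceil}$ or $C_D(Y)^{-\lceil \log_2(20 L^{5/3}) \rceil}$ as appropriate. For $\Pr(a = a_0) = \mu_X(I_{a_0})/\mu_X(I)$, I would invoke Proposition \ref{discret} to choose $x_0 \in X \cap I_{a_0}$ with $\dist(x_0, \partial I_{a_0}) \geq \frac{1}{10} L^{-(n+1)-2/3}$, so that the cube $Q_0$ centered at $x_0$ of side length $\frac{1}{5} L^{-(n+1)-2/3}$ lies inside $I_{a_0}$. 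On the other hand, using \eqref{perturbed dist is perturbation} together with $x_0 \in I$, the tile $I$ is contained in a cube $\widetilde Q$ also centered at $x_0$ of side length at most $2L^{-n}(1+L^{-2/3})$. The ratio of side lengths is at most $20 L^{5/3}$, so after $k := \lceil \log_2(20 L^{5/3}) \rceil$ doublings starting from $Q_0$ we obtain $2^k Q_0 \supset \widetilde Q$, and therefore
\[\mu_X(I) \leq \mu_X(\widetilde Q) \leq \mu_X(2^k Q_0) \leq C_D(X)^k \mu_X(Q_0) \leq C_D(X)^k \mu_X(I_{a_0}),\]
giving the desired factor.

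The main technical point is ensuring that doubling applies at each intermediate scale, i.e., that the side lengths $2^j \cdot \frac{1}{5} L^{-(n+1)-2/3}$ for $j = 0, \ldots, k-1$ all lie in $[L^{-K_X}, 1/2]$; this is a direct bookkeeping check using $n < K_X$ and the lower bound on $L$. The remaining three factors are treated identically (with $\mu_X, C_D(X)$ replaced by $\mu_Y, C_D(Y)$ as appropriate), and multiplying the four bounds together produces \eqref{probability estimate}.
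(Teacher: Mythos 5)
Your proposal is correct and follows essentially the same route as the paper: both fix the deterministic quadruple of children supplied by Lemma \ref{DJ213} (whose conclusions hold for all representatives, hence for the pre-chosen $x_a, y_b$), deduce the upper bound $\pi$ in \eqref{Reverse Cauchy Schwarz} from \eqref{RCS} together with \eqref{first estimate on L}, and lower-bound the probability of drawing that quadruple by comparing, via $\lceil \log_2(20L^{5/3})\rceil$ applications of doubling, a small cube inside each child (centered at the point guaranteed by \eqref{good cube condition}) with a cube of comparable side $\approx 4L^{-n}$ containing the parent tile. The only differences are cosmetic bookkeeping of the cube sizes, which matches the paper's computation.
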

\begin{proof}

By Lemma \ref{DJ213}, there exist $a,b,a', b'$ satisfying \eqref{RCS} and \eqref{even spacing}.
By definition of the perturbed standard discretization, there exists $x_*\in I_a \cap X$ with $I_*:=\frac{1}{10}B_\infty(x_*,L^{-n-5/3})\subset I_a$. Moreover, $I\subset B_\infty(x_0, 2L^{-n})=I_*(20L^{5/3})$. Therefore,

$$\Pr(a) = \frac{\mu_X(I_a)}{\mu_X(I)} \geq \frac{\mu_X(I_*)}{\mu_X(I_*(20L^{5/3}))} \geq C_D(X)^{-\lceil \log_2(20L^{5/3}) \rceil}.$$

We have analogous lower bound on $\Pr(b),\Pr(a'),\Pr(b')$.
Then by independence,
$$\rho \geq \Pr(a) \Pr(a') \Pr(b) \Pr(b'),$$
which gives (\ref{probability estimate}), and (\ref{RCS}) and (\ref{even spacing}) clearly imply (\ref{comparable lengths}) and the lower bound on (\ref{Reverse Cauchy Schwarz}). The condition \eqref{eq:mean value condition} comes from \eqref{line segment_1}.
For the upper bound we apply (\ref{RCS}) and (\ref{first estimate on L}).
\end{proof}

\section{The induction on scales}\label{induction on scale}
We now begin the proof of Theorem \ref{main theorem}.
Let $\Phi \in C^3(\RR^d \times \RR^d)$ and $p \in C^1(\RR^d \times \RR^d)$ be the phase and symbol of $\mathcal B_h$, and let $K := \lfloor -\log_L h \rfloor$.

Let $(X, \mu_X)$ and $(Y, \mu_Y)$ be doubling with constants $C_D(X), C_D(Y)$ on scales $\geq h$, let $V(X), V(Y)$ be their perturbed standard discretizations, and assume that $(X, Y)$ is $\Phi$-nonorthogonal with constant $c_N$ from scales $(h, h)$ to $1$.

For $I \in V_n(X)$ and $J \in V_m(Y)$, where $n + m + 1 = K$, we set 
$$F_J(x) = \frac{1}{\mu_Y(J)} \int_J \exp\left(i\frac{\Phi(x, y) - \Phi(x, y_J)}{h}\right) p(x, y) f(y) \dif \mu_Y(y).$$
Here $y_J$ is the center of $J^0$, the box in the standard discretization associated to $J$.
Let $\{I_a: a \in A\}$ and $\{J_b: b \in B\}$ be sets of children with their usual probability measures.
Let $x_a := \argmax_{I_a} |F_J|$ and $y_b := y_{J_b}$.

\subsection{Mean value space}
We need to generalize the space $C_\theta(I)$ where $d = 1$ (see \cite[\S2.2]{Dyatlov_2018} and also \cite[Lemma 5.4]{Naud2005}), which is supposed to locally measure oscillation on $I$ whilst also being ``scale-invariant."\footnote{We cannot use the space $C^1(I)$ with its norm $\|f\|_{C^1(I)} := \|f\|_{C^0(I)} + \|\nabla f\|_{C^0(I)}$, because the first and second terms in the norm will scale differently if we rescale $I$.} This will allow us to get some gain out of the cancellation obtained from nonorthogonality while performing induction on scales.

\begin{definition}
Given $I \in V_n(X)$ and $\theta \in (0, 1)$, we define the $C_\theta(I)$ norm for functions $f \in C^1(I)$ by
$$\|f\|_{C_\theta(I)} := \max\left(\|f\|_{C^0(I)}, \theta \diam(I) \|\nabla f\|_{C^0(I)}\right).$$
\end{definition}

Given $J \in V_m(Y)$, we set $\Psi_b: I \to \mathbf{R}$ as
$$\Psi_b(x) := \frac{\Phi(x, y_{J_b}) - \Phi(x, y_J)}{h}.$$

\begin{lemma}\label{theta assumptions}
Let $\theta \leq \frac{1}{8 \max(1, \|\partial^2_{xy} \Phi\|_{C^0(I_{conv})})}$ (where $I_{conv}$ is the convex hull of $I$) and $L \geq 10$. Then for $f \in C_\theta(I)$,
\begin{equation}\label{assumption on theta}
\|e^{i\Psi_b} f\|_{C_\theta(I_a)} \leq \|f\|_{C_\theta(I)},
\end{equation}
\end{lemma}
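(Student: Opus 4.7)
The plan is to unpack the definition of the $C_\theta$-norm and control each of its two ingredients separately. Since $|e^{i\Psi_b}|\equiv 1$ and $I_a\subseteq I$, the supremum piece of $\|e^{i\Psi_b}f\|_{C_\theta(I_a)}$ is immediate: $\|e^{i\Psi_b}f\|_{C^0(I_a)}\leq \|f\|_{C^0(I)}\leq \|f\|_{C_\theta(I)}$.

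For the rescaled-gradient piece, I would apply the product rule to obtain $|\nabla(e^{i\Psi_b}f)|\leq |f|\,|\nabla\Psi_b|+|\nabla f|$ pointwise. After multiplying by $\theta\diam(I_a)$ and taking the sup over $I_a$, the quantity to control splits into $T_1+T_2$ with
\begin{equation*}
T_1:=\theta\diam(I_a)\,\|\nabla\Psi_b\|_{C^0(I_a)}\,\|f\|_{C^0(I)}, \qquad T_2:=\theta\diam(I_a)\,\|\nabla f\|_{C^0(I)}.
\end{equation*}
The second is handled by factoring out $\diam(I)$ to recover the $C_\theta(I)$-norm of $f$, picking up a ratio $\diam(I_a)/\diam(I)$; by the perturbation bound \eqref{perturbed dist is perturbation} of Proposition~\ref{discret}, this ratio is at most $(1+L^{-2/3})/(L(1-L^{-2/3}))$, which for $L\geq 10$ is small.

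The heart of the proof is estimating $\nabla\Psi_b$. Applying the fundamental theorem of calculus in $y$ along the segment from $y_J$ to $y_{J_b}$ yields
\begin{equation*}
|\nabla_x\Psi_b(x)|\leq h^{-1}\|\partial^2_{xy}\Phi\|_{C^0(I_{\mathrm{conv}})}\,|y_{J_b}-y_J|.
\end{equation*}
To control $|y_{J_b}-y_J|$, I would use that $y_{J_b}$, being the center of $J_b^0$, lies in $J^0(1+L^{-2/3})$ via the inclusion chain $J_b^0(1-L^{-2/3})\subset J_b\subset J\subset J^0(1+L^{-2/3})$ from \eqref{perturbed dist is perturbation}, and that $y_J$ is the center of $J^0$; this gives $|y_{J_b}-y_J|\leq \tfrac12\sqrt{d}\,L^{-m}(1+L^{-2/3})$. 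Combining with $\diam(I_a)\leq \sqrt{d}\,L^{-n-1}(1+L^{-2/3})$, the identity $n+m+1=K$, and the bound $L^{-K}/h\leq L$ coming from $K=\lfloor-\log_L h\rfloor$, one arrives at $\theta\diam(I_a)\,\|\nabla\Psi_b\|_{C^0(I_a)}\leq \tfrac{1}{16}dL(1+L^{-2/3})^2$ after applying the hypothesis $\theta\leq 1/(8\max(1,\|\partial^2_{xy}\Phi\|_{C^0}))$.

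The main difficulty is bookkeeping: the factor $L^{-K}/h$ can be as large as $L$, giving $T_1$ an unwanted factor of $L$ that must be absorbed by the $\tfrac18$ coming from the hypothesis on $\theta$, while $T_2$ supplies its own small factor $\diam(I_a)/\diam(I)\lesssim 1/L$. For $L\geq 10$ the perturbation factors $(1\pm L^{-2/3})$ are controllably close to $1$, and one then verifies $T_1+T_2\leq \|f\|_{C_\theta(I)}$. No step is conceptually subtle; the entire argument is a careful product-rule plus mean-value-theorem calculation, but tracking the perturbation constants so that the final inequality closes is where the care lies.
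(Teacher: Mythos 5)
Your strategy is the same as the paper's: the $C^0$ component is trivial since $|e^{i\Psi_b}|=1$ and $I_a\subseteq I$; the gradient component is split by the product rule into a phase term and a $\nabla f$ term; $|\nabla\Psi_b|$ is bounded by $h^{-1}\|\partial^2_{xy}\Phi\|_{C^0}\,|y_{J_b}-y_J|$; and the $\nabla f$ term is handled by the ratio $\diam(I_a)/\diam(I)\lesssim 1/L$. The problem is the closing step for the phase term, and as written your argument does not close. Your own displayed estimate is $\theta\,\diam(I_a)\,\|\nabla\Psi_b\|_{C^0(I_a)}\leq \tfrac{1}{16}dL(1+L^{-2/3})^2$, so $T_1\leq \tfrac{1}{16}dL(1+L^{-2/3})^2\,\|f\|_{C_\theta(I)}$. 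This is not small: it is already above $1$ for $dL\gtrsim 11$ (and even at $d=1$, $L=10$ one gets $T_1\approx 0.92\,\|f\|_{C_\theta(I)}$, so $T_1+T_2>\|f\|_{C_\theta(I)}$), while in the intended application $L$ is enormous, cf.\ \eqref{formula for L}. The factor of $L$ you introduce via $L^{-K}/h\leq L$ cannot be ``absorbed by the $\tfrac18$'': the hypothesis on $\theta$ is independent of $L$ and $d$, and the inequality \eqref{assumption on theta} must hold with constant exactly $1$ because it is iterated $K\sim \log_L(1/h)$ times inside Proposition \ref{statistical iteration}; any multiplicative loss destroys the induction. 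So the final sentence ``one then verifies $T_1+T_2\leq\|f\|_{C_\theta(I)}$'' is a genuine gap, not bookkeeping.

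For comparison, the paper closes this term by bounding $|y_{J_b}-y_J|\leq\diam(J)$ and then asserting $\diam(I_a)\diam(J)/h\leq(1+L^{-2/3})^2$: the point is the depth relation $n+m+1=K$, so the product of the child scale $L^{-n-1}$ and the parent scale $L^{-m}$ is $L^{-K}$, which is identified with $h$ up to the perturbation factors from \eqref{perturbed dist is perturbation}; no factor of $L$ ever enters, and the hypothesis on $\theta$ then gives the phase term $\leq\tfrac14\|f\|_{C_\theta(I)}$ and the $\nabla f$ term $\leq\tfrac14\|f\|_{C_\theta(I)}$, so the gradient component is at most $\|f\|_{C_\theta(I)}$ with room to spare. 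Your more explicit bookkeeping does expose that the constants here are tight (the paper elides the gap between $h$ and $L^{-K}$, where $h$ may be as small as $L^{-K-1}$, as well as the $\sqrt d$ coming from Euclidean diameters), but the repair must consist of not generating the factor $L$ (equivalently, of treating $\diam(I_a)\,|y_{J_b}-y_J|$ as comparable to $h$, or of weakening the admissible $\theta$ to account for the residual $d$ and $L^{-K}/h$ factors), not of hoping the $\tfrac18$ swallows it.
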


\begin{proof}
Observe that if $\psi$ is a smooth function on $I_{a,conv}$, then any $f \in C_\theta(I_a)$ satisfies
$$|\nabla (e^{i\psi} f)| = |ie^{i\psi} f \nabla \psi + e^{i\psi} \nabla f| \le |f \nabla \psi| + |\nabla f|.$$
Hence
$$\theta \diam(I_a) |\nabla (e^{i\Psi_b} f)(x)| \le \theta \diam(I_a) \|\nabla \Psi_b\|_{C^0(I_a)} \|f\|_{C^0(I_a)} + \theta \diam(I_a) \|\nabla f\|_{C^0(I_a)}.$$
We estimate that
\begin{align*}
|\nabla \Psi_b(x)| &= \frac{1}{h}|\partial_x(\Phi(x, y_{J_b}) - \Phi(x, y_J))| \le \frac{1}{h} |y_{J_b} - y_J|\|\partial_{xy}^2 \Phi\|_{C^0(I_{conv})} \\
&\leq \frac{\diam(J)}{h} \|\partial^2_{xy} \Phi\|_{C^0(I_{conv})}.
\end{align*}
So by hypothesis on $\theta$ and $L$,
\begin{align*}
    \theta \diam(I_a) \|\nabla \Psi_b\|_{C^0(I_a)} \|f\|_{C^0(I_a)} 
    &\leq \theta\frac{\diam(I_a) \diam(J)}{h} \|\partial^2_{xy} \Phi\|_{C^0(I_{conv})} \|f\|_{C^0(I)} \\
    &\leq \theta (1 + L^{-2/3})^2 \|\partial^2_{xy} \Phi\|_{C^0(I_{conv})} \|f\|_{C_\theta(I)} \\
    &\leq \frac{\|f\|_{C_\theta(I)}}{4}.
\end{align*}
In addition, by hypothesis on $L$,
$$\theta \diam(I_a) \|\nabla f\|_{C^0(I_a)} \le \frac{2}{L} \theta \diam(I) \|\nabla f\|_{C^0(I)} \leq \frac{\|f\|_{C_\theta(I)}}{4}.$$
Summing up,
$$\theta \diam(I_a) \|\nabla(e^{i\Psi_b} f)\|_{C^0(I_a)} \leq \|f\|_{C_\theta(I)}.$$
We also trivially have
$$\|f\|_{C^0(I_a)} \leq \|f\|_{C^0(I)} \leq \|f\|_{C_\theta(I)},$$
which proves (\ref{assumption on theta}).
\end{proof}

\subsection{Inductive step}
Our next task is to prove the following analogue of \cite[Lemma 3.2]{Dyatlov_2018}.

\begin{proposition}\label{statistical iteration}
Let $I \in V_n(X)$, $J \in V_m(Y)$, where $n + m + 1 = K$.
Draw a random $b \in B$, and assume that (\ref{Reverse Cauchy Schwarz}) and (\ref{comparable lengths}) hold with probability $\rho$.
Assume that
\begin{align}
    L &\geq \max\left(\frac{10^{12} d^3}{c_N^3 \theta^{3/2}}, \frac{{10^{10} \|\partial^2_{xy} \Phi\|_{C^1}^3} d^{3/2}}{c_N^3}\right), \label{formula for L}\\
    \varepsilon_1 &\leq \frac{\rho^2 c_N^2}{10^9  d^2 L^{2/3}}. \label{formula for epsilon1}
\end{align}
Then we have the improvement
\begin{equation}\label{improved volume bound}
\Expect_{a \in A} \|F_J\|_{C_\theta(I_a)}^2 \leq (1 - \varepsilon_1) \Expect_{b \in B} \|F_{J_b}\|_{C_\theta(I)}^2.
\end{equation}
\end{proposition}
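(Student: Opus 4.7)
The plan is to apply the variance identity (\ref{variance in a product space}) in the $b$ variable, reducing (\ref{improved volume bound}) to a coercive lower bound on a second moment, and then to extract that lower bound from the reverse Cauchy--Schwarz estimate (\ref{Reverse Cauchy Schwarz}).

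For each $a \in A$, I pick a linear functional $\ell_a$ on $(C_\theta(I_a), \|\cdot\|_{C_\theta(I_a)})$ of operator norm at most $1$ that realizes the norm: either $\ell_a(u) = u(x_a)$ with $x_a \in \argmax_{I_a}|F_J|$, or $\ell_a(u) = \theta\diam(I_a)\partial_{v_a}u(y_a)$ with $y_a \in \argmax_{I_a}|\nabla F_J|$ and $v_a$ a unit direction realizing the maximum. Lemma \ref{theta assumptions} ensures that $D_b(a) := \ell_a(e^{i\Psi_b}F_{J_b})$ satisfies $|D_b(a)| \leq \|F_{J_b}\|_{C_\theta(I)}$, and by construction $\Expect_b D_b(a) = \ell_a(F_J)$. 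Applying (\ref{variance in a product space}) in $b$ and then taking $\Expect_a$,
\begin{equation*}
\Expect_a\|F_J\|_{C_\theta(I_a)}^2 \leq \Expect_b\|F_{J_b}\|_{C_\theta(I)}^2 - \tfrac{1}{2}\Expect_{a, b, b'}|D_b(a) - D_{b'}(a)|^2,
\end{equation*}
so (\ref{improved volume bound}) follows once I prove the coercive lower bound $\Expect_{a, b, b'}|D_b(a) - D_{b'}(a)|^2 \geq 2\varepsilon_1\Expect_b\|F_{J_b}\|_{C_\theta(I)}^2$.

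To establish this, I argue by contradiction. Were the variance smaller than the desired bound, Markov's inequality would force $|D_b(a) - D_{b'}(a)| \leq \lambda$ with high probability over $(a, b, b')$, for some small threshold $\lambda$. Introducing an independent copy $a'$ of $a$, the same bound would hold for $(a', b, b')$ off a set of small probability. Meanwhile, on the ``good event'' from Proposition \ref{random discretization} (probability at least $\rho$), the approximate equations $e^{i\Psi_b(z_a)}F_{J_b}(z_a) = e^{i\Psi_{b'}(z_a)}F_{J_{b'}}(z_a) + O(\lambda)$ and its $(z_{a'})$-analog hold simultaneously (in the sup case, $z_a := x_a$). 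Using the mean-value estimate $|F_{J_b}(z_a) - F_{J_b}(z_{a'})| \leq \|\nabla F_{J_b}\|_{C^0(I)}|z_a - z_{a'}| \leq L^{-2/3}/(2\theta)\cdot\|F_{J_b}\|_{C_\theta(I)}$, justified by the segment condition (\ref{eq:mean value condition}) together with (\ref{comparable lengths}), to transport $F_{J_b}(z_a)$ to $F_{J_b}(z_{a'})$ and then combining the two equations yields
\[
|F_{J_{b'}}(z_{a'})|\cdot\left|e^{i(\Psi_{b'}(z_a) - \Psi_b(z_a))} - e^{i(\Psi_{b'}(z_{a'}) - \Psi_b(z_{a'}))}\right| \lesssim \lambda + \frac{L^{-2/3}}{\theta}\bigl(\|F_{J_b}\|_{C_\theta(I)} + \|F_{J_{b'}}\|_{C_\theta(I)}\bigr).
\]
The reverse Cauchy--Schwarz inequality (\ref{Reverse Cauchy Schwarz}) then bounds the exponential difference on the left below by $c_N L^{-1/3}/(500\pi)$, forcing $|F_{J_{b'}}(z_{a'})|$ to be small; squaring and averaging over the good event provides the desired contradiction.

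The gradient case is handled by the same scheme, where the factor $\theta\diam(I_a)$ in $\ell_a$ introduces a $1/\theta$ loss that is compensated by the largeness of $L$ in (\ref{formula for L}). Tracking the constants -- the $c_N^2 L^{-2/3}$ square of the phase-difference gap from (\ref{Reverse Cauchy Schwarz}), the probability $\rho$ of the good event, and a dimensional factor $d^2$ from the gradient case -- produces the form of $\varepsilon_1$ in (\ref{formula for epsilon1}). The main obstacle is the contradiction step: converting smallness of the pairwise differences $|D_b(a) - D_{b'}(a)|$ into a lower bound on $|F_{J_{b'}}|$ via nonorthogonality, which requires careful separation of the fast scale $h^{-1}$ on which $\Psi_b$ oscillates from the slow scale $\theta\diam(I)$ on which $F_{J_b}$ is controlled; the hypothesis (\ref{formula for L}) on $L$ is exactly the compatibility condition $L^{-2/3}/\theta \ll c_N L^{-1/3}$ needed to ensure this separation.
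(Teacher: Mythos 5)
Your reduction via the variance identity is sound and is essentially the paper's own starting point: your inequality $\Expect_a\|F_J\|_{C_\theta(I_a)}^2 \leq \Expect_b\|F_{J_b}\|_{C_\theta(I)}^2 - \frac{1}{2}\Expect_{a,b,b'}|D_b(a)-D_{b'}(a)|^2$ is a repackaging of (\ref{Expectation bound})--(\ref{Variance bound}), and your displayed estimate bounding $|F_{J_{b'}}(z_{a'})|$ times the phase difference is the same phase-cancellation computation that gives the paper's conditional upper bound on $\Expect(|F_{ab}|^2\mid S)$. But the contradiction step has a genuine gap. Your argument ends with ``forcing $|F_{J_{b'}}(z_{a'})|$ to be small; squaring and averaging over the good event provides the desired contradiction,'' yet nothing in your proposal shows that $|F_{J_{b'}}(z_{a'})|$ must be \emph{large} with substantial probability on the good event. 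The point $z_{a'}$ maximizes $|F_J|=|\Expect_b e^{i\Psi_b}F_{J_b}|$ (or is a gradient-norming point), not $|F_{J_{b'}}|$; a priori $F_{J_{b'}}$ can be tiny there while $\|F_{J_{b'}}\|_{C_\theta(I)}$ is carried by its gradient or by values elsewhere in $I$, so smallness of $|F_{J_{b'}}(z_{a'})|$ on $S$ contradicts nothing. Establishing the missing conditional lower bound $\Expect(|F_{J_{b'}}(x_{a'})|^2\mid S)\gtrsim R$ is where the paper spends half of its proof: one needs (i) the inequality $\|F_J\|_{C_\theta(I_a)}^2\leq\frac{1}{2}(\|F_J\|_{C^0(I_a)}^2+R)$, which follows from Lemma \ref{theta assumptions} and shows that under the contradiction hypothesis the sup part of the norm dominates, so $|F_J(x_a)|^2$ is typically close to $R$; (ii) the unconditional moment estimates (\ref{lower bound on expected argmax})--(\ref{lower bound on EF}) transferring this to $|F_{ab}|$; and (iii) Cantelli's inequality to turn these into high-probability statements with failure probability $O(\sqrt{\varepsilon_1})\ll\rho$, so that they survive conditioning on the event $S$ of probability only $\rho$. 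None of this appears in your proposal, and Markov alone on the variance does not supply it.

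A second, related problem is the gradient branch of your norming-functional dichotomy. If $\ell_a(u)=\theta\diam(I_a)\,\partial_{v_a}u(y_a)$, then $D_b(a)-D_{b'}(a)$ involves $\partial_{v_a}(e^{i\Psi_b}F_{J_b})-\partial_{v_a}(e^{i\Psi_{b'}}F_{J_{b'}})$, which mixes $\nabla F_{J_b}$ with the large phase gradients $F_{J_b}\nabla\Psi_b$; it does not factor as (phase difference)$\times F$, and the reverse Cauchy--Schwarz inequality (\ref{Reverse Cauchy Schwarz}) is a lower bound on second differences of $\Phi$-\emph{values}, so it gives no oscillation lower bound for these derivative functionals. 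Saying the $1/\theta$ loss ``is compensated by the largeness of $L$'' does not address this structural obstruction. The paper's trick (i) above exists precisely to avoid ever running the cancellation argument on the gradient part. As written, your scheme closes neither in the gradient case nor, because of the missing lower bound, in the sup case.
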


\subsubsection{The contradiction assumption}
We set up the proof of Proposition \ref{statistical iteration} by first recording 
\begin{equation}\label{twisting formula}
    F_J = \Expect_{b \in B} e^{i\Psi_b} F_{J_b}.
\end{equation}
We have the following lemma which is nearly identical to \cite[Lemma 3.3]{Dyatlov_2018}.

\begin{lemma}
For each $a \in A$,
\begin{equation}\label{Expectation bound}
\|F_J\|_{C_\theta(I_a)}^2 \leq \left(\Expect_{b \in B} \|F_{J_b}\|_{C_\theta(I)}\right)^2 \leq \Expect_{b \in B} \|F_{J_b}\|_{C_\theta(I)}^2.
\end{equation}
\end{lemma}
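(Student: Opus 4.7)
The plan is to derive both inequalities in a single short argument built on the twisting identity \eqref{twisting formula} and the pointwise-in-$b$ norm bound \eqref{assumption on theta} from Lemma \ref{theta assumptions}.

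First I would verify that Lemma \ref{theta assumptions} applies to each $F_{J_b}$: the hypothesis (\ref{formula for L}) in Proposition \ref{statistical iteration} forces $L \geq 10$, and the bound on $\theta$ required in Lemma \ref{theta assumptions} (namely $\theta \leq 1/(8\max(1,\|\partial^2_{xy}\Phi\|_{C^0}))$) is automatically assumed in the induction-on-scale setup. Granting this, for each fixed $a \in A$ and each $b\in B$ one has
\[
\|e^{i\Psi_b} F_{J_b}\|_{C_\theta(I_a)} \leq \|F_{J_b}\|_{C_\theta(I)}.
\]

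Next, I would apply the triangle inequality (which clearly holds for the $C_\theta$ norm, since it is a maximum of two seminorms, each a supremum) to the averaging formula \eqref{twisting formula}. This yields
\[
\|F_J\|_{C_\theta(I_a)} = \Bigl\|\Expect_{b\in B} e^{i\Psi_b} F_{J_b}\Bigr\|_{C_\theta(I_a)} \leq \Expect_{b\in B}\|e^{i\Psi_b} F_{J_b}\|_{C_\theta(I_a)} \leq \Expect_{b\in B}\|F_{J_b}\|_{C_\theta(I)},
\]
where the last step uses the pointwise bound above. Squaring both sides gives the first inequality in \eqref{Expectation bound}.

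The second inequality is then just Jensen's inequality applied to the convex function $t\mapsto t^2$ on $[0,\infty)$ (equivalently, Cauchy--Schwarz in the probability space $B$), which gives $(\Expect_{b\in B} X_b)^2 \leq \Expect_{b\in B} X_b^2$ for the nonnegative random variable $X_b := \|F_{J_b}\|_{C_\theta(I)}$. I do not expect any genuine obstacle here; the only delicate point is checking that the hypotheses of Lemma \ref{theta assumptions} are satisfied, which is a routine consequence of (\ref{formula for L}) and the standing choice of $\theta$.
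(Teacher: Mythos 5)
Your proof is correct and follows essentially the same route as the paper: the pointwise bound \eqref{assumption on theta} from Lemma \ref{theta assumptions}, the averaging identity \eqref{twisting formula} with the triangle inequality for the $C_\theta$ norm, and Cauchy--Schwarz (Jensen) for the second inequality. The only difference is that you spell out the verification of the hypotheses of Lemma \ref{theta assumptions} and the triangle-inequality step, which the paper leaves implicit.
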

\begin{proof}
By (\ref{assumption on theta}), 
$$\|e^{i\Psi_b} F_{J_b}\|_{C_\theta(I_a)} \leq \|F_{J_b}\|_{C_\theta(I)}.$$
The assertions of (\ref{Expectation bound}) now follow from (\ref{twisting formula}) and the Cauchy--Schwarz inequality.
\end{proof}

We set $R := \Expect_{b \in B} \|F_{J_b}\|_{C_\theta(I)}^2$.
Draw $a \in A$ independently of $b$.
Taking expectations in (\ref{Expectation bound}), we obtain
\begin{equation}\label{Variance bound}
\sigma^2 := \Expect_{b \in B} \|F_{J_b}\|_{C_\theta(I)}^2 - \Expect_{a \in A} \|F_J\|_{C_\theta(I_a)}^2 \geq \Var_{b \in B} \|F_{J_b}\|_{C_\theta(I)}.
\end{equation}
In particular, (\ref{Variance bound}) can be written
$$\sigma^2 = R -\Expect_{a\in A} \|F_J\|_{C_\theta(I_a)}^2.$$
If we knew that $\sigma^2 \geq \varepsilon_1 R$, then the improvement (\ref{improved volume bound}) would follow.
So, we assume towards contradiction that
\begin{equation}\label{contradiction assumption}
\sigma^2 < \varepsilon_1 R.
\end{equation}

Let $F_{ab} := F_{J_b}(x_a)$, $\omega_{ab} := \Psi_b(x_a)$, and $f_{ab} := e^{i\omega_{ab}} F_{ab}$, so that
\begin{equation}\label{expectation fab}
F_J(x_a) = \Expect_{b \in B} f_{ab}
\end{equation}
and for each $a \in A$,
\begin{equation}\label{bounding Fab}
\Expect_{b \in B} |F_{ab}|^2 \leq \Expect_{b \in B} \|F_{J_b}\|_{C_\theta(I)}^2 = R.
\end{equation}

\subsubsection{Outline of the proof}
By our contradiction assumption (\ref{contradiction assumption}) and variance bound (\ref{Variance bound}), the $C_\theta(I)$ norms of the functions $F_{J_b}$ are all almost independent of $b$.
One can show that $f_{ab}$ is almost independent of $b$ (see \eqref{lower bound on expected argmax}). By the mean value theorem, $F_{ab}$ does not vary too much in $a$ (see \eqref{discrepancy upper bound}). 
However, the events (\ref{Reverse Cauchy Schwarz}) and (\ref{comparable lengths}) have positive probability, so we may condition on them without losing too much, and after conditioning, the phases of $f_{ab}$ and $f_{a'b'}$ cannot be too correlated by (\ref{Reverse Cauchy Schwarz}) and (\ref{comparable lengths}).
So we expect cancellation between $f_{ab}$ and $f_{a'b'}$ whenever $a,a',b,b'$ are drawn at random, by the square-root cancellation heuristic.
This cancellation implies that the conditional expectation of $|F_{ab}|^2$ is both very small and comparable to $R$, a contradiction.

\subsubsection{Two unconditional moment estimates}
We now make two unconditional moment estimates; we shall later use Cantelli's inequality to show that weaker versions of the same moment estimates hold even when we condition on the events (\ref{Reverse Cauchy Schwarz}) and (\ref{comparable lengths}).

\begin{lemma}\label{variance and iids claim}
One has
\begin{align}
\Expect_{a \in A} \Var_{b \in B} f_{ab} &\leq \Expect_{\substack{a \in A \\ b \in B}} |F_{ab}|^2 -R+2\sigma^2\leq 2\sigma^2, \label{lower bound on expected argmax} \\
\Expect_{\substack{a \in A \\ b \in B}} |F_{ab}| &\geq (1 - 2\varepsilon_1) \sqrt R. \label{lower bound on EF}
\end{align}
\end{lemma}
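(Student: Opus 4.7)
The plan is to treat the two estimates separately, reducing each to a statement about the expected argmax $\Expect_a|F_J(x_a)|^2$. For \eqref{lower bound on expected argmax}, I would begin by expanding the variance via $\Var_b f_{ab} = \Expect_b |f_{ab}|^2 - |\Expect_b f_{ab}|^2$, noting that $|f_{ab}| = |F_{ab}|$ since $|e^{i\omega_{ab}}|=1$, and that $\Expect_b f_{ab} = F_J(x_a)$ by \eqref{expectation fab}. Taking $\Expect_a$ gives the clean decomposition
\[ \Expect_{a\in A}\Var_{b\in B} f_{ab} = \Expect_{a,b}|F_{ab}|^2 - \Expect_a |F_J(x_a)|^2. \]
The second inequality $\leq 2\sigma^2$ in \eqref{lower bound on expected argmax} then reduces to $\Expect_{a,b}|F_{ab}|^2 \leq R$, which follows from the pointwise bound $|F_{ab}| = |F_{J_b}(x_a)| \leq \|F_{J_b}\|_{C^0(I)} \leq \|F_{J_b}\|_{C_\theta(I)}$ after averaging in $a,b$; this is essentially \eqref{bounding Fab}.

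The first inequality is equivalent to the lower bound $\Expect_a |F_J(x_a)|^2 \geq R - 2\sigma^2$. Since $x_a$ is the argmax of $|F_J|$ on $I_a$, we have $|F_J(x_a)| = \|F_J\|_{C^0(I_a)}$, while by definition $\Expect_a \|F_J\|_{C_\theta(I_a)}^2 = R - \sigma^2$; thus what remains is the averaged discrepancy estimate
\[ \Expect_a \bigl(\|F_J\|_{C_\theta(I_a)}^2 - \|F_J\|_{C^0(I_a)}^2\bigr) \leq \sigma^2. \]
To prove this I would use the elementary identity $\max(X,Y)^2 - X^2 = (Y^2-X^2)_+$ with $X = \|F_J\|_{C^0(I_a)}$ and $Y = \theta\diam(I_a)\|\nabla F_J\|_{C^0(I_a)}$, bound the gradient term through the representation $F_J = \Expect_b e^{i\Psi_b} F_{J_b}$ together with Lemma \ref{theta assumptions}, and then tie the resulting expression to the variance-type quantity defining $\sigma^2$. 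This discrepancy estimate, comparing the $C^0$-norm (attained at $x_a$) to the $C_\theta$-norm on average, is the main technical obstacle: the gradient contribution in the $C_\theta$-norm must be seen to be absorbed into $\sigma^2$, using that the bounds on $\|F_{J_b}\|_{C_\theta(I)}$ have small variance in $b$.

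For \eqref{lower bound on EF}, once \eqref{lower bound on expected argmax} is in hand the nonnegativity of the variance gives $\Expect_{a,b}|F_{ab}|^2 \geq \Expect_a|F_J(x_a)|^2 \geq R - 2\sigma^2$. The pointwise bound $|F_J(x_a)| \leq \|F_J\|_{C_\theta(I_a)} \leq \Expect_b \|F_{J_b}\|_{C_\theta(I)} \leq \sqrt{R}$ (the last step by Jensen applied to (\ref{Expectation bound})) yields $|F_J(x_a)|^2 \leq \sqrt{R}\,|F_J(x_a)|$, and averaging in $a$ gives $\Expect_a|F_J(x_a)| \geq (R - 2\sigma^2)/\sqrt{R}$. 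Finally, $\Expect_{a,b}|F_{ab}| = \Expect_{a,b}|f_{ab}| \geq \Expect_a|\Expect_b f_{ab}| = \Expect_a|F_J(x_a)|$ by Jensen in $b$ and \eqref{expectation fab}, and the contradiction assumption \eqref{contradiction assumption} $\sigma^2 < \varepsilon_1 R$ lets us conclude $\Expect_{a,b}|F_{ab}| \geq \sqrt{R}(1 - 2\varepsilon_1)$.
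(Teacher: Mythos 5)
Your overall architecture coincides with the paper's: both arguments come down to the lower bound $\Expect_{a\in A}\|F_J\|_{C^0(I_a)}^2\ge R-2\sigma^2$, and your derivation of \eqref{lower bound on EF} from that bound is correct (indeed slightly cleaner than the paper's, which passes through $\Expect_a\sqrt{\Expect_b|f_{ab}|^2-\Var_b f_{ab}}$ and divides by $\sqrt{\max_a\Expect_b|f_{ab}|^2}$). The variance expansion, the use of \eqref{bounding Fab} for the second inequality in \eqref{lower bound on expected argmax}, and the reduction to the discrepancy estimate $\Expect_a D_a\le\sigma^2$, where $D_a:=\|F_J\|_{C_\theta(I_a)}^2-\|F_J\|_{C^0(I_a)}^2$, are all fine.

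The gap is in the one step you defer, and the mechanism you gesture at for it is off: no ``small variance in $b$'' of $\|F_{J_b}\|_{C_\theta(I)}$ is needed or used. What makes the discrepancy absorbable into $\sigma^2$ is a strict contraction on the gradient part of the $C_\theta$ norm, which comes from the \emph{proof} of Lemma \ref{theta assumptions} (its stated conclusion is not enough): $\theta\diam(I_a)\|\nabla(e^{i\Psi_b}F_{J_b})\|_{C^0(I_a)}\le\tfrac12\|F_{J_b}\|_{C_\theta(I)}$, hence by \eqref{twisting formula} and the triangle inequality $Y_a:=\theta\diam(I_a)\|\nabla F_J\|_{C^0(I_a)}\le\tfrac12\Expect_b\|F_{J_b}\|_{C_\theta(I)}\le\tfrac{\sqrt R}{2}$. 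If you only use the lemma as stated, you get $Y_a\le\sqrt R$, and then with $X_a:=\|F_J\|_{C^0(I_a)}$ your identity gives only $D_a=(Y_a^2-X_a^2)_+\le R-X_a^2$, whose expectation is exactly $\sigma^2+\Expect_a D_a$; the bootstrap returns $0\le\sigma^2$ and proves nothing. With the factor $\tfrac12$ one has $(Y_a^2-X_a^2)_+\le\tfrac14(R-X_a^2)$, and averaging together with $\Expect_a\|F_J\|_{C_\theta(I_a)}^2=R-\sigma^2$ yields $\Expect_a D_a\le\tfrac14(\sigma^2+\Expect_a D_a)$, i.e.\ $\Expect_a D_a\le\sigma^2/3$, more than enough. (The paper runs the same mechanism pointwise: $\|F_J\|_{C_\theta(I_a)}^2\le\tfrac12(\|F_J\|_{C^0(I_a)}^2+R)$, then takes $\Expect_a$ and uses \eqref{Variance bound}.) So your plan closes only once you supply the half-factor gradient bound; as written, that key quantitative input is missing.
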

\begin{proof}
We follow \cite[Lemma 3.5]{Dyatlov_2018}.
By Lemma \ref{theta assumptions}, for each $a, b$,
$$\theta \|\nabla (e^{i\Psi_b}F_{J_b})\|_{C^0(I_a)} \diam I_a \leq \frac{\|F_{J_b}\|_{C_\theta(I)}}{2}.$$
From the definition of $C_\theta(I_a)$, (\ref{twisting formula}), and the triangle inequality, for each $a \in A$,
\begin{align*}
    \|F_J\|_{C_\theta(I_a)} &= \max\left(\|F_J\|_{C^0(I_a)}, \theta \|\nabla F_J\|_{C^0(I_a)} \diam I_a\right) \\
    &\leq \max\left(\|F_J\|_{C^0(I_a)}, \theta \diam I_a \Expect_{b \in B} \|\nabla(e^{i\Psi_b} F_{J_b})\|_{C^0(I_a)}\right) \\
    &\leq \max\left(\|F_J\|_{C^0(I_a)}, \frac{1}{2} \Expect_{b \in B} \|F_{J_b}\|_{C_\theta(I)}\right).
\end{align*}
We estimate the squares of the two terms in the maximum using (\ref{Expectation bound}):
$$\|F_J\|_{C^0(I_a)}^2 \leq \frac{1}{2} (\|F_J\|_{C^0(I_a)}^2 + \|F_J\|_{C_\theta(I_a)}^2) \leq \frac{1}{2}(\|F_J\|_{C^0(I_a)}^2 + R),$$
and
$$\left(\frac{1}{2} \Expect_{b \in B} \|F_{J_b}\|_{C_\theta(I)}\right)^2 \leq \frac{1}{4} \Expect_{b \in B} \|F_{J_b}\|_{C_\theta(I)}^2 \leq \frac{R}{4} \leq \frac{1}{2}(\|F_J\|_{C^0(I_a)}^2 + R).$$
In summary, we have 
\begin{equation}\label{reverse bound on theta}
\|F_J\|_{C_\theta(I_a)}^2 \leq \frac{1}{2}(\|F_J\|_{C^0(I_a)}^2 + R).
\end{equation}
After taking expectations and applying (\ref{Variance bound}), we get 
\begin{equation}\label{R minus 2sigma}
\Expect_{a \in A} \|F_J\|_{C^0(I_a)}^2 \geq 2 \Expect_{a \in A} \|F_J\|_{C_\theta(I_a)}^2 - R = R - 2\sigma^2.
\end{equation}
We also record that, by (\ref{twisting formula}), (\ref{expectation fab}), and the fact that $x_a$ maximizes $|F_J|$,
\begin{equation}\label{a|b}
    \Expect_{a \in A} \left|\Expect_{b \in B} f_{ab}\right|
    = \Expect_{a \in A} \left|\Expect_{b \in B} e^{i\Psi_b(x_a)} F_{J_b}(x_a)\right|
    = \Expect_{a \in A} |F_J(x_a)|
    = \Expect_{a \in A} \|F_J\|_{C^0(I_a)}.
\end{equation}
Combining this fact with (\ref{R minus 2sigma}),
$$\Expect_{a \in A} \left|\Expect_{b \in B} f_{ab}\right|^2 \geq R - 2\sigma^2.$$
Therefore
\begin{align}\label{we get a R - 2sigma 1}
    \Expect_{\substack{a \in A \\ b \in B}} |F_{ab}|^2 &= \Expect_{\substack{a \in A \\ b \in B}} |f_{ab}|^2 = \Expect_{a \in A} \left(\left|\Expect_{b \in B} f_{ab}\right|^2 + \Var_{b \in B} f_{ab}\right) \\
    &\geq R - 2\sigma^2 + \Expect_{a \in A} \Var_{b \in B} f_{ab}. \label{we get a R - 2sigma 2}
\end{align}
Rearranging, we obtain
$$\Expect_{a \in A} \Var_{b \in B} f_{ab} \leq \Expect_{\substack{a \in A \\ b \in B}} |F_{ab}|^2 - R+2\sigma^2.$$
Then (\ref{lower bound on expected argmax}) follows from (\ref{bounding Fab}).

To obtain (\ref{lower bound on EF}), we first estimate 
\begin{align*}
    \Expect_{\substack{a \in A \\ b \in B}} |F_{ab}|=\Expect_{\substack{a \in A \\ b \in B}} |f_{ab}|\geq\Expect_{a\in A}\left|\Expect_{b\in B}f_{ab}\right|=\Expect_{a\in A}\sqrt{\Expect_{b\in B}|f_{ab}|^2-\Var_{b\in B}f_{ab}}.
\end{align*}
From \eqref{we get a R - 2sigma 1}-\eqref{we get a R - 2sigma 2} and (\ref{bounding Fab}), and the contradiction assumption (\ref{contradiction assumption}), 
\begin{align*}
    \Expect_{a\in A}\sqrt{\Expect_{b\in B}|f_{ab}|^2-\Var_{b\in B}f_{ab}} 
    &\geq \frac{1}{\sqrt{\max_{a \in A} \Expect_{b \in B} |f_{ab}|^2}} \Expect_{a \in A} \left(\Expect_{b \in B} |f_{ab}|^2 - \Var_{b \in B} f_{ab}\right) \\
    &\geq \Expect_{a\in A}\frac{\Expect_{b\in B}|f_{ab}|^2-\Var_{b\in B}f_{ab}}{\sqrt{R}}\geq \frac{R-2\sigma^2}{\sqrt{R}}\\
    &\geq (1-2\varepsilon_1)\sqrt{R}. \qedhere
\end{align*}
\end{proof}

\subsubsection{Drawing random nonorthogonal tiles}
By (\ref{Expectation bound}) and the Cauchy--Schwarz inequality,
\begin{equation}\label{Expectation bound 2}
\Expect_{b \in B} \|F_{J_b}\|_{C_\theta(I)} \leq \sqrt R.
\end{equation}
Let $T$ be the event that $\|F_{J_b}\|_{C_\theta(I)} \leq 2 \sqrt R$.
By the moment bounds (\ref{Expectation bound 2}) and (\ref{Variance bound}), the contradiction assumption (\ref{contradiction assumption}), and Cantelli's inequality (\ref{Cantelli}),
\begin{equation}\label{probability lower bound}
\Pr(T) > 1 - \varepsilon_1.
\end{equation}
We let $T'$ be the respective event for $b'$, where $a', b'$ are drawn independently from $a, b$.
From (\ref{lower bound on expected argmax}), (\ref{probability lower bound}), and (\ref{variance in a product space}), we obtain
\begin{align}
\Expect_{\substack{a \in A \\ b, b' \in B}} (|f_{ab} - f_{ab'}|^2|T \cap T')
&\leq \frac{1}{\Pr(T \cap T')} \Expect_{\substack{a \in A \\ b, b' \in B}} |f_{ab} - f_{ab'}|^2 \label{upper bound on oscillating discrepancy 1} \\
&\leq \frac{2}{\Pr(T)^2} \Expect_{a \in A} \Var_{b \in B} f_{ab} \leq 2.5 \cdot 2\sigma^2 = 5\sigma^2 \label{upper bound on oscillating discrepancy 2}.
\end{align}

If $T$ and \eqref{eq:mean value condition} hold, then by Lemma \ref{theta assumptions},
\begin{equation}\label{discrepancy upper bound}
|F_{ab} - F_{a'b}| \leq \frac{2\sqrt R}{\theta} L^{H(I)} |x_a - x_{a'}|
\end{equation}
Let $S$ be the intersection of $T$, $T'$, and the events \eqref{Reverse Cauchy Schwarz}, \eqref{comparable lengths} and \eqref{eq:mean value condition}.
By (\ref{formula for epsilon1}), $\varepsilon_1 \leq \rho/10$, so by (\ref{probability lower bound}),
\begin{equation}\label{probability lower bound 2}
\frac{\Pr(S)}{\Pr(T)^2} \geq \frac{\rho - 2(1 - \Pr(T))}{\Pr(T)^2} \geq \frac{\rho - 2\varepsilon_1}{(1 - \varepsilon_1)^2} \geq \frac{\rho}{2}.
\end{equation}
If $S$ holds, then by (\ref{discrepancy upper bound}) and (\ref{comparable lengths}),
\begin{equation}\label{discrepancy upper bound 2}
|F_{ab} - F_{a'b}| \leq \frac{\sqrt R}{L^{2/3} \theta}.
\end{equation}

\subsubsection{Conditional second moment bounds}
We now use (\ref{probability lower bound 2}) and (\ref{discrepancy upper bound 2}) to obtain lower and upper bounds on $\Expect(|F_{ab}|^2|S)$ which are not both tenable.

\begin{lemma}
For $M := 8000000$,
\begin{equation}\label{bound on Fab}
\Expect_{\substack{a \in A \\ b \in B}} (|F_{ab}|^2|S) \leq Md^2\left(\frac{R}{c_N^2 L^{2/3} \theta} + 2 \frac{L^{2/3} \sigma^2}{c_N^2 \rho}\right).
\end{equation}
\end{lemma}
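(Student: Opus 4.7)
The strategy is to derive a pointwise inequality relating $|F_{ab}|$ to the four-point combination $f_{ab}+f_{a'b'}-f_{a'b}-f_{ab'}$ and the spatial discrepancies $|F_{\alpha\beta}-F_{ab}|$, use the reverse Cauchy--Schwarz inequality \eqref{Reverse Cauchy Schwarz} on $S$ to extract phase cancellation, and then convert everything to a second-moment bound via conditional expectation against the event $S$ (of probability $\Pr(S)\gtrsim\rho$ by \eqref{probability lower bound 2}).

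First I would set up the algebraic identity. Adding and subtracting $F_{ab}$ inside each summand of $f_{ab}+f_{a'b'}-f_{a'b}-f_{ab'}$ produces
\[
f_{ab}+f_{a'b'}-f_{a'b}-f_{ab'} = F_{ab}\bigl(e^{i\omega_{ab}}+e^{i\omega_{a'b'}}-e^{i\omega_{a'b}}-e^{i\omega_{ab'}}\bigr)+\mathrm{disc},
\]
where $\mathrm{disc}$ is a signed sum of $e^{i\omega_{\alpha\beta}}(F_{\alpha\beta}-F_{ab})$ for $(\alpha,\beta)\in\{(a',b),(a,b'),(a',b')\}$. On $S$, the two discrepancies $|F_{a'b}-F_{ab}|$ and $|F_{a'b'}-F_{ab'}|$ are $\leq\sqrt R/(L^{2/3}\theta)$ by \eqref{discrepancy upper bound 2}, and all $|F_{\alpha\beta}|\leq 2\sqrt R$ by definition of $T\cap T'$; the remaining discrepancy involving only $b$ vs.\ $b'$ changes must be controlled in mean square rather than pointwise.

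The main obstacle is that the factor $\Lambda := e^{i\omega_{ab}}+e^{i\omega_{a'b'}}-e^{i\omega_{a'b}}-e^{i\omega_{ab'}}$ can vanish at a point even though $|e^{i\Omega}-1|\gtrsim c_N L^{-1/3}$ on $S$. To circumvent this, I would rewrite
\[
f_{ab}-f_{a'b}=e^{i\omega_{a'b}}\bigl((e^{i(\omega_{ab}-\omega_{a'b})}-1)F_{ab}+(F_{ab}-F_{a'b})\bigr),
\]
apply the elementary inequality $|z+w|^2\geq\tfrac{1}{2}|z|^2-|w|^2$, and use the identity $(\omega_{ab}-\omega_{a'b})-(\omega_{ab'}-\omega_{a'b'})=\Omega$ combined with the triangle inequality to conclude that at least one of $|e^{i(\omega_{ab}-\omega_{a'b})}-1|$, $|e^{i(\omega_{ab'}-\omega_{a'b'})}-1|$ is $\geq|e^{i\Omega}-1|/2\geq c_N/(2000 L^{1/3})$. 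Thus
\[
|F_{ab}|^2+|F_{ab'}|^2\lesssim \frac{L^{2/3}}{c_N^2}\Bigl(|f_{ab}-f_{a'b}|^2+|f_{ab'}-f_{a'b'}|^2+|F_{ab}-F_{a'b}|^2+|F_{ab'}-F_{a'b'}|^2\Bigr);
\]
the symmetry of the joint law of $(a,a',b,b')$ conditional on $S$ then lets me identify $\Expect(|F_{ab}|^2\mid S)$ with the analogous average of $|F_{\alpha\beta}|^2$ over all four indices, so the same bound holds for $\Expect(|F_{ab}|^2\mid S)$.

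Finally, I would take conditional expectations. The variance contribution uses that $(f_{ab}-f_{a'b})-(f_{ab'}-f_{a'b'})=f_{ab}+f_{a'b'}-f_{a'b}-f_{ab'}$ is a sum of two $b$-differences, so combining \eqref{lower bound on expected argmax}, \eqref{variance in a product space} and \eqref{probability lower bound 2} yields $\Expect\bigl(|f_{ab}-f_{a'b}|^2+|f_{ab'}-f_{a'b'}|^2\,\big|\,S\bigr)\lesssim\sigma^2/\rho$, producing the $L^{2/3}\sigma^2/(c_N^2\rho)$ term. For the spatial term, combining the uniform bound $|F_{\alpha\beta}-F_{ab}|\leq 4\sqrt R$ with the $S$-pointwise bound $|F_{\alpha\beta}-F_{ab}|\leq d^{1/2}\sqrt{R}/(L^{2/3}\theta)$ from \eqref{discrepancy upper bound 2} and multiplying the two gives $|F_{\alpha\beta}-F_{ab}|^2\lesssim d R/(L^{2/3}\theta)$ on $S$, which after division by $|e^{i\Omega}-1|^2\gtrsim c_N^2/L^{2/3}$ yields the $d^2R/(c_N^2 L^{2/3}\theta)$ term (absorbing one extra power of $d$ into the $d^2$ prefactor from the Euclidean-to-coordinate conversions hidden in $\diam I$). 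Tracking the universal constants carefully will produce the stated constant $M=8\cdot 10^6$.
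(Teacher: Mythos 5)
Your reduction has a genuine gap at the last step. After the $u/v$ dichotomy (with $u=\omega_{ab}-\omega_{a'b}$, $v=\omega_{ab'}-\omega_{a'b'}$), the quantities you need to control in conditional expectation are the \emph{$a$-differences of $f$}, namely $|f_{ab}-f_{a'b}|^2$ and $|f_{ab'}-f_{a'b'}|^2$, and you claim $\Expect\bigl(|f_{ab}-f_{a'b}|^2+|f_{ab'}-f_{a'b'}|^2\,\big|\,S\bigr)\lesssim\sigma^2/\rho$. But the only smallness available, \eqref{lower bound on expected argmax}, controls the variance of $f_{ab}$ \emph{in $b$ at fixed $a$}, i.e.\ $b$-differences $|f_{ab}-f_{ab'}|$; it says nothing about differences in $a$. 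Indeed $f_{ab}-f_{a'b}=e^{i\omega_{a'b}}\bigl((e^{iu}-1)F_{ab}+(F_{ab}-F_{a'b})\bigr)$, and the phase increment $u=\Psi_b(x_a)-\Psi_b(x_{a'})$ is of size up to $\sim L^{1/3}\gg 2\pi$, while in the very "near-equality" scenario being ruled out one has $f_{ab}\approx F_J(x_a)$, so $\Expect|f_{ab}-f_{a'b}|^2\approx\Expect|F_J(x_a)-F_J(x_{a'})|^2$, which is generically of order $R$, not $O(\sigma^2)$. Your justification — that $(f_{ab}-f_{a'b})-(f_{ab'}-f_{a'b'})$ equals the four-point combination, which is a sum of two $b$-differences — only bounds the \emph{difference} of the two quantities, never each one individually, so the claimed estimate does not follow and is in fact false. (A secondary, fixable issue: your dichotomy only yields the pointwise bound for whichever of $|F_{ab}|^2,|F_{ab'}|^2$ corresponds to the large factor, i.e.\ for the minimum; symmetry of the law under $b\leftrightarrow b'$ does not upgrade $\Expect(\min|S)$ to $\Expect(|F_{ab}|^2|S)$ without adding a correction such as $\bigl||F_{ab}|-|F_{ab'}|\bigr|\le|f_{ab}-f_{ab'}|$.)

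The paper avoids your "$\Lambda$ can vanish" obstacle differently, and this is the key idea you are missing: with $\tau=\omega_{ab}-\omega_{ab'}-\omega_{a'b}+\omega_{a'b'}$ one writes $|(e^{i\tau}-1)F_{ab}|=\bigl|e^{i(\omega_{ab}-\omega_{ab'})}F_{ab}-e^{i(\omega_{a'b}-\omega_{a'b'})}F_{ab}\bigr|$ and then adds and subtracts terms so that the right-hand side is a signed combination of the two \emph{$b$-differences} $f_{ab}-f_{ab'}$, $f_{a'b}-f_{a'b'}$ and the two \emph{$a$-differences of $F$}, $F_{ab}-F_{a'b}$, $F_{ab'}-F_{a'b'}$. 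Each class is then controlled by exactly the tools you already cite: the $b$-differences via \eqref{lower bound on expected argmax}, \eqref{variance in a product space} and the conditioning bound \eqref{probability lower bound 2} (giving the $L^{2/3}\sigma^2/(c_N^2\rho)$ term), and the $a$-differences of $F$ pointwise on $S$ via the Lipschitz/mean-value bound \eqref{discrepancy upper bound 2} (giving the $R/(c_N^2L^{2/3}\theta)$ term), with the phase lower bound $|e^{i\tau}-1|^2\gtrsim c_N^2L^{-2/3}$ from \eqref{Reverse Cauchy Schwarz} supplying the $L^{2/3}/c_N^2$ prefactor. Your argument would go through if you replaced your $u/v$ splitting by this identity (or otherwise re-expressed the loss so that only $b$-differences of $f$ and $a$-differences of $F$ appear); as it stands, the step bounding the $a$-differences of $f$ by $\sigma^2/\rho$ is where the proof breaks.
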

\begin{proof}
We take all expectations and probabilities over $a, a', b, b'$.
Write
$$\tau := \omega_{ab} - \omega_{ab'} - \omega_{a'b} + \omega_{a'b'},$$
so if $S$ holds then
$$|e^{i\tau} - 1|^2 \geq |\tau|^2 \geq 10^{-6} c_N^{2} L^{-2/3}$$
by (\ref{Reverse Cauchy Schwarz}) and \cite[Lemma 2.6]{Dyatlov_2018}.
Following \cite[19]{Dyatlov_2018}, we rewrite 
\begin{align*}
|(e^{i\tau} - 1)F_{ab}|
&= |e^{i(\omega_{ab} - \omega_{ab'})} F_{ab} - e^{i(\omega_{a'b} - \omega_{a'b'})} F_{ab}| \\
&= |e^{-i\omega_{ab'}} (f_{ab} - f_{ab'}) + F_{ab'} - F_{a'b'} - e^{-i\omega_{a'b'}}(f_{a'b} - f_{a'b'}) + F_{ab} - F_{ab'}|.
\end{align*}
So by the triangle inequality in $L^2$,
$$\Expect(|(e^{i\tau} - 1)F_{ab}|^2|S) \leq 4 \Expect(|F_{ab} - F_{a'b}|^2 + |F_{a'b'} - F_{ab'}|^2 + |f_{ab} - f_{ab'}|^2 + |f_{a'b'} - f_{a'b}|^2|S).$$
So 
\begin{align*}
\Expect(|F_{ab}|^2|S) &\leq 10^6 \cdot \frac{d^2 L^{2/3}}{c_N^2} \Expect(|(e^{i\tau} - 1)F_{ab}|^2|S) \\
&\leq \frac{Md^2 L^{2/3}}{2c_N^2} \Expect(|F_{ab} - F_{a'b}|^2 + |F_{a'b'} - F_{ab'}|^2|S)\\
&\qquad + \frac{Md^2 L^{2/3}}{2c_N^2} \Expect(|f_{ab} - f_{ab'}|^2 + |f_{a'b'} - f_{a'b}|^2|S).
\end{align*}
Applying (\ref{discrepancy upper bound 2}),
$$|F_{ab} - F_{a'b}|^2 + |F_{a'b'} - F_{ab'}|^2 \leq \frac{2 R}{L^{4/3} \theta}.$$
Since $S$ implies $T \cap T'$, and $a,a'$ are independent,
$$\Expect(|f_{ab} - f_{ab'}|^2 + |f_{a'b'} - f_{a'b}|^2|S) \leq 2 \frac{\Pr(T)^2}{\Pr(S)} \Expect(|f_{ab} - f_{ab'}|^2|T \cap T').$$
By (\ref{probability lower bound 2}), $\Pr(T)^2/\Pr(S) \leq 2/\rho$.
Summing all this up and applying (\ref{upper bound on oscillating discrepancy 1}-\ref{upper bound on oscillating discrepancy 2}), we conclude (\ref{bound on Fab}).
\end{proof}

\begin{lemma}
One has
\begin{equation}\label{bound on Fab 2}
\Expect_{\substack{a \in A \\ b \in B}} (|F_{ab}|^2|S) \geq \frac{R}{6}.
\end{equation}
\end{lemma}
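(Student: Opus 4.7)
The plan is to derive the conditional lower bound from the unconditional $L^1$ lower bound \eqref{lower bound on EF} via a Cauchy--Schwarz argument, exploiting that the unconditional second moment $\Expect |F_{ab}|^2$ is bounded above by $R$ thanks to \eqref{bounding Fab}. Intuitively, since $|F_{ab}|^2$ has total mass at most $R$ and $|F_{ab}|$ has $L^1$-mass at least $(1 - 2\varepsilon_1)\sqrt R$, too much of this $L^1$-mass cannot be hiding on the complement $S^c$, so a definite fraction must persist on $S$.

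Concretely, I would first split
\[
(1 - 2\varepsilon_1)\sqrt R \;\leq\; \Expect |F_{ab}| \;=\; \Expect(|F_{ab}|\mathbf{1}_S) + \Expect(|F_{ab}|\mathbf{1}_{S^c})
\]
and apply Cauchy--Schwarz to each piece. On $S$, $\Expect(|F_{ab}|\mathbf{1}_S) \leq \sqrt{\Expect(|F_{ab}|^2 \mathbf{1}_S)\Pr(S)} = \Pr(S)\sqrt{\Expect(|F_{ab}|^2|S)}$. On $S^c$, $\Expect(|F_{ab}|\mathbf{1}_{S^c}) \leq \sqrt{\Expect(|F_{ab}|^2\mathbf{1}_{S^c})\Pr(S^c)} \leq \sqrt{R\, \Pr(S^c)}$, where the last bound uses $\Expect |F_{ab}|^2 \leq R$ (obtained by averaging \eqref{bounding Fab} over $a$). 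Rearranging yields
\[
\sqrt{\Expect(|F_{ab}|^2|S)} \;\geq\; \frac{\sqrt R}{\Pr(S)}\bigl(1 - 2\varepsilon_1 - \sqrt{\Pr(S^c)}\bigr).
\]

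To extract the desired quantitative bound, I would use the elementary inequality $\sqrt{1 - p} \leq 1 - p/2$ for $p \in [0,1]$ to obtain $1 - \sqrt{\Pr(S^c)} \geq \Pr(S)/2$, so that
\[
\sqrt{\Expect(|F_{ab}|^2|S)} \;\geq\; \sqrt R\left(\frac{1}{2} - \frac{2\varepsilon_1}{\Pr(S)}\right).
\]
By \eqref{probability lower bound 2} we have $\Pr(S) \geq \rho/2$, and the hypothesis \eqref{formula for epsilon1} gives $\varepsilon_1 \leq \rho^2 c_N^2/(10^9 d^2 L^{2/3})$, which is much smaller than $\rho/2$. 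Consequently the correction $2\varepsilon_1/\Pr(S) \leq 4\varepsilon_1/\rho$ is minuscule, and squaring gives $\Expect(|F_{ab}|^2|S) \geq R(\tfrac{1}{2} - o(1))^2 > R/6$.

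The main ``obstacle'' is purely bookkeeping: one must verify that the numerical hypothesis on $\varepsilon_1$ is tight enough that the loss $2\varepsilon_1/\Pr(S)$ leaves the prefactor strictly above $1/\sqrt 6$. No further conceptual ingredient is needed, which is notable because all the hard probabilistic work -- establishing the $L^1$ lower bound, controlling $\Pr(S)$ -- has already been accomplished in Lemma \ref{variance and iids claim} and in the passage leading to \eqref{probability lower bound 2}.
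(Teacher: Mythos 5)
Your argument is correct, and it takes a genuinely different route from the paper's. The paper proves \eqref{bound on Fab 2} by a concentration argument: it uses \eqref{lower bound on EF} to show via Markov that $\Expect_{b}|F_{ab}|$ rarely falls below $(1-2\sqrt{\varepsilon_1})\sqrt R$, then applies Cantelli's inequality \eqref{Cantelli} together with the variance bound \eqref{lower bound on expected argmax} and the contradiction assumption \eqref{contradiction assumption} to get $\Pr(|F_{ab}|^2 \leq R/5) < 2\sqrt{\varepsilon_1}$, and finally conditions on $S$ via \eqref{probability lower bound 2} and \eqref{formula for epsilon1} to obtain $\Pr(|F_{ab}|^2 \geq R/5 \mid S) \geq 1 - L^{-1/3}$, from which the expectation bound follows. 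You instead split the unconditional first moment over $S$ and $S^c$, apply Cauchy--Schwarz on each piece with the second-moment bound $\Expect_{a,b}|F_{ab}|^2 \leq R$ (averaging \eqref{bounding Fab} over $a$; note $|F_{ab}|$ does not depend on $a',b'$, so expectations over the product space agree), and use $\sqrt{1-p}\leq 1-p/2$. This is sound: one gets $\sqrt{\Expect(|F_{ab}|^2\mid S)} \geq \sqrt R\left(\tfrac12 - \tfrac{2\varepsilon_1}{\Pr(S)}\right)$, and since $\Pr(S)\geq \rho-2\varepsilon_1 \geq \rho/2$ while \eqref{formula for epsilon1} forces $\varepsilon_1/\rho \leq 10^{-9}$, the prefactor stays well above $1/\sqrt 6$. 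What each approach buys: yours is shorter, dispenses with Cantelli and with \eqref{lower bound on expected argmax} entirely, and needs only $\varepsilon_1 \ll \rho$ (linear in $\varepsilon_1$), whereas the paper's route requires the stronger condition $\sqrt{\varepsilon_1} \ll \rho L^{-1/3}$; on the other hand the paper's argument yields the pointwise conditional concentration statement $\Pr(|F_{ab}|^2\geq R/5\mid S)\geq 1-L^{-1/3}$, which is more information than the bare expectation bound, though only the latter is used afterwards. One small citation nit: \eqref{probability lower bound 2} literally gives $\Pr(S)\geq (\rho/2)\Pr(T)^2$; the cleanest justification of $\Pr(S)\geq\rho/2$ is the intermediate inequality $\Pr(S)\geq\rho-2(1-\Pr(T))\geq\rho-2\varepsilon_1$ in its derivation together with $\varepsilon_1\leq\rho/10$ --- harmless, but worth stating explicitly.
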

\begin{proof}
By \eqref{lower bound on EF}, we conclude that
\begin{align*}
    \Pr_{a\in A}\left(\Expect_{b\in 
 B}|F_{ab}|<(1-2\sqrt{\epsilon_1})\sqrt{R}\right)\leq \sqrt{\epsilon_1}.
\end{align*}
By Cantelli's inequality (\ref{Cantelli}),
$$\Pr_{b\in B}\left(|F_{ab}| \leq \Expect_{b\in B} |F_{ab}| - \frac{\sqrt R}{2}\right) \leq \frac{\Var_{b\in B} |F_{ab}|}{\Var_{b\in B} |F_{ab}| + R/4}.$$
Since $|F_{ab}| = |f_{ab}|$, it follows from (\ref{lower bound on expected argmax}) and (\ref{contradiction assumption}) that 
\begin{align*}
&\Pr\left(|F_{ab}|^2 \leq \frac{R}{5}\right)\\
&\leq\Pr_{a\in A}\left(\Expect_{b\in 
 B}|F_{ab}|<(1-2\sqrt{\epsilon_1})\sqrt{R}\right)+\Pr\left(\Expect_{b\in  B}|F_{ab}|\geq (1-2\sqrt{\epsilon_1})\sqrt{R},\, |F_{ab}|^2\leq \frac{R}{5}\right)\\ 
 &\leq \sqrt{\epsilon_1}+\Expect_{a\in A}\Pr_{b\in B}\left(|F_{ab}| \leq \Expect_{b\in B} |F_{ab}| - \frac{\sqrt R}{2}\right) \\
&\leq \sqrt{\epsilon_1}+\frac{4\Expect_{a\in A}\Var_{b\in B} f_{ab}}{R} \\
&\leq\sqrt{\epsilon_1}+ \frac{8\sigma^2}{R} < 2\sqrt{\epsilon_1}.
\end{align*}
But by (\ref{probability lower bound 2}),
$$\Pr\left(|F_{ab}|^2 \leq \frac{R}{5}\bigg|S\right) = \frac{\Pr((|F_{ab}|^2 \leq R/5) \cap S)}{\Pr(S)} \leq \frac{2\Pr(|F_{ab}|^2 \leq R/5)}{\rho}.$$
The definition (\ref{formula for epsilon1}) of $\varepsilon_1$ then implies 
$$\Pr\left(|F_{ab}|^2 \leq \frac{R}{5}\bigg|S\right) \leq \frac{4\sqrt{\varepsilon_1}}{\rho} < L^{-1/3}.$$
Therefore 
$$\Pr\left(|F_{ab}|^2 \geq \frac{R}{5}\bigg|S\right) \geq 1 - L^{-1/3},$$
so by Markov's inequality and the assumption (\ref{formula for L}),
\begin{align*}
    \Expect_{\substack{a \in A \\ b \in B}}(|F_{ab}|^2|S) &\geq \frac{R}{5} \Pr\left(|F_{ab}|^2 \geq \frac{R}{5}\bigg|S\right) \geq \frac{R}{6}. \qedhere
\end{align*}

\end{proof}

\subsubsection{Deriving a contradiction}
The two above conditional second moment bounds contradict (\ref{formula for L}, \ref{formula for epsilon1}), and the the contradiction assumption (\ref{contradiction assumption}).
To be more precise, combining (\ref{bound on Fab}) with (\ref{bound on Fab 2}) and (\ref{contradiction assumption}), we obtain 
$$\frac{R}{6} \leq \Expect_{\substack{a \in A \\ b \in B}} (|F_{ab}|^2|S) \leq Md^2\left(\frac{R}{c_N^2 L^{2/3} \theta} + \frac{2L^{2/3} \sigma^2}{c_N^2 \rho}\right) < Md^2\left(\frac{R}{c_N^2 L^{2/3} \theta} + \frac{2L^{2/3} \varepsilon_1 R}{c_N^2 \rho}\right).$$
Dividing both sides by $RM$ and applying (\ref{formula for L}, \ref{formula for epsilon1}), we obtain 
$$2 \cdot 10^{-8} < \frac{1}{48 \cdot 10^6} = \frac{1}{6M} \leq \frac{d^2}{c_N^2 L^{2/3}\theta} + \frac{2d^2 L^{2/3} \varepsilon_1}{c_N^2 \rho} \leq \frac{1}{10^{8}} + \frac{2}{10^9} = 1.2 \cdot 10^{-8}.$$
This is a contradiction that proves that $\sigma^2 \geq \varepsilon_1 R$, and so completes the proof of Proposition \ref{statistical iteration}.

\subsection{Proof of main theorem}
To prove Theorem \ref{main theorem} we iterate Proposition \ref{statistical iteration}.
For each $J$, we define
\begin{align*}
    E_J: V_{K - H(J)}(X) &\to \RR\\
    I &\mapsto \|F_J\|_{C_\theta(I)}.
\end{align*}
We endow $V_n(X)$ with the discrete measure induced by $\mu_X$, namely $\mu_X(\{I\}) = \mu_X(I)$, and $J$ with the restricted fractal measure $\mu_Y$. 

First suppose that $J \in V_K(Y)$. Then by the Cauchy--Schwarz inequality, it follows that
\begin{align*}
|\nabla F_J(x)| &= \frac{1}{\mu_Y(J)}\int_J i\partial_x \Psi_J(x, y) \exp(i(\Psi_J(x, y))) p(x, y) f(x, y) \\
&\qquad + \exp(i(\Psi_J(x, y))) \partial_x p(x, y) f(y) \dif \mu_Y(y) \\
&\le \frac{1}{\sqrt{\mu_Y(J)}}\left(\frac{\diam J}{h} \|\partial^2_{xy} \Phi\|_{C^0} \|f\|_{L^2(J)} \|p\|_{C^0} + \|\partial_x p\|_{C^0}\|f\|_{L^2(J)}\right)
\end{align*}
and
$$\|F_J\|_{C^0} \le \frac{\|p\|_{C^0}\|f\|_{L^2(J)}}{\sqrt{\mu_Y(J)}}.$$
Thus, 
\begin{equation}\label{base case}
E_J(I) = \|F_J\|_{C_\theta(I)} \leq \frac{\|p\|_{C^1} \|f\|_{L^2(J)}}{\sqrt{\mu_Y(J)}}.
\end{equation}
Taking $L^2$ norms of both sides of (\ref{base case}), we get 
\begin{equation}\label{integrated base case}
    \|E_J\|_{L^2}^2 \leq \frac{\|p\|_{C^1}^2 \mu_X(X)}{\mu_Y(J)} \|f\|_{L^2(J)}^2.
\end{equation}
If we take $L^2$ norms of both sides of (\ref{improved volume bound}), we get 
\begin{equation}\label{integrated inductive case}
\|E_J\|_{L^2}^2 \leq (1 - \varepsilon_1) \Expect_{b \in B} \|E_{J_b}\|_{L^2}^2.
\end{equation}
Inducting backwards on $H(J)$ with (\ref{integrated base case}) as base case and (\ref{integrated inductive case}) as inductive case, we conclude that if $J$ is a tile in $Y$ such that $H(J) = 0$, 
$$\|E_J\|_{L^2}^2 \leq \frac{\|p\|_{C^1}^2 \mu_X(X)}{\mu_Y(J)} (1 - \varepsilon_1)^K \|f\|_{L^2(J)}^2.$$
Summing both sides in $J$, we obtain
$$\|\mathcal B_h f\|_{L^2}^2 \lesssim \|p\|_{C^1}^2 \mu_X(X) \mu_Y(Y) (1 - \varepsilon_1)^K \|f\|_{L^2}^2.$$
We now can set 
$$\varepsilon_0 := \frac{\varepsilon_1}{6 \log L} \leq \frac{\log(1 - \varepsilon_1)^{-1}}{2 \log L}$$
and plug in $\theta$ in (\ref{formula for L}) to obtain (\ref{L}), (\ref{epsilon0}).
Then $(1 - \varepsilon_1)^{K/2} \leq h^{\varepsilon_0}$, so
$$\|\mathcal B_h\|_{L^2(\mu_Y) \to L^2(\mu_X)} \lesssim\|p\|_{C^1} \sqrt{\mu_X(X) \mu_Y(Y)} h^{\varepsilon_0}$$
which completes the proof of Theorem \ref{main theorem}.

\section{Applications}\label{applications}
\subsection{Classical fractal uncertainty principle}
We now prove Corollary \ref{FUP classic}, following \cite[Theorem 1, Remarks 1]{Dyatlov_2018}.

\begin{lemma}\label{regularity at scale h}
Let $(X, \mu)$ be $\delta$-regular on scales $[h, 1]$, $h > 0$, where $\delta \in [0, d]$, and $\mu$ is the $\delta$-dimensional Hausdorff measure.
Let $X_h := X + B_h$ and
$$\mu_h(A) := h^{\delta - d} |X \cap A|.$$
Then $(X_h, \mu_h)$ is $\delta$-regular on scales $[2h, 1]$ with constant
$$C_R(X_h) := 6^\delta |\Ball^d| C_R(X)^2.$$
\end{lemma}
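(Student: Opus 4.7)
The plan is to verify both halves of the definition of $\delta$-regularity for $\mu_h$ separately, transferring the $\delta$-regularity of $\mu$ through the $h$-thickening via standard packing and covering arguments. I will assume the stated formula $\mu_h(A) := h^{\delta-d}|X \cap A|$ is a typographical slip for $\mu_h(A) := h^{\delta-d}|X_h \cap A|$, since otherwise (with Lebesgue $|\cdot|$ and $\delta<d$) the measure vanishes identically, making the conclusion vacuous.

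\textbf{Upper bound.} For $B = B(x_0, r)$ with $r \in [2h, 1]$, every $z \in X_h \cap B$ lies within distance $h$ of some point of $X \cap B(x_0, r+h)$. Pick a maximal $h$-separated subset $\{y_i\}_{i=1}^N \subseteq X \cap B(x_0, r+h)$; maximality gives the cover $X \cap B(x_0, r+h) \subseteq \bigcup_i B(y_i, h)$, and therefore $X_h \cap B \subseteq \bigcup_i B(y_i, 2h)$, so $|X_h \cap B| \leq N |\mathbf{B}^d|(2h)^d$. To control $N$ I would use that $\mu(B(y_i, h)) \geq C_R^{-1} h^\delta$ by the lower regularity of $\mu$ (with $y_i \in X$ and $h$ in the regularity range), together with a bounded overlap estimate for the $B(y_i, h)$: the multiplicity is a dimensional constant $K_d$, since any point lies in at most as many balls as there are $h$-separated points in a ball of radius $h$. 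Summing then yields $N C_R^{-1} h^\delta \leq K_d \mu(B(x_0, r+2h)) \leq K_d C_R(r+2h)^\delta \leq K_d C_R(2r)^\delta$, where the last step uses $r \geq 2h$. Substituting back produces $\mu_h(B) \leq K_d \cdot 2^{d+\delta} |\mathbf{B}^d| C_R^2 r^\delta$.

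\textbf{Lower bound.} For $x_0 \in X_h$, choose $x \in X$ with $|x - x_0| \leq h$. Since $r \geq 2h$, the radius $r-h$ lies in the regularity range $[h,1]$, so one can select a maximal $2h$-separated subset $\{z_j\}_{j=1}^{N'} \subseteq X \cap B(x, r-h)$. The balls $B(z_j, h)$ are pairwise disjoint, contained in $X_h$, and by the triangle inequality lie in $B(x_0, r)$; thus $|X_h \cap B(x_0, r)| \geq N' |\mathbf{B}^d| h^d$. Maximality gives $X \cap B(x, r-h) \subseteq \bigcup_j B(z_j, 2h)$, which combined with lower regularity at $x \in X$ yields $C_R^{-1}(r-h)^\delta \leq N' C_R(2h)^\delta$. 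Using $r - h \geq r/2$ (valid because $r \geq 2h$) gives $N' \geq 4^{-\delta} C_R^{-2}(r/h)^\delta$ and therefore $\mu_h(B(x_0, r)) \geq 4^{-\delta}|\mathbf{B}^d| C_R^{-2} r^\delta$.

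\textbf{Main obstacle.} The structural argument is clean, but matching the precise constant $6^\delta |\mathbf{B}^d| C_R^2$ asserted in the lemma is delicate. The raw ball-based packing I described produces extraneous factors of $K_d$ and $2^d$ in the upper bound. The natural fix is to replace balls by axis-aligned cubes of side $h$ drawn from the lattice $h\mathbf{Z}^d$: these tile $\mathbf{R}^d$, so the overlap multiplicity collapses to $1$, and one applies the box half of the $\delta$-regularity hypothesis directly. With this replacement, together with the inequality $r+2h\leq 3r$ (which sharpens my $r+2h\leq 2r$ when combined with cleaner ambient-vs-central bounds), the upper estimate should collapse into the claimed $6^\delta |\mathbf{B}^d| C_R^2 r^\delta$, and a mirror adjustment to the lower estimate completes the proof.
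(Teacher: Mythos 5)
Your proof is in substance the same as the paper's: both arguments run through a maximal separated net of $X$ near the given ball, cover $X_h$ by $2h$-balls around the net points for the upper bound, pack disjoint small balls around net points for the lower bound, and control the net cardinality using the two-sided $\delta$-regularity of $\mu$. The only structural difference is that the paper quotes the net-counting bound from \cite[Lemma 7.4]{Dyatlov_2016}, whose proof packs disjoint half-radius balls and thus never needs your bounded-overlap constant $K_d$, whereas you prove the count inline. Your reading $\mu_h(A)=h^{\delta-d}|X_h\cap A|$ is indeed the intended one (with the literal $X\cap A$ the support of $\mu_h$ would generally be a proper subset of $X_h$, and for a genuinely fractal $X$ the measure would vanish), and your treatment of centers $x_0\in X_h$ via a nearby point $x\in X$ is in fact more careful than the paper's proof, which only considers centers in $X$.

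Two corrections. First, in your lower bound the claim that $B(z_j,h)\subseteq B(x_0,r)$ is false as written: with $z_j\in B(x,r-h)$ and $|x-x_0|\le h$ you only get $|z_j-x_0|\le r$, hence $B(z_j,h)\subseteq B(x_0,r+h)$. Take instead the $2h$-separated net inside $X\cap B(x,r-2h)$; for $r\ge 4h$ one has $r-2h\ge r/2\ge h$, so the regularity of $\mu$ still applies, and for $2h\le r\le 4h$ the single ball $B(x,h)\subseteq B(x_0,r)$ already gives $\mu_h(B(x_0,r))\ge |\mathbf{B}^d|\,h^\delta\ge 4^{-\delta}|\mathbf{B}^d|\,r^\delta$. (The paper's own packing step has the same off-by-$h/2$ sloppiness.) Second, do not worry about reproducing $6^\delta|\mathbf{B}^d|C_R(X)^2$ on the nose: the paper's bookkeeping is itself loose (it writes $h^{\delta-d}(2h)^d$ as $(2h)^\delta$ and drops a factor $|\mathbf{B}^d|2^{-d}$ in the lower bound), the top-scale overshoot $r+2h>1$ is glossed there as well, and all that is used downstream is a bound of the form $C_R(X_h)\le C(d,\delta)\,C_R(X)^2$. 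So your lattice-cube refinement is unnecessary; any explicit constant of that shape completes the argument.
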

\begin{proof}
Let $N = N_X(x, r, h)$ be the cardinality of a maximal $h$-separated set $X \cap B(x, r)$, for $x \in X$ and $r \geq 2h$.
By \cite[Lemma 7.4]{Dyatlov_2016}, we have 
$$C_R(X)^{-2} \frac{r^\delta}{h^\delta} \leq N_X(x, r, h) \leq C_R(X)^2 \left(1 + \frac{2r}{h}\right)^\delta.$$
If $\{x_1, \dots, x_N\}$ is such a maximal set, and $I_n := B(x_n, 2h)$, then $X \cap B(x, r) \subseteq \bigcup_{n=1}^N I_n$, so 
$$\mu_h(B(x, r)) \leq h^{\delta - d} \sum_{n=1}^N |I_n| \leq (2h)^\delta |\Ball^d| N \leq 2^\delta |\Ball^d| C_R(X)^2 (h + 2r)^\delta \leq C_R(X_h) r^\delta.$$
Conversely, if $J_n := B(x_n, h/2)$, then $J_n,J_m$ are disjoint, and $\bigcup_{n=1}^N J_n \subseteq X \cap B(x, r)$, so 
\begin{align*}
\mu_h(B(x, r)) &\geq \sum_{n=1}^N B(x_n, h/2) \geq N\frac{h^\delta}{2^\delta} \geq C_R(X)^{-2} 2^{-\delta} r^\delta \geq C_R(X_h)^{-1} r^\delta. \qedhere 
\end{align*}
\end{proof}

\begin{lemma}
    Let $(X, Y)$ be $\Phi$-nonorthogonal on scales $[h, 1]$, $h > 0$. Then $(X_h, Y_h)$ is $\Phi$-nonorthogonal on scales $[2h, 1]$ with constant $c_N(X_h, Y_h) := c_N(X, Y)/4$.
\end{lemma}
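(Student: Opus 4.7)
The plan is to prove this by a straightforward projection argument: every point in $X_h$ (resp.\ $Y_h$) lies within distance $h$ of a point of $X$ (resp.\ $Y$), so we can transfer the nonorthogonality witnesses from the original sets at a slightly smaller scale, and the scale loss by a factor of two accounts for the constant loss by a factor of four.

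Concretely, fix $x_0 \in X_h$, $y_0 \in Y_h$, and radii $r_X, r_Y \in [2h, 1]$. By definition of the $h$-neighborhoods there exist $\tilde x_0 \in X$ with $|\tilde x_0 - x_0| < h$ and $\tilde y_0 \in Y$ with $|\tilde y_0 - y_0| < h$. I then apply the nonorthogonality hypothesis for $(X, Y)$ at the center $(\tilde x_0, \tilde y_0)$ with the halved radii $r_X/2$ and $r_Y/2$; these lie in $[h, 1/2] \subseteq [h, 1]$ since $r_X, r_Y \in [2h, 1]$, so the hypothesis applies. It produces points $x_1, x_2 \in X \cap B(\tilde x_0, r_X/2)$ and $y_1, y_2 \in Y \cap B(\tilde y_0, r_Y/2)$ satisfying
$$|\Phi(x_1, y_1) - \Phi(x_2, y_1) - \Phi(x_1, y_2) + \Phi(x_2, y_2)| \geq c_N(X, Y) \cdot \tfrac{r_X}{2} \cdot \tfrac{r_Y}{2} = \tfrac{c_N(X, Y)}{4} r_X r_Y.$$

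It remains to verify that these same points serve as witnesses for $(X_h, Y_h)$ at the center $(x_0, y_0)$ with radii $(r_X, r_Y)$. The inclusion $X \subseteq X_h$ gives $x_1, x_2 \in X_h$, and the triangle inequality yields
$$|x_j - x_0| \leq |x_j - \tilde x_0| + |\tilde x_0 - x_0| < \tfrac{r_X}{2} + h \leq r_X,$$
where the last inequality uses $h \leq r_X/2$ since $r_X \geq 2h$. The symmetric estimate places $y_1, y_2$ inside $Y_h \cap B(y_0, r_Y)$, establishing the $\Phi$-nonorthogonality of $(X_h, Y_h)$ on scales $[2h, 1]$ with constant $c_N(X, Y)/4$.

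There is no serious obstacle here; the only care required is in the bookkeeping of the scale range, namely checking that halving $r_X, r_Y$ keeps us inside the scales $[h, 1]$ where the hypothesis for $(X, Y)$ is available, which is exactly why the new lower scale is $2h$ rather than $h$.
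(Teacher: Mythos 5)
Your proof is correct and follows essentially the same route as the paper's: transfer the center to a nearby point of $X$ (resp.\ $Y$), apply the nonorthogonality of $(X,Y)$ at a reduced radius, and check the witnesses land in the original ball. The only cosmetic difference is that the paper shrinks the radii to $r_X - h$ and $r_Y - h$ rather than $r_X/2$ and $r_Y/2$; both choices yield the same constant $c_N/4$ since $r_X, r_Y \geq 2h$.
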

\begin{proof}
    Let $x_0 \in X_h$, $y_0 \in Y_h$, and $r_X, r_Y \geq 2h$; then there exist $\tilde x_0 \in X$ and $\tilde y_0 \in Y$ with
$$\max(|x_0 - \tilde x_0|, |y_0 - \tilde y_0|) \leq h.$$
Putting $\tilde r_X := r_X - h$ and $\tilde r_Y := r_Y - h$, we can find by $\Phi$-nonorthogonality of $(X, Y)$ points
$$x_1, x_2 \in X \cap B(\tilde x_0, \tilde r_X) \subseteq X \cap B(x_0, r_X)$$
and 
$$y_1, y_2 \in Y \cap B(\tilde y_0, \tilde r_Y) \subseteq Y \cap B(y_0, r_Y)$$
such that 
\begin{align*}
    |\Phi(x_1, y_1) - \Phi(x_1, y_2) - \Phi(x_2, y_1) + \Phi(x_2, y_2)| &\geq c_N(X) \tilde r_X \tilde r_Y \geq c_N(X_h) r_X r_Y. \qedhere
\end{align*}
\end{proof}

\begin{proof}[Proof of Corollary \ref{FUP classic}]
We introduce the Fourier integral operator
$$\mathcal B_h f(\xi) := \int_Y e^{ix \cdot \xi/h} f(x) \dif \mu_{Y, h}(x).$$
By the above lemmata, $(X_h, \mu_{X, h})$ is $\delta$-regular, $(Y_h, \mu_{Y,h})$ is $\delta'$-regular, and $(X_h, Y_h)$ is $\Phi$-nonorthogonal.
Thus by Theorem \ref{main theorem},\footnote{The fact that regularity and nonorthogonality only hold up to scale $2h$ cause us to incur a loss of a power of $2$, but this is irrelevant.} there exists $\varepsilon_0 > 0$ such that
\begin{align*}
    \|1_{X_h} \mathscr F_h 1_{Y_h}\|_{L^2 \to L^2} &= \frac{h^{d/2 - \delta}}{(2\pi)^{\delta/2}} \|\mathcal B_h\|_{L^2(\mu_{Y, h}) \to L^2(\mu_{X, h})} \lesssim h^{d/2 - \delta + \varepsilon_0}. \qedhere
\end{align*}
\end{proof}



\subsection{Convex cocompact hyperbolic manifolds}\label{s:covcocom} In this section we prove Theorem \ref{scattering theory}.
First we recall some preliminaries for convex cocompact hyperbolic manifolds. 

Let $\mathbf{H}^{d+1}$ be the $d+1$ dimensional hyperbolic space (with constant curvature $-1$). The orientation preserving isometry group is given by $G=SO(d+1,1)_0$. Let $K=SO(d+1)$ be a maximal compact subgroup, so that $\mathbf{H}^{d+1}=G/K$. We are interested in infinite volume hyperbolic manifolds given by $M=\Gamma\backslash G/K$ where $\Gamma\subset G$ is a convex cocompact Zariski dense torsion-free discrete subgroup.

Let $\mathfrak{o}=[\id]$ be the reference point in $\mathbf{H}^{d+1}$. The \textit{limit set} is defined as $\Lambda(\Gamma)=\lim \Gamma\mathfrak{o}\subset\partial_\infty(\mathbf{H}^{d+1})\subset\overline{\mathbf{H}^{d+1}}$. $\Gamma$ is called \textit{convex cocompact} if the convex core ${\rm Core}(M):=\Gamma\backslash {\rm Hull}(\Lambda(\Gamma))\subset M$ is compact. We say $\Gamma\subset G$ is \textit{Zariski dense} if the closure of $\Gamma$ is equal to $G$ with respect to the Zariski topology of $G$ viewed as an algebraic variety over $\RR$.
In the Poincar\'e upper half space model, the limit set $\Lambda(\Gamma)\subset \mathbf{R}^d\cup \{\infty\}$ is a compact set of dimension $\delta_\Gamma \in (0, d)$ (see \cite[\S2]{sarkar2021exponential}), and we may assume that $\Lambda(\Gamma)$ is a compact subset of $\RR^d$.

We recall the following non concentration property from Sarkar--Winter \cite[Proposition 6.6]{sarkar2021exponential}.

\begin{prop}\label{prop:ncp}
Let $\Gamma\subset G$ be a convex cocompact subgroup such that $\Gamma$ is Zariski dense in $G$. Then there exists $c_0>0$ so that for any $x\in\Lambda(\Gamma)\cap\mathbf{R}^d$, $\varepsilon\in (0,1)$ and $w\in \mathbf{R}^d$ with $|w|=1$, there exists $y\in \Lambda(\Gamma)\cap B(x,\varepsilon)$ so that
\begin{align}\label{NCP formula}
    |\langle y-x,w\rangle|>c_0\varepsilon.
\end{align}
\end{prop}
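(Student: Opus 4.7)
The plan is to argue by contradiction, use convex cocompactness to renormalize to unit scale, and finally apply Zariski density to rule out the resulting codimension-$1$ concentration. Suppose the assertion fails. Then there are sequences $x_n \in \Lambda(\Gamma) \cap \RR^d$, $\varepsilon_n \in (0,1)$, and unit $w_n \in \RR^d$ with
\[
\Lambda(\Gamma) \cap B(x_n, \varepsilon_n) \subseteq \{y \in \RR^d : |\langle y - x_n, w_n \rangle| \leq \varepsilon_n / n\}.
\]

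The first step is renormalization. Because $\Gamma$ is convex cocompact and each $x_n \in \Lambda(\Gamma)$, Sullivan's shadow lemma supplies $\gamma_n \in \Gamma$ whose Poincar\'e extension has conformal distortion $|(\gamma_n^{-1})'(x_n)| \asymp \varepsilon_n^{-1}$ at $x_n$, and such that $x_n^* := \gamma_n^{-1}(x_n)$ lies in a fixed compact subset of $\RR^d$ depending only on $\Gamma$. Since $\gamma_n^{-1}\Lambda(\Gamma) = \Lambda(\Gamma)$ and conformal self-maps of $\partial_\infty\mathbf{H}^{d+1}$ send hyperplanes and spheres to hyperplanes and spheres, pulling back the slab condition yields: inside a ball of radius uniformly bounded below around $x_n^*$, the set $\Lambda(\Gamma)$ lies in an $O(1/n)$-neighborhood of a codimension-$1$ smooth hypersurface $\Sigma_n$ (either an affine hyperplane or a round sphere through $x_n^*$). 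Passing to a subsequence, $x_n^* \to x_* \in \Lambda(\Gamma)$ and $\Sigma_n \to \Sigma_*$ in the locally compact space of such hypersurfaces, which produces $\Lambda(\Gamma) \cap B(x_*, r_0) \subseteq \Sigma_*$ for some $r_0 > 0$.

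Next I would globalize this local containment. Since $\Gamma$ is non-elementary, it acts minimally on $\Lambda(\Gamma)$, so the $\Gamma$-orbit of $x_*$ is dense in $\Lambda(\Gamma)$. For each $\eta \in \Gamma$, $\eta \Sigma_*$ is again a codimension-$1$ real algebraic hypersurface and $\Lambda(\Gamma)$ is locally contained in $\eta \Sigma_*$ near $\eta x_*$. Taking closures, every point of $\Lambda(\Gamma)$ has a neighborhood inside a codimension-$1$ real algebraic set. Hence the Zariski closure $V := \overline{\Lambda(\Gamma)}^{\mathrm{Zar}} \subseteq \Sph^d = \partial_\infty \mathbf{H}^{d+1}$ is a proper $\Gamma$-invariant real algebraic subvariety. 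Its stabilizer $\mathrm{Stab}_G(V)$ is a real algebraic subgroup of $G$ containing $\Gamma$, so it equals all of $G$ by Zariski density. But $G$ acts transitively on $\Sph^d$, so the only $G$-invariant subvarieties are $\emptyset$ and $\Sph^d$, contradicting $\emptyset \neq V \subsetneq \Sph^d$.

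I expect the main technical obstacle to be the renormalization step: producing the $\gamma_n$ with the stated uniform distortion and compactness properties, and checking that the thin neighborhoods of the hypersurfaces $\Sigma_n$ converge (rather than collapsing or escaping) when passing to a subsequence. This requires uniform geometric control at infinity, which is afforded by convex cocompactness via the shadow lemma. A more intrinsic route, likely closer to Sarkar--Winter's original argument, would work on the frame bundle $\Gamma \backslash G$ and extract a $\Gamma$-invariant measurable codimension-$1$ structure directly from the failure of non-concentration; Zariski density then rules out any such invariant structure in exactly the same fashion.
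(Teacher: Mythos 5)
The paper itself gives no proof of this proposition: it is quoted from Sarkar--Winter \cite[Proposition 6.6]{sarkar2021exponential}, so there is no in-paper argument to compare against. Your reconstruction — contradiction, renormalization of the offending slabs to unit scale using compactness of the convex core and bounded distortion of boundary maps, extraction of a subsequential limit giving $\Lambda(\Gamma)\cap B(x_*,r_0)\subseteq \Sigma_*$ for a generalized sphere (or hyperplane) $\Sigma_*$, and a final contradiction with Zariski density — is the standard route to such non-concentration statements and is very much in the spirit of the cited proof. The renormalization step is fine in outline: the needed group elements and distortion bounds do come from convex cocompactness, and the images of the hyperplanes are spheres with radii bounded below, so the limit $\Sigma_*$ exists and is a codimension-one sphere or hyperplane.

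The genuine weak point is the globalization paragraph, where two inferences do not follow as written. First, density of the orbit $\Gamma x_*$ does not give that \emph{every} point of $\Lambda(\Gamma)$ has a neighborhood inside a sphere: the neighborhoods $\eta(B(x_*,r_0))$, $\eta\in\Gamma$, shrink, so a point of $\Lambda(\Gamma)$ close to $\eta x_*$ need not lie in the corresponding neighborhood. The correct use of minimality is to observe that the set $U$ of points of $\Lambda(\Gamma)$ admitting such a local spherical containment is open in $\Lambda(\Gamma)$, $\Gamma$-invariant and nonempty, so its (closed, invariant, proper) complement is empty. Second, and more seriously, ``hence the Zariski closure of $\Lambda(\Gamma)$ is proper'' is a non sequitur: local containment in \emph{varying} hypersurfaces does not by itself constrain the global Zariski closure (the hypersurfaces could a priori be infinitely many distinct spheres whose union is Zariski dense). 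The missing line is compactness: $\Lambda(\Gamma)$ is compact, so finitely many of these neighborhoods cover it, hence $\Lambda(\Gamma)$ lies in a \emph{finite} union of codimension-one spheres/hyperplanes, which is a proper real algebraic subvariety; its Zariski closure $V$ is then nonempty, proper, and $\Gamma$-invariant (as $\Gamma$ acts by algebraic automorphisms and $V$ is the minimal Zariski-closed set containing $\Lambda(\Gamma)$). With that, your endgame is correct: $\mathrm{Stab}_G(V)$ is a Zariski-closed subgroup containing $\Gamma$, hence equals $G$, contradicting transitivity of $G$ on $\Sph^d$. (An alternative repair of the same step is to use the north--south dynamics of a loxodromic element with attracting fixed point in $B(x_*,r_0)$ to upgrade the local containment to containment of all of $\Lambda(\Gamma)$ in a single generalized sphere preserved by $\Gamma$.) These gaps are repairable, but as stated the local-to-global passage is where your argument breaks.
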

As a corollary we have
\begin{corollary}\label{c:nonortho_hyper}
Let $M$ be a convex cocompact hyperbolic $d+1$-fold such that $\Gamma$ is Zariski dense in $G$. Then for any $\Phi\in C^3(\RR^d\times \RR^d;\RR)$ such that $\partial_{xy}^2 \Phi(x,y)$ is nonvanishing, the pair $(\Lambda(\Gamma),\Lambda(\Gamma))$ is $\Phi$-non-orthogonal with some constant $c_N>0$ from scales $0$ to $1$.
\end{corollary}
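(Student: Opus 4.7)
The plan is to combine the non-concentration property (Proposition \ref{prop:ncp}) with the geometric mean value theorem (\ref{geometric MVT}). Since $\Lambda(\Gamma) \times \Lambda(\Gamma)$ is compact and $\partial^2_{xy}\Phi$ is a continuous nowhere-vanishing tensor field on it, there is a uniform lower bound
\[
    c_* := \inf_{(x, y) \in \Lambda(\Gamma)^2} \|\partial^2_{xy}\Phi(x, y)\|_{\mathrm{op}} > 0.
\]
At each scale $(r_X, r_Y) \in (0, 1)^2$ and pair $(x_0, y_0) \in \Lambda(\Gamma)^2$, I will construct points $x_1, \ldots, x_d \in \Lambda(\Gamma) \cap B(x_0, r_X)$ and $y_1, \ldots, y_d \in \Lambda(\Gamma) \cap B(y_0, r_Y)$ so that the scalar $\langle x_i - x_0, M(y_j - y_0)\rangle$, with $M := \partial^2_{xy}\Phi(x_0, y_0)$, has magnitude $\gtrsim r_X r_Y$ for at least one pair $(i, j)$; I then appeal to (\ref{geometric MVT}) to upgrade this into the desired lower bound on the fourth difference of $\Phi$. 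A small parameter $\delta > 0$ chosen below absorbs the Taylor error. The construction takes place in sub-balls of radii $\rho_X := \min(r_X, \delta)$ and $\rho_Y := \min(r_Y, \delta)$; the loss $\rho_X \rho_Y / (r_X r_Y) \geq \delta^2$ (valid whenever $r_X, r_Y \leq 1$) only weakens the final constant.

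The first step is to iterate Proposition \ref{prop:ncp} in a Gram--Schmidt style to produce a well-conditioned basis. Set $V_0 := \{0\}$; at stage $i$, pick any unit vector $w_i$ orthogonal to $V_{i-1} := \mathrm{span}(x_1 - x_0, \ldots, x_{i-1} - x_0)$ and apply (\ref{NCP formula}) to obtain $x_i \in \Lambda(\Gamma) \cap B(x_0, \rho_X)$ with $|\langle x_i - x_0, w_i\rangle| > c_0 \rho_X$, so that $x_i - x_0 \notin V_{i-1}$. After $d$ stages, extending $w_1, \ldots, w_d$ to an orthonormal basis and setting $W := [w_1 \mid \cdots \mid w_d]$, the matrix $W^T U$ with $U := [x_1 - x_0 \mid \cdots \mid x_d - x_0]$ is upper triangular (since $w_i \perp x_j - x_0$ for $j < i$) with diagonal entries of magnitude $\geq c_0 \rho_X$, whence $|\det U| = |\det(W^T U)| \geq (c_0 \rho_X)^d$. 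The analogous construction produces $Z := [y_1 - y_0 \mid \cdots \mid y_d - y_0]$ with $|\det Z| \geq (c_0 \rho_Y)^d$.

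The key linear algebraic step is to lower bound an entry of the matrix $A := U^T M Z$, whose $(i, j)$-entry is precisely $\langle x_i - x_0, M(y_j - y_0)\rangle$. The AM--GM bound for singular values, combined with the column-length estimate $\|U\|_{\mathrm{op}} \leq \sqrt{d}\, \rho_X$, gives $\sigma_{\min}(U) \geq c_0^d \rho_X / d^{(d-1)/2}$, and likewise for $Z$. Consequently
\[
    \|A\|_{\mathrm{op}} \geq \sigma_{\min}(U) \, \|M\|_{\mathrm{op}} \, \sigma_{\min}(Z) \geq c_1 \rho_X \rho_Y, \qquad c_1 := c_* c_0^{2d}/d^{d-1},
\]
and since $\|A\|_{\mathrm{op}} \leq d \cdot \max_{i,j} |A_{ij}|$, some pair $(i, j)$ achieves $|A_{ij}| \geq (c_1/d)\rho_X \rho_Y$. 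This is where the nonvanishing of $\partial^2_{xy}\Phi$ is essential: without $\|M\|_{\mathrm{op}} > 0$, the bilinear form would decouple the $x$- and $y$-directions entirely, and the non-concentration in each variable alone would not force any interaction.

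Finally, by (\ref{geometric MVT}) together with the $C^1$ estimate on the variation of $\partial^2_{xy}\Phi$ across the rectangle of corners $(x_0, x_i, y_0, y_j)$,
\[
    \bigl|\Phi(x_i, y_j) - \Phi(x_0, y_j) - \Phi(x_i, y_0) + \Phi(x_0, y_0) - A_{ij}\bigr| \leq \|\partial^2_{xy}\Phi\|_{C^1} (\rho_X + \rho_Y) \rho_X \rho_Y.
\]
Choosing $\delta := c_1/(4 d \|\partial^2_{xy}\Phi\|_{C^1})$ ensures the error is at most half of $|A_{ij}|$, so the fourth difference of $\Phi$ is bounded below by $(c_1/(2d))\rho_X \rho_Y \geq (c_1 \delta^2/(2d)) r_X r_Y$, yielding $c_N := c_1 \delta^2/(2d)$. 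The main obstacle is precisely the linear algebra coupling step: the NCP controls only projections onto a single direction at a time, which would not be enough if $M$ were allowed to be degenerate (e.g.\ rank one); it is essential to build out full $d$-dimensional well-conditioned bases in both $x$ and $y$ and compare $U^T M Z$ with $M$ via singular value estimates.
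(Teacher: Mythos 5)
Your proposal is correct, and its skeleton (non-concentration plus linearization of the fourth difference of $\Phi$ at $(x_0,y_0)$) is the same as the paper's, but the middle step is carried out quite differently. The paper also replaces the fourth difference by $\langle \partial^2_{xy}\Phi(x_0,y_0)(x_1-x_0),\,y_1-y_0\rangle$ up to an error $O(\|\Phi\|_{C^3}\,r_Xr_Y(r_X+r_Y))$ and then restricts to small radii (your $\rho_X=\min(r_X,\delta)$ device is exactly the same rescaling, and produces the same kind of $\delta^2$ loss in $c_N$); however, it makes the bilinear term large with only \emph{two} applications of Proposition \ref{prop:ncp}: once in $x$ with $w$ a unit normal to $\ker\partial^2_{xy}\Phi(x_0,y_0)$, giving $|\partial^2_{xy}\Phi(x_0,y_0)(x_1-x_0)|>c_1c_0r_X$, and once in $y$ along the direction of $\partial^2_{xy}\Phi(x_0,y_0)(x_1-x_0)$. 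Your route applies the NCP $d$ times in each variable to build frames $U,Z$ with $|\det U|\gtrsim(c_0\rho_X)^d$, and then uses $\|U^TMZ\|_{\mathrm{op}}\geq\sigma_{\min}(U)\|M\|_{\mathrm{op}}\sigma_{\min}(Z)$ together with pigeonhole on the entries; this is longer and costs a constant polynomial in $d$ times $c_0^{2d-2}$, but it buys genuine robustness: you only need the compactness bound $c_*=\inf\|\partial^2_{xy}\Phi\|_{\mathrm{op}}>0$, whereas the paper's unexplained constant $c_1$ implicitly involves the least \emph{nonzero} singular value of $\partial^2_{xy}\Phi(x_0,y_0)$, which is not a continuous function of the matrix, so its uniformity over $\Lambda(\Gamma)^2$ is not actually justified there (the clean two-application fix is to take $v$ a top right singular vector of $M$, which is precisely the $d=1$-frame special case of your argument). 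One small repair to your write-up: the vectors $w_i$ from your iteration need not be orthonormal (orthogonality to $V_{i-1}$ does not give orthogonality to $w_1,\dots,w_{i-1}$), so the asserted equality $|\det U|=|\det(W^TU)|$ can fail; but Hadamard's inequality gives $|\det W|\leq 1$, hence $|\det U|\geq|\det(W^TU)|\geq(c_0\rho_X)^d$, which is all your argument uses.
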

\begin{proof}
By the mean value theorem, for $x_1, x_2\in B(x_0,r_X)$, $y_1, y_2\in B(y_0,r_Y)$, 
\begin{align*}
    &|\Phi(x_0, y_0) - \Phi(x_1, y_0) - \Phi(x_0, y_1) + \Phi(x_1, y_1)-\langle\partial_{xy}\Phi(x_0,y_0)(x_1-x_0), y_1-y_0\rangle|\\
    &\leq \|\Phi\|_{C^3}r_Xr_Y(r_X+r_Y).
\end{align*}
Let $H=\ker(\partial_{xy}^2\Phi(x_0,y_0))$ and $v$ be a unit normal vector to $H$ (if $H=\{0\}$, the we choose $v$ arbitrarily). By Proposition \ref{prop:ncp}, there exists $x_1\in \Lambda(\Gamma)\cap B(x_0,r_X)$ such that $|\langle x_1-x_0, v\rangle|>c_0r_X$. This would imply for some $c_1\in (0,1)$,
\begin{align*}
    |\partial_{xy}^2\Phi(x_0,y_0)(x_1-x_0)|>c_1c_0r_X.
\end{align*}
By Proposition \ref{prop:ncp} again, there exists $y_1\in \Lambda(\Gamma)\cap B(y_0, r_Y)$ such that
\begin{align*}
    |\langle\partial_{xy}^2\Phi(x_0,y_0)(x_1-x_0), y_1-y_0\rangle|>c_1c_0^2 r_X r_Y.
\end{align*}
Thus we may choose $r_X, r_Y\leq c_1c_0^2\|\Phi\|_{C^3}^{-1}/10$ so that
\begin{align*}
     |\Phi(x_0, y_0) - \Phi(x_1, y_0) - \Phi(x_0, y_1) + \Phi(x_1, y_1)|>\frac{c_1c_0^2}{2}r_Xr_Y,
\end{align*}
i.e. nonorthogonality holds with $c_N=\frac{c_1^3 c_0^6}{200 (1+\|\Phi\|_{C^3})^2}>0$.
\end{proof}

Theorem \ref{main theorem} and Lemma \ref{regularity at scale h} then implies $B_\chi(h):L^2(S^d)\to L^2(S^d)$ defined by
\begin{align*}
    B_\chi(h) u(x) = (2\pi h)^{-d/2} \int_{S^d}|x-y|^{2i/h}\chi(x,y) u(y)dy
\end{align*}
where $\chi(x,y)\in C_0^\infty(S^d\times S^d\setminus\{(x,x):x\in S^d\})$ satisfies the fractal uncertainty bound
\begin{align*}
    \|\mathbbm{1}_{\Lambda(\Gamma)(h)}B_\chi(h)\mathbbm{1}_{\Lambda(\Gamma)(h)}\|_{L^2(S^d)\to L^2(S^d)}\leq Ch^{\frac{d}{2}-\delta_\Gamma+\varepsilon_0}.
\end{align*}
By a covering argument as in \cite[Proposition 4.2]{Bourgain_2018}, we have for $\rho\in (0,1)$,
\begin{align*}
    \|\mathbbm{1}_{\Lambda(\Gamma)(h^\rho)}B_\chi(h)\mathbbm{1}_{\Lambda(\Gamma)(h^\rho)}\|_{L^2(S^d)\to L^2(S^d)}\leq Ch^{\frac{d}{2}-\delta_\Gamma+\varepsilon_0-2(1-\rho)}.
\end{align*}
Thus, $\Lambda(\Gamma)$ satisfies the fractal uncertainty principle with exponent $\beta=\frac{d}{2}-\delta_\Gamma+\varepsilon_0$ in the sense of \cite[Definition 1.1]{Dyatlov_2016}. Applying \cite[Theorem 3]{Dyatlov_2016}, we conclude the Laplacian on $M$ has only finitely many resonances in $\{\Im\lambda> \delta_\Gamma-\frac{d}{2}-\varepsilon_0+\varepsilon\}$ for any $\varepsilon>0$, proving Theorem \ref{scattering theory}.
 
\subsection{Computation of nonorthogonality constants}\label{how to compute}
The condition that $\Gamma\subset G$ being Zariski dense is qualitative, and so one needs to extract quantitative conditions, such as nonconcentration, from Zariski denseness by a compactness argument as in \cite{sarkar2021exponential}.
However, Qiuyu Ren has pointed out to us that for classical Schottky groups $\Gamma$ in $SO(3, 1)_0 = PSL(2, \CC)$, there is a simple and effective way to compute the nonorthogonality constant in Definition \ref{def:nonorthogonality}.
The key idea is to use the fact that M\"obius transformations are conformal maps and preserve circles in order to derive \eqref{NCP formula}.

We illustrate this by considering Schottky groups of genus $2$.
Let $D_1,D_2,D_3,D_4$ be four disjoint closed disks in $\mathbf{CP}^1=\partial \mathbf{H}^3$, let $\gamma_1,\gamma_2\in PSL(2,\CC)$ such that
\begin{align*}
    \gamma_1(\overline{D_3^c})=D_1,\quad \gamma_2(\overline{D_4^c})=D_2,\quad\gamma_3=\gamma_1^{-1},\quad\gamma_4=\gamma_2^{-1}.
\end{align*}
Let $\Gamma=\langle \gamma_1,\gamma_2\rangle$ be the free group generated by $\gamma_1$ and $\gamma_2$.
Thus, $\Gamma$ is a Schottky group of genus $2$.

Given vectors $v, w \in \RR^2$, let $\angle (v, w)$ denote the angle between $v, w$. (We identify $\mathbf{CP}^1 \setminus \{\infty\}$ with $\RR^2$, and we may assume that the $D_i$ do not contain $\infty$.)
We will choose the disks $D_1,D_2,D_3,D_4$ such that
 \begin{align}\label{condition:disk}
     \text{No circle (line) passes though all the four disks}.
 \end{align}
The circle taken here is not necessarily a great circle.

Let $\bar{a}\equiv a+2\mod 4$ for $a\in\Acal=\{1,2,3,4\}$, so that $\bar{1}=3$, $\bar{2}=4$. The limit set $\Lambda(\Gamma)$ is given by the Cantor-like procedure
\begin{align*}
    \Lambda(\Gamma)=\bigcap\limits_{n=1}^\infty\bigsqcup\limits_{\mathbf{a}\in \mathcal{W}^n} D_{\mathbf{a}},\quad \mathcal{W}^n=\{a_1a_2\cdots a_n\in\Acal^n:\overline{a_i}\neq a_{i+1}\},\quad D_{\mathbf a}=\gamma_{a_1}(\gamma_{a_2}\cdots(\gamma_{a_{n-1}}(D_{a_n}))).
\end{align*}

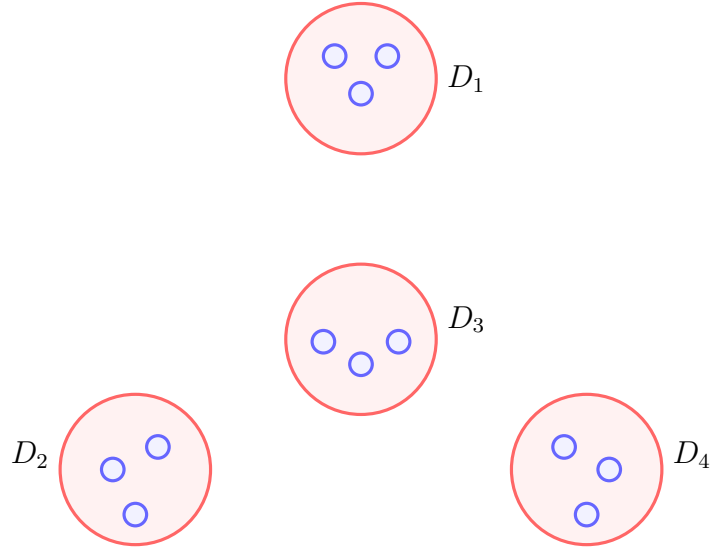
\begin{figure}
    \centering
\begin{tikzpicture}
\filldraw[color=red!60, fill=red!5, very thick](0,5.2) circle (1);
\node at (1.4,5.2) {$D_1$};

\filldraw[color=red!60, fill=red!5, very thick](-3,0) circle (1);
\node at (1.4,2) {$D_3$};

\filldraw[color=red!60, fill=red!5, very thick](3,0) circle (1);
\node at (-4.4,0.2) {$D_2$};

\filldraw[color=red!60, fill=red!5, very thick](0,1.732) circle (1);
\node at (4.4,0.2) {$D_4$};

\filldraw[color=blue!60, fill=blue!5, very thick](0,1.4) circle (0.15);
\filldraw[color=blue!60, fill=blue!5, very thick](0.5,1.7) circle (0.15);
\filldraw[color=blue!60, fill=blue!5, very thick](-0.5,1.7) circle (0.15);

\filldraw[color=blue!60, fill=blue!5, very thick](0,5) circle (0.15);
\filldraw[color=blue!60, fill=blue!5, very thick](0.35,5.5) circle (0.15);
\filldraw[color=blue!60, fill=blue!5, very thick](-0.35,5.5) circle (0.15);

\filldraw[color=blue!60, fill=blue!5, very thick](-3.3,0) circle (0.15);
\filldraw[color=blue!60, fill=blue!5, very thick](-2.7,0.3) circle (0.15);
\filldraw[color=blue!60, fill=blue!5, very thick](-3,-0.6) circle (0.15);

\filldraw[color=blue!60, fill=blue!5, very thick](3.3,0) circle (0.15);
\filldraw[color=blue!60, fill=blue!5, very thick](2.7,0.3) circle (0.15);
\filldraw[color=blue!60, fill=blue!5, very thick](3,-0.6) circle (0.15);

\end{tikzpicture}
\caption{Iteration of disks under a Schottky group}
\label{schottky}
\end{figure}
The nonorthogonality condition \eqref{nonorthogonality formula} follows from the nonconcentration property \eqref{NCP formula}. Thus it suffices to find absolute constants $0 < c_1 < 1$ and $\kappa = \kappa(\Gamma) > 0$ such that for each $x \in \Lambda(\Gamma)$, $\epsilon > 0$, and unit vector $w \in \mathbb{R}^2$, an element $y \in \Lambda(\Gamma)\cap B(x,\epsilon)\setminus B(x, c_1\epsilon)$ such that
$$|\cos \angle (x - y, w)| \geq \kappa.$$
Suppose $x\in D_{\mathbf{a}}=D_{\mathbf{a}_0 b}$ and $B(x,\epsilon)$ is roughly of the size of $D_{\mathbf{a}_0}$. Then there are two other disks in $D_{\mathbf{a}_0}$, which we call $D_{\mathbf{a}_0c}$ and $D_{\mathbf{a}_0d}$. By condition \eqref{condition:disk} and conformal invariance of the action of $\Gamma$, we know that for any $y_c\in D_{\mathbf{a}_0c}\cap \Lambda(\Gamma)$ and $y_d\in D_{\mathbf{a}_0d}\cap\Lambda(\Gamma)$, 
\begin{align}\label{insidecircle}
    \text{ the circle passing through } x, y_c, y_d \text{ lies inside } D_{\mathbf{a}_0}.
\end{align}
A M\"obius transformation preserving the unit disk is a composition of rotation and the map
\begin{align*}
    z\mapsto \frac{a-z}{1-\bar{a}z}
\end{align*}
A simple computation shows the angles of the triangle $\Delta(x,y_c,y_d)$ are uniformly lower bounded under conformal maps preserving $D_{\mathbf{a}_0}$ if we assume \eqref{insidecircle}. This implies that
\begin{align*}
    \theta<\angle (y_c-x,y_d-x)<\pi-\theta
\end{align*}
for some constant $\theta$ depending on the initial angles between $\gamma_a(D_b),a\neq \bar{b}$.
Thus, by the pigeonhole principle,
$$\max(|\cos \angle (y_c - x, w)|, |\cos \angle(y_d - x, w)|) \geq \cos\left(\frac{\pi - \theta}{2}\right).$$

If we assume moreover
\begin{equation}\label{condition:disk2}
    \begin{aligned}
    \text{For any } b \neq \bar{a} \neq c, \text{ there exists } a'\neq a, b'\neq \overline{a'} \text{ such that }\\
    \text{no 
    circle passes through } \gamma_a(D_b),\gamma_a(D_c),\gamma_{a'}(D_{b'}) \text{ and } D_{\bar{a}}
\end{aligned}
\end{equation}
(which can be achieved if we choose the disks $D_a$ to be small and with generic centers), then we can derive a lower bound on $c_1$ in  a similar way. To be more precise, let $x\in D_{\mathbf{a}}=D_{\mathbf{a}_0b}=S_{\mathbf{a}_1ab}$ as before, then by assumption \ref{condition:disk2}, there exists $a'\neq a$ and $b'\neq \overline{a'}$ such that \begin{align}
    \text{ The circle passing through }D_{\mathbf{a}_0b}, D_{\mathbf{a}_0c} \text { and } D_{\mathbf{a}_1a'b'} \text{ lies inside } D_{\mathbf{a}_1}.
\end{align}
In particular, for any $y_{a'b'}\in D_{\mathbf{a_1}a'b'} $, the angles of the triangle $\Delta(x,y_c,y_{a'b'})$ are lower bounded. This in particular implies that the length of $\overline{xy_c}$ is comparable to the length of $\overline{y_cy_{a'b'}}$, which by the previous step is comparable with the size of $D_{\mathbf{a}_0}$. This allows us to compute a lower bound of $c_1$.

If one runs this procedure carefully, then it would be possible to compute an explicit nonorthogonality constant in terms of the angles between the disks $\gamma_a(D_b)$ in the initial step and the uniform constants in doing conformal transformations.

We do not bother to do the computation here, but we include Figure \ref{schottky} to indicate how the procedure works. Conformal invariance ensures us that the small blue disks always have an angle that lies in $[\theta, \pi - \theta]$.

While one needs to compute the above parameters $\kappa, \theta$ for any given Zariski dense classical Schottky group $\Gamma$, we claim that this is always possible in principle, at least after passing to a finer scale.
We say that a pair of words $\mathbf a, \mathbf b \in \mathcal W^n$, $n \in \NN \cup \{+\infty\}$, is $\varepsilon$-\dfn{separated} if their weighted Hamming distance satisfies
$$\sum_{i = 1}^n \frac{1_{a_i \neq b_i}}{2^i} \geq \varepsilon.$$

\begin{lemma}\label{lem:zariski}
Let $\Gamma$ be a classical Schottky group which is Zariski dense in $PSL(2, \CC)$.
For every $\varepsilon > 0$ there exists $N \in \NN$ such that for every $n \geq N$ and every triple of words $\mathbf a^n, \mathbf b^n, \mathbf c^n \in \mathcal W^n$ which are pairwise $\varepsilon$-separated, there exists $\mathbf d^n \in \mathcal W^n$ such that for every circle $X$ which meets all three disks $D_{\mathbf a^n}, D_{\mathbf b^n}, D_{\mathbf c^n}$, $X$ does not meet $D_{\mathbf d^n}$.
\end{lemma}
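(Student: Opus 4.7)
The plan is to argue by contradiction, using a compactness argument in the space of admissible infinite words together with compactness in the space of generalized circles in $\mathbf{CP}^1$. Suppose the lemma fails: there exist $\varepsilon > 0$, a sequence $n_k \to \infty$, and triples $\mathbf a_k, \mathbf b_k, \mathbf c_k \in \mathcal W^{n_k}$ which are pairwise $\varepsilon$-separated, such that for every $\mathbf d \in \mathcal W^{n_k}$ there exists a generalized circle (circle or line) $X_{k, \mathbf d} \subset \mathbf{CP}^1$ meeting all four disks $D_{\mathbf a_k}, D_{\mathbf b_k}, D_{\mathbf c_k}, D_{\mathbf d}$.

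By compactness of $\Acal^{\NN}$ and a diagonal argument, one passes to a subsequence so that $\mathbf a_k, \mathbf b_k, \mathbf c_k$ stabilize on every fixed initial segment; equivalently, there exist admissible infinite words $\mathbf a^\infty, \mathbf b^\infty, \mathbf c^\infty$ in $\mathcal W^\infty := \{(a_i) \in \Acal^{\NN} : \overline{a_i} \neq a_{i+1}\}$ of which $\mathbf a_k, \mathbf b_k, \mathbf c_k$ are respective prefixes. The $\varepsilon$-separation forces these three infinite words to disagree pairwise at some index $\leq \lceil \log_2(2/\varepsilon) \rceil$, and since the disks at that level are disjoint, the associated limit points $x, y, z \in \Lambda(\Gamma)$ are distinct.

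Fix now an arbitrary $\mathbf d^\infty \in \mathcal W^\infty$ with limit point $w \in \Lambda(\Gamma)$, and let $\mathbf d_k \in \mathcal W^{n_k}$ be its length-$n_k$ truncation. By the contradiction hypothesis, a generalized circle $X_k := X_{k, \mathbf d_k}$ meets $D_{\mathbf a_k}, D_{\mathbf b_k}, D_{\mathbf c_k}, D_{\mathbf d_k}$. The space of generalized circles in $\mathbf{CP}^1$ is compact (it identifies with a closed submanifold of the Grassmannian of $2$-planes in the Minkowski space $\RR^{3,1}$), so after a further subsequence $X_k \to X_\infty$. The four disk diameters tend to $0$, forcing $X_\infty$ to contain each of $x, y, z, w$; but $x, y, z$ determine $X_\infty$ uniquely, so $w \in X_\infty$. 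As $\mathbf d^\infty$ was arbitrary, $\Lambda(\Gamma) \subseteq X_\infty$.

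The last step derives a contradiction from $\Lambda(\Gamma) \subseteq X_\infty$. Since $\Lambda(\Gamma)$ is infinite and $\Gamma$-invariant, for any $g \in \Gamma$ the generalized circle $g(X_\infty)$ shares infinitely many points with $X_\infty$; as two distinct generalized circles meet in at most two points, $g(X_\infty) = X_\infty$. Hence $\Gamma$ lies in the stabilizer of $X_\infty$, which is a proper real algebraic subgroup of $PSL(2,\CC)$ (a conjugate of the isometry group of a totally geodesic hyperbolic plane in $\mathbf H^3$, isomorphic to $PGL(2,\RR)$), contradicting Zariski density. The step I expect to be the most delicate is verifying that the Hausdorff limit $X_\infty$ is a nondegenerate generalized circle rather than collapsing to a point or arc; this is ensured by the uniform lower bound on the mutual separation of $x, y, z$, which prevents the $X_k$ from degenerating in the parameter space of generalized circles.
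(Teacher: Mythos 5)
Your argument is correct, and its skeleton is the same as the paper's: argue by contradiction, extract limit words $\mathbf a^\infty,\mathbf b^\infty,\mathbf c^\infty\in\mathcal W^\infty$ by compactness, use the $\varepsilon$-separation to get three distinct limit points (and to keep the limiting circles from degenerating, exactly as the paper does via the lower bound on radii), and conclude that every point $x_{\mathbf d}$, $\mathbf d\in\mathcal W^\infty$, lies on the single circle through those three points, i.e.\ $\Lambda(\Gamma)$ is contained in a circle. Where you genuinely diverge is the endgame: the paper rules out $\Lambda(\Gamma)\subseteq X_\infty$ by invoking the nonconcentration property of Proposition \ref{prop:ncp} (Sarkar--Winter), whereas you argue directly that $\Gamma$-invariance of $\Lambda(\Gamma)$, plus the fact that two distinct generalized circles meet in at most two points, forces $\Gamma$ to stabilize $X_\infty$ and hence to lie in a conjugate of $PGL(2,\RR)$, a proper real algebraic subgroup, contradicting Zariski density. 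Your route is more self-contained (it uses Zariski density directly rather than through the quantitative nonconcentration statement), at the cost of needing the standard facts that $\Lambda(\Gamma)$ is infinite and that the stabilizer of a circle is a proper algebraic subgroup; the paper's route is shorter given that Proposition \ref{prop:ncp} is already quoted for other purposes. Two small imprecisions, neither fatal: the finite words $\mathbf a_k$ are not literally prefixes of $\mathbf a^\infty$ --- the diagonal argument only gives that every fixed initial segment eventually stabilizes, which is all you need since $D_{\mathbf a_k}\subseteq D_{a_1\cdots a_m}$ once the first $m$ letters agree; and the space of nondegenerate generalized circles is not compact (it must be compactified by point circles), so the correct statement is that any Hausdorff limit exists in the compactification and is then a genuine circle thanks to the separation of $x,y,z$ --- which is precisely the caveat you flag at the end.
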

\begin{proof}
We first prove an analogous result for the set of infinite words $\mathcal W^\infty$, and then reduce the finite case to the infinite case.
To formulate it, let $x_{\mathbf a}$ be the unique point in $\lim_n D_{a_1 \cdots a_n}$ (so $\mathbf a \mapsto x_{\mathbf a}$ is a homeomorphism $\mathcal W^\infty \to \Lambda(\Gamma)$ where $\mathcal W^\infty$ is given the product topology).

Let $\mathbf a, \mathbf b, \mathbf c \in \mathcal W^\infty$ be distinct.
Then there is a unique circle $X_{\mathbf a \mathbf b \mathbf c} \subset \mathbf{CP}^1$ passing through $x_{\mathbf a}, x_{\mathbf b}, x_{\mathbf c}$.
We claim that there exists $\mathbf d \in \mathcal W^\infty$ such that $x_{\mathbf d} \notin X_{\mathbf a \mathbf b \mathbf c}$. Otherwise $\Lambda(\Gamma)$ is contained in a circle, which contradicts Proposition \ref{prop:ncp}.

We now address the finite case.
Suppose that the lemma fails on some $\mathbf a^n, \mathbf b^n, \mathbf c^n \in \mathcal W^n$ for each $n \in \NN$ which are $\varepsilon$-separated, so for every $\mathbf d^n \in \mathcal W^n$ there exists a circle $X(\mathbf d^n)$ which meets all disks $D_{\mathbf a^n}, D_{\mathbf b^n}, D_{\mathbf c^n}, D_{\mathbf d^n}$.
Let $\mathbf a, \mathbf b, \mathbf c \in \mathcal W^\infty$ be the limits of $\mathbf a^n$, et cetra, and let $\mathbf d \in \mathcal W^\infty$ be given.
Then $\mathbf d = \lim_n \mathbf d^n$ for some sequence $\mathbf d^n \in \mathcal W^n$, and we can define $X := \lim_n X(\mathbf d^n)$ in Hausdorff distance.
Then, $x_{\mathbf a}, x_{\mathbf b}, x_{\mathbf c}, x_{\mathbf d} \in X$, and $\mathbf a, \mathbf b, \mathbf c$ are $\varepsilon$-separated, hence distinct.
Moreover, $X$ is the limit of circles in $\mathbf{CP}^1$ whose radii are bounded from below (by $\varepsilon$-separation), so $X$ is a circle, hence $X = X_{\mathbf a \mathbf b \mathbf c}$.
This contradicts the infinite case.
\end{proof}

Assuming Lemma \ref{lem:zariski}, for $D_{\mathbf{a}}=D_{a_1\cdots a_{2n}}$, we can find $\mathbf{b}, \mathbf{c} \in\mathcal{W}^{2n}$ such that any circle passing through $D_{\mathbf{a}}, D_{\mathbf{b}}$ and $D_{\mathbf{c}}$ lies in the disk $D_{a_1}$. This is because given $D_{a_1\cdots a_{2n}}$ and $\overline{D_{a_1}^c}$, we have
\begin{align*}
    \gamma_{\bar{a}_n}\cdots\gamma_{\bar{a}_2}\gamma_{\bar{a}_1}(D_{a_1\cdots a_{2n}})=D_{a_{n+1}\cdots a_{2n}},\quad \gamma_{\bar{a}_n}\cdots\gamma_{\bar{a}_2}\gamma_{\bar{a}_1}(\overline{D_{a_1}^c})=D_{\bar{a}_n\cdots\bar{a}_2\bar{a}_1}.
\end{align*}
By Lemma \ref{lem:zariski}, there exists $\mathbf{b}_0,\mathbf{c}_0\in\mathcal{W}^n$ such that no circle passes through $D_{a_{n+1}\cdots a_{2n}}$, $D_{\bar{a}_n\cdots\bar{a}_2\bar{a}_1}$, $D_{\mathbf{b}_0}$ and $D_{\mathbf{c}_0}$. Applying $\gamma_{a_1}\cdots \gamma_{a_n}$, we conclude any circle passing through
\begin{align*}
    D_{a_1\cdots a_{2n}}, D_{a_1\cdots a_n\mathbf{b}_0}, D_{a_1\cdots a_n \mathbf{c}_0}
\end{align*}
lies inside $D_{a_1}$ (there might be cancellations for the words $a_1\cdots a_n\mathbf{b}_0$ and $a_1\cdots a_n \mathbf{c}_0$ but one can always pass to a smaller disk). This allows us to compute the angle $\theta$ as before for general Zariski dense classical Schottky groups.

\printbibliography

\end{document}